\definecolor{DB}{rgb}{0.0,0.0,0.8} 
\definecolor{DG}{rgb}{0.0,0.55,0.14}
\definecolor{DR}{rgb}{0.5,0,0.07}
\newcommand{\verti}[1]{{\left\vert #1 \right\vert}}
\newcommand{\vertii}[1]{{\left\vert\kern-0.25ex\left\vert #1 \right\vert\kern-0.25ex\right\vert}}  
\newcommand{\vertiii}[1]{{\left\vert\kern-0.25ex\left\vert\kern-0.25ex\left\vert #1 \right\vert\kern-0.25ex\right\vert\kern-0.25ex\right\vert}}
\def\Lip{\operatorname{Lip}}
\def\bt{\begin{theo}}
\def\et{\end{theo}}
\def\bpr{\begin{proposition}}
\def\epr{\end{proposition}}
\def\bl{\begin{lemma}}
\def\el{\end{lemma}}
\def\bc{\begin{coro}}
\def\ec{\end{coro}}
\def\br{\begin{rema}}
\def\er{\end{rema}}
\def\bp{\begin{proof}}
\def\ep{\end{proof}}
\numberwithin{equation}{section}
\def\Jac{\operatorname{Jac}}
\def\R{{\mathbb R}}
\def\N{{\mathbb N}}
\def\so{{\mathbb S}^1}
\def\C{{\mathbb C}}
\def\Z{{\mathbb Z}}
\def\D{{\mathbb D}}
\def\L{{\mathcal L}}
\def\p{\partial}
\def\fo{\forall\,}
\def\l{\label}
\def\bes{\begin{equation*}}
\def\ees{\end{equation*}}
\def\be{\begin{equation}}
\def\ee{\end{equation}}
\def\ba{\begin{aligned}}
\def\ea{\end{aligned}}
\def\d{\displaystyle}
\def\na{\nabla}
\def\O{\Omega}
\def\ve{\varepsilon}
\def\im{\imath}
\def\va{\varphi}
\def\curl{\operatorname{curl}}
\def\dist{\operatorname{dist}}
\renewcommand{\div}{\operatorname{div}}
\def\ben{\begin{enumerate}}
\def\een{\end{enumerate}}
\theoremstyle{definition}
\newtheorem{prop}{Proposition}[section]
\newtheorem{theo}[prop]{Theorem}
\newtheorem{coro}[prop]{Corollary}
\newtheorem{lemma}[prop]{Lemma}
\theoremstyle{definition}
\theoremstyle{definition}
\newtheorem{rema}[prop]{Remark}
\theoremstyle{definition}
\def\ep{\end{proof}}
\def\bp{\begin{proof}}
\newcommand\blfootnote[1]{%
  \begingroup
  \renewcommand\thefootnote{}\footnote{#1}%
  \addtocounter{footnote}{-1}%
  \endgroup
}
\title{Towards an asymptotic analysis of the anisotropic Ginzburg-Landau model}
\author{Dmitry Golovaty, Petru Mironescu and Peter Sternberg}
\date{\today}
\begin{document}

\maketitle

{\centering\footnotesize To the memory of Ha\"\i m Brezis, a friend and a mentor\par}

\begin{abstract}
We develop a set of tools for the asymptotic analysis of minimizers of the anisotropic Ginzburg-Landau energy functional among the admissible competitors with Dirichlet boundary datum of negative degree $-D$. As a byproduct of our analysis, we prove that the energy of a minimizer is $K\ln (1/\varepsilon)+o(\ln (1/\varepsilon))$, where $K$ depends only on $D$ and the material constants that enter into the expression for the energy.
\end{abstract}

\section{Introduction}
\blfootnote{DG: Department of Mathematics, University of Akron, Akron, OH 44325-4002, USA. \url{dmitry@uakron.edu}}
\blfootnote{PM: Universite Claude Bernard Lyon 1, CNRS, Ecole Centrale de Lyon, INSA Lyon, Université Jean Monnet, ICJ UMR5208, 69622 Villeurbanne, France.  
\url{mironescu@math.univ-lyon1.fr}}
\blfootnote{PS: Department of Mathematics, Indiana University, 
Bloomington, IN 47405-7106, USA \url{sternber@iu.edu}}
\blfootnote{Keywords: Anisotropic Ginzburg-Landau, Oseen-Frank, Nematic, Vortex, Pohozaev Identity}
\blfootnote{MSC 2020 classification: 35B25, 35B40, 35J20}

Minimizing the Ginzburg-Landau energy in a 2D domain subject to Dirichlet boundary conditions has been well understood since the seminal contribution of Bethuel, Brezis, and H\'elein \cite{bbh}. In particular, there is no distinction between the analysis of minimizers for boundary datum of positive and negative degree as the two cases are related by conjugation. However, somewhat surprisingly, when the Dirichlet integral is broken into the sum of the squares of the divergence and curl with arbitrary positive weights, the distinction arises. Such a decomposition of the gradient is not merely an academic exercise as it arises in modeling of nematic liquid crystals, in particular within the context of the Oseen-Frank model for uniaxial nematics, see \cite{virga}. 

\smallskip
The case of positive degree, considered by Colbert-Kelly and Phillips in \cite{phillips}, is reducible to the standard treatment, following the ideas of \cite{bbh}. This reduction relies on existence of degree one singularities with bounded energy that are purely divergence or purely curl, and there do not exist analogous vector fields of negative degree. Other interesting cases include extreme situations of high anisotropy when the ratio of the elastic constants is vanishingly small. For example, Golovaty, Sternberg, and Venkatraman \cite{GSV} show that limiting configurations may exhibit line singularities accommodating high deformation cost associated with divergence through the emergence of jumps in the tangential component. In a related work, Kowalczyk, Lamy, and Smyrnelis \cite{kowalczyk} construct entire solutions of the Euler-Lagrange equations having negative degree and possessing equivariant symmetry. 

\smallskip
In this paper, we develop some tools that we believe should be helpful in completing the asymptotic analysis of the minimization problem for the anisotropic Ginzburg-Landau functional, subject to boundary datum of negative degree. We begin by providing the precise statement of the problem and establishing some necessary notation.

\subsection{The problem}
\l{s1}
We let
$\O\subset\R^2\sim\C$ be a smooth bounded domain that we assume to be simply connected. For technical reasons, we occasionally also assume that $\O$ is strictly star-shaped. For $K_1, K_3>0$, $\omega\subset\O$ and $u:\omega\to\C$, we consider the energies
\begin{gather*}
E_0(u)=E_0(u, \omega):=\frac{K_1}2\int_\omega (\div u)^2+\frac{K_3}2\int_\omega (\curl u)^2
\end{gather*}
and
\begin{gather*}
E_\ve(u)=E_\ve(u, \omega):=E_0(u)+\frac 1{4\ve^2}\int_\omega(1-|u|^2)^2,
\end{gather*}
where $\ve>0$. With no loss of generality, we assume that $K_1+K_3=2$. Noting that 
\bes
\frac{K_1}2 (\div u)^2+\frac{K_3}2(\curl u)^2=\frac{K_3}2 (\div \imath u)^2+\frac{K_1}2 (\curl \imath u)^2,
\ees
we may assume in the analysis below that $K_1\ge K_3$, so that we can write
\be
\l{aa1}
K_1=1+\delta,\ K_3=1-\delta,\ \text{with }0\le \delta<1.
\ee

For $u:\omega\to\C$, we denote by
\begin{gather*}
G_0(u)=G_0(u, \omega):=\frac 12\int_\omega |\na u|^2,
\end{gather*}
and
\begin{gather*}
G_\ve(u)=G_\ve(u,\omega):=G_0(u)+\frac 1{4\ve^2}\int_\omega (1-|u|^2)^2,
\end{gather*}
the standard Dirichlet and Ginzburg-Landau energies, respectively. When $\delta=0,$ the functional $E_0$ reduces to $G_0$ while the functional $E_\ve$ reduces to $G_\ve$ (after integration by parts and modulo a fixed boundary term; see the proof of Lemma \ref{af1}).
In what follows, we denote by lower case letters the energy densities, e.g.,
\begin{gather*}
e_0(u):=\frac{K_1}{2}(\div u)^2+\frac {K_3}2 (\curl u)^2,
\end{gather*}
and
\begin{gather*}
g_\ve(u):=\frac 12|\na u|^2+\frac 1{4\ve^2}(1-|u|^2)^2,
\end{gather*}
for $E_0$ and $G_\ve$, respectively.

\smallskip
Given $g:\p\O\to\so$ a smooth map of degree $-D<0$, we let $u_\ve$ denote a minimizer of $E_\ve$ in the class $H^1_g(\O; \C):=\{ u\in H^1(\O ; \C);\, \text{tr}\, u=g\}$. We are interested in the asymptotic properties of $u_\ve$ as $\ve\to 0$, and our main purpose is to extend to $E_\ve$ some of the analysis achieved for $G_\ve$ in \cite{bbh} . 

\subsection{The main results}

Although our results are far from being as complete as those in \cite{bbh}, we feel that they may have some interest and give impetus for subsequent research. Most of the techniques that we use have roots in \cite{bbh} and subsequent works. In particular, several proofs are in the spirit of Struwe \cites{struwe,struwe_2} or Sandier and Serfaty \cite{sandier_serfaty}*{Chapter 5} (see also Han and Shafrir \cite{HS95}, Jerrard \cite{Jer99}, and Sandier \cite{San98}). Part of the analysis consists of establishing {\it a priori} estimates. Such estimates are also obtained for critical points of $E_\ve$, either under energy bounds assumptions or when $\O$ is strictly star-shaped. 
%

\smallskip
A significant part of our analysis is valid for every $K_1$ and $K_3$.  For example, we prove that minimizers $u_\ve$ of $E_\ve$ satisfy, for small $\ve$,  the bounds $|u_\ve|\le C_1$, $|\na u_\ve|\le C_2/\ve$ (Lemma \ref{aq3}). In the case of the standard Ginzburg-Landau equation, this follows from a maximum principle that does not seem to be available in our case. This is derived {\it via} various Pohozaev identities (see, e.g., Lemma \ref{ak3}) and elliptic estimates (see, e.g., Lemma \ref{ah1}).

\smallskip
We establish an $\eta$-ellipticity result (Lemma \ref{ab8}) similar to the one for the standard Ginzburg-Landau equation, asserting, roughly speaking, that if the energy of a minimizer $u_\ve$ is small when compared to $\ln (1/\ve)$, then $u_\ve$ has no vortices. We also prove that critical points of $E_\ve$ satisfying a logarithmic energy bound (and, in particular, minimizers) display a controlled bad discs structure (Lemma \ref{at1}). These bad discs are far away from the boundary (Corollary \ref{aw4}). We also prove the existence of bounded entire local minimizers of negative degree (Corollary \ref{be1}).

\smallskip
Sharper results are established under the assumption that $K_1$ and $K_3$ are \enquote{close}, i.e., for sufficiently small $|\delta|$. For example, wwe prove that, when $|\delta|$ is small, the local minimizers in Corollary \ref{be1} have degree $-1$ (Corollary \ref{be9}). Moreover, when $|\delta|$ and $\ve$ are small, we prove that the bad discs structure associated with a minimizer of $E_\ve$ with respect to a boundary datum of degree $-D<0$ consists of exactly $D$ bad discs, each of degree $-1$ (Theorem \ref{bg1}).

\smallskip
When $|\delta|$ and $\ve$ are small and $0<\alpha<1$, in Section \ref{bm1} we prove that the bad discs are at distance $\ge$ $\ve^\alpha$ from each other and from the boundary (Theorem \ref{bm2}). We complement these results in Section \ref{bu1}, where we are also able to show  that the energy density concentrates on bad disks as $\ve\to0$ (Theorem \ref{bw1}).

\smallskip
Another series of results concerns the energy of minimizers of $E_\ve$ with boundary datum of degree $-D$. For arbitrary $\delta$, we introduce the concept of giant bad discs, that allows us to obtain the asymptotic expansion of this energy up to an $\d o\left(\ln (1/\ve)\right)$ term (Theorem \ref{cb2}). When $|\delta|$ is sufficiently small, we prove that the leading term in the expansion of the energy is $D C_\delta \ln (1/\ve)$. It is well-known that, for the standard Ginzburg-Landau functional investigated in \cite{bbh} and which corresponds to $\delta=0$, we have $C_0=\pi$, and the above term is the leading term for both positive $D$ (as in our work) and negative $D$. When $\delta\neq 0$ and $D$ is negative, it was proved in \cite{phillips}  that the leading order is $D (1-|\delta|)\pi \ln (1/\ve)$. We prove that, when $\delta\neq 0$ is small, the cost of negative degrees is different from the one of positive degrees. More specifically, we prove that, when $|\delta|$ is small, we have $C_\delta>(1-|\delta|)\pi$ (Lemma \ref{br0} and Theorem \ref{ca4}).

\medskip
\noindent
{\bf Acknowlegments.} D.G. acknowledges  support by an NSF grant DMS 2106551. The research of P.S. was supported by a Simons Collaboration grant 585520 and an NSF grant DMS 2106516.

\section{Preliminaries}
\l{s2}
We will repeatedly use the following observations.
\bl
\l{af1}
Let $\omega$ be a bounded Lipschitz domain and $u=v+\imath w\in H^1(\omega ; \C)$. Then 
\be
\l{af2}
\ba
E_0(u, \omega)=&\frac{K_1}2\int_\omega ([v_x]^2+[w_y]^2)+\frac{K_3}2\int_\omega ([v_y]^2+[w_x]^2)\\
&+(K_1-K_3)\int_\omega v_xw_y+K_3\int_\omega (v_xw_y-v_yw_x)
\\
=&\frac{K_1}2\int_\omega ([v_x]^2+[w_y]^2)+\frac{K_3}2\int_\omega ([v_y]^2+[w_x]^2)\\
&+(K_1-K_3)\int_\omega v_xw_y+\frac{K_3}2\int_{\p\omega} u\wedge\frac
{\p u}{\p\tau},
\ea
\ee
and
\be
\l{af3}
\ba
E_0(u, \omega)=&\frac{K_1}2\int_\omega ([v_x]^2+[w_y]^2)+\frac{K_3}2\int_\omega ([v_y]^2+[w_x]^2)\\
&+(K_1-K_3)\int_\omega v_yw_x+K_1\int_\omega (v_xw_y-v_yw_x)
\\
=&\frac{K_1}2\int_\omega ([v_x]^2+[w_y]^2)+\frac{K_3}2\int_\omega ([v_y]^2+[w_x]^2)\\
&+(K_1-K_3)\int_\omega v_yw_x+\frac{K_1}2\int_{\p\omega} u\wedge\frac
{\p u}{\p\tau}.
\ea
\ee

Moreover,
\be
\l{af4}
\ba
(1-\delta)G_0(u,\omega)-\frac{1-\delta}2\bigg|\int_{\p\omega} u\wedge\frac
{\p u}{\p\tau}\bigg|
\le & E_0(u,\omega)
\\
\le & (1+\delta) G_0(u,\omega)+\frac{1+\delta}2\bigg|\int_{\p\omega} u\wedge\frac
{\p u}{\p\tau}\bigg|.
\ea
\ee
\el
\bp
Identities \eqref{af2} and \eqref{af3} are straightforward consequences of 
\be
\l{af5}
\int_\omega (v_xw_y-v_yw_x)=\frac 12\int_{\p\omega} u\wedge\frac
{\p u}{\p\tau}.
\ee
The first and the second inequality in \eqref{af4} follow from the second identity in \eqref{af2} and \eqref{af3}, respectively, once we observe that
\bes
(K_1-K_3)\int_\omega v_xw_y\geq-\delta\int_\omega ([v_x]^2+[w_y]^2)
\ees
and
\bes(K_1-K_3)\int_\omega v_yw_x\leq\delta\int_\omega ([v_y]^2+[w_x]^2).\qedhere
\ees
\ep

%
Next, recalling that $\deg g=-D<0,$ we  prove the following lemma.
\bl
\l{aq1} 
 For small $\ve$, we have the $\delta$-independent bound
\be
\l{aq2}
\min\{ E_\ve(u);\, u\in H^1_g(\O ; \C)\}\le \pi  D \ln \frac 1 \ve+ C(g).
\ee
\el
\begin{proof}
Using  the standard construction of competitors for the Ginzburg-Landau energy, it suffices to prove the result when $\O$ is the unit disc, $D=1$, and $g(z)=\overline z$. Consider, for $0<\ve<1$,  the competitor 
\bes
u(z)=\begin{cases}
\overline z/|z|,&\text{if } |z|\ge\ve
\\
\overline z/\ve,&\text{if }|z|\le \ve
\end{cases}.
\ees

Then 
\bes
E_\ve(u)=\pi \ln \frac 1\ve+ \frac\pi 2\int_0^1 r(1-r^2)^2\, dr.\qedhere
\ees
\end{proof}

The following is straightforward.
\bl
\l{aa2} A critical point 
$u_\ve=v_\ve+\im w_\ve$ of $E_\ve$ in $H^1_g(\O ; \C)$ satisfies
\be
\l{aa3}
\begin{cases}
\L_1(v_\ve,w_\ve):=-K_1(v_{\ve,x}+w_{\ve,y})_x-K_3(v_{\ve,y}-w_{\ve,x})_y=\d\ve^{-2}v_\ve(1-|u_\ve|^2),
\\
\L_2(v_\ve,w_\ve):=-K_1(v_{\ve,x}+w_{\ve,y})_y+K_3(v_{\ve,y}-w_{\ve,x})_x=\ve^{-2}w_\ve(1-|u_\ve|^2).
\end{cases}
\ee
\el

\noindent
Here and in what follows, we use a subscript notation for partial or directional derivatives: $w_x=\d\frac {\p w}{\p x}$, $u_\tau:=\d\frac{\p u}{\p\tau}$, etc.

\smallskip
We next note that
the second order constant coefficients linear system $\L:=(\L_1,\L_2)$ is elliptic, in the sense that it satisfies the strong Legendre-Hadamard ellipticity condition (see, e.g., \cite{giaquinta}*{Chapter I, (1.9)}). To justify this observation, we note that $\L$ arises from the energy functional $E_0(u)$. Writing (only in this paragraph) 
$$u=(u^1,u^2),\ p^i=(p^i_1,p^i_2)=\na^t u^i,\ i=1,2,$$ the energy density $e_0(u)$ may be identified with the following  function of $(p^1, p^2)$:
\bes
e_0(p^1,p^2)=\frac{K_1}2 (p^1_1+p^2_2)^2+\frac{K_3}2(p^2_1-p^1_2)^2,
\ees
and thus, for every $\xi=(\xi_1, \xi_2)$ and $\lambda=(\lambda^1,\lambda^2)$, we have
\be
\l{aa30}
\ba
\sum_{1\le i, j, \alpha,\beta\le 2}\frac{\p^2e_0}{\p p^i_\alpha \p p^j_\beta}\xi_\alpha\xi_\beta\lambda^i\lambda^j=& K_1(\xi_1\lambda^1+\xi_2\lambda^2)^2+K_3(\xi_1\lambda^2-\xi_2\lambda^1)^2
\\
\ge & K_3 [(\xi_1\lambda^1+\xi_2\lambda^2)^2+(\xi_1\lambda^2-\xi_2\lambda^1)^2]=K_3 |\xi|^2|\lambda|^2,
\ea
\ee
which shows that, indeed, $\L$ is elliptic. 

\smallskip
An alternative route to ellipticity consists of identifying $u$ with the $1$-form $\zeta=v \, dx+w\, dy$, noting that 
\bes
E_0(u)=\frac{K_1}2\int_\O |d^\ast\zeta|^2+\frac{K_3}2\int_\O |d\zeta|^2,
\ees
and then using the ellipticity of the Hodge system $\begin{cases}  d\zeta=f,
\\
d^\ast \zeta=f^\ast.\end{cases}$

\smallskip
This observation allows us to apply to $\L$ the regularity theory for elliptic systems as in \cites{dn,adn}. However, since we will rely on estimates in variable domains and with variable operators, we present here the statements instrumental for our purposes, with elements of proofs.  

\smallskip
We first quantify \textcolor{red}{the uniform ellipticity of   the} operator $\L$, by introducing the assumption
\be
\l{ak1}
0\le\delta\le \delta_1<1,
\ee
where $\delta_1$ is a fixed constant.

\smallskip
We fix a smooth bounded domain $\O$ and a boundary datum $g\in C^\infty(\p\O ; \C)$. A ball $B=B_r(x)$ is {\it admissible} if either 
$B\subset \O$, or the center of $B$ is on $\p\O$. We set 
\begin{equation}
\label{eq:bst}
B_\ast:=B_{r/2}(x).
\end{equation}
Consider a solution $u$ of 
\be
\l{ah2}
\begin{cases}
\L u=f&\text{in }B\cap \O
\\
u=g&\text{on }B\cap\p\O
\end{cases}
\ee
(the last condition being empty if $B\subset\O$). Note that, for small $r$, if the ball $B$ is centered at some $x\in\p\O$, then $B\cap\O$ is a Lipchitz open set and $B\cap\p\O$ is a Lipschitz portion of $\p (B\cap\O)$. Therefore, the second condition in \eqref{ah2} makes sense provided, say, $u\in H^1(B\cap\O)$.
In what follows, we always make the implicit assumption that $r$ is sufficiently small so that these considerations apply.
Note that this smallness assumption does not depend on $\ve$ or $\delta$.

\bl
\l{ah1}
Assume \eqref{ak1}.
Let $0<\alpha<1$ and set $q=q(\alpha):=\d\frac 2{2-\alpha}\in (1, 2)$. Let $p>2$.
Let $B=B_r(x)$ be an admissible ball and consider a solution $u\in H^1(B\cap \O)$ of \eqref{ah2}.
\ben
\item (Interior estimates)  If $B\subset\O$, then (for some absolute constants $C_{\alpha,\delta_1}$ and $C_{p, \delta_1}$)
\begin{gather}
\l{ah3}
r^\alpha\frac{|u(y)-u(z)|}{|y-z|^\alpha}\le C_{\alpha,\delta_1}(\vertii{\na u}_{L^2(B)}+r^\alpha\vertii{f}_{L^q(B)}),\ \fo y, z\in B_\ast,
\\
\l{ah31}
r^\alpha\frac{|u(y)-u(z)|}{|y-z|^\alpha}\le C_{\alpha,\delta_1}(\vertii{\na u}_{L^2(B)}+r\vertii{f}_{L^2(B)}),\ \fo y, z\in B_\ast,
\\
\l{ah4}
r|\na u(y)|\le C_{p,\delta_1}(\vertii{\na u}_{L^2(B)}+r^{2-2/p}\vertii{f}_{L^p(B)}),\ \fo y\in B_\ast.
\end{gather}
\item (Boundary estimates)
There exists some finite $r_0>0$ such that, if 
 $r\le r_0$ and $x\in\p\O$, then (for some  constants $C_{\alpha, \delta_1,\O}$ and $C_{p,\delta_1,\O}$)
\begin{gather}
\l{ah4a}
\ba
r^\alpha\frac{|u(y)-u(z)|}{|y-z|^\alpha}\le C_{\alpha,\delta_1,\O}(\vertii{\na u}_{L^2(B\cap\O)}+&r^\alpha\vertii{f}_{L^q(B\cap\O)}+r|g|_{\Lip (B\cap\p\O)}),\\ &\fo y, z\in B_\ast,
\ea
\\
\l{ah4b}
\ba
r^\alpha\frac{|u(y)-u(z)|}{|y-z|^\alpha}\le C_{\alpha,\delta_1,\O}(\vertii{\na u}_{L^2(B\cap\O)}+&r\vertii{f}_{L^2(B\cap\O)}+r|g|_{\Lip (B\cap\p\O)}),\\ &\fo y, z\in B_\ast,
\ea
\\
\l{ah5}
\ba
r|\na u(y)|\le C_{p,\delta_1,\O}(\vertii{\na u}_{L^2(B\cap\O)}+&r^{2-2/p}\vertii{f}_{L^p(B\cap\O)}+r|g|_{\Lip (B\cap\p\O)}\\
+&r^2|\p g/\p\tau|_{\Lip (B\cap\p\O)}),\
\fo y\in B_\ast.
\ea
\end{gather}
\een
\el

Note the scaling (in the radius $r$) of the estimates, which comes from the fact that we work in two dimensions and that $\L$ is a homogeneous second order system.

\bp[Idea of proof of Lemma \ref{ah1}] After scaling, item {\it 1} is a special case of the interior estimates for elliptic systems \cite{dn}, \cite{giaquinta}*{Chapter 3, Theorem 2.2}, combined with the embedding $H^2_{loc}\hookrightarrow C^\alpha$, $0<\alpha<1$. Note that here the scaling argument relies on the homogeneity of $\L$. Again after scaling and (for small $r$) flattening of the boundary, item {\it 2} follows from the model case $B\cap\O=\{ (x,y)\in B_1(0);\, y>0\}$. Some care is needed since the flattening depends on $x$ and $r$, and one has to make sure that one can choose constants independent of $x$, $r$, and $\delta_1$ in the method of freezing of the coefficients. This is indeed possible for sufficiently small $r$ (see, e.g., the detailed proofs in \cite{campanato}*{Cap. III} or \cite{gt}*{proof of Theorem 9.13}).
\ep

Iterating the proof of Lemma \ref{ah1} for our specific system \eqref{aa3} and taking $r=\ve$, we obtain the following result, that we state here without proof. 
\bl
\l{aw6}
Assume \eqref{ak1}. Fix $g\in C^\infty(\p
\O ; \so)$.  Let $u=u_\ve$, $0<\ve\le 1$, be critical points of $E_\ve$ in $H^1_g(\O)$ satisfying the {\it a priori} bound 
\be
\l{aw7}
|u(x)|\le M<\infty,\ \fo \ve,\, \fo x\in\O.
\ee

Then there exist finite constants $C_k$ depending on $M$, $\delta_1$, $\O$, and $g$ such that
\be
\l{aw8}
|D^k u(x)|\le C_k \ve^{-k},\  \fo x\in\overline\O,\, \fo k\in\N.
\ee

Moreover, with finite constants $\widetilde C_k$ depending on $M$ and $\delta_1$ (but not on $\O$ or $g$), we have
\be
\l{aw8a}
|D^k u(x)|\le \widetilde C_k \ve^{-k},\  \fo x\in\O\ \text{s.t. }\dist (x, \p\O)\ge\ve,\, \fo k\in\N.
\ee
\el

Next, we note an important consequence of the ellipticity of $\L$. The system \eqref{aa3} is of the form
\be
\l{av1}
\L u=F(u),\ \text{with }F(u)=\ve^{-2} u(1-|u|)^2.
\ee

Noting that $F$ is analytic, we have the following result, essentially established by Morrey \cite{morrey} (see also Petrowsky \cite{petrowsky}).
\bl
\l{av2}
Let $U\subset\R^2$ be an open set.
If $u\in H^1_{loc}(U)$ is a weak solution of \eqref{av1}, then $u$ is analytic.
\el
\bp
Let us note that, by standard regularity theory \cite{dn}, the 2D-embedding $H^1_{loc}\hookrightarrow L^p_{loc}$, $\fo p<\infty$, and the fact that our $F$ has polynomial growth, we have $u\in C^\infty$. We next note that the Legendre-Hadamard ellipticity condition checked in \eqref{aa30} implies the ellipticity in the sense of Douglis and Nirenberg \cite{dn}*{Section 1}. This is a general fact, but we illustrate it in our special case. For a second order 2D-variational  system with energy density $e_0(p^1, p^2)$, the ellipticity in the sense of \cite{dn} requires that the following determinant
\be
\l{av3}
D(\xi):=\det \, \left(
\sum_{1\le\alpha,\beta\le 2}\frac{\p^2e_0}{\p p^i_\alpha \p p^j_\beta}\xi_\alpha\xi_\beta
\right)_{1\le i, j\le 2}
\ee
does not vanish when $\xi=(\xi_1, \xi_2)\in\R^2\setminus\{0\}$.

\smallskip
Considering the left-hand side of \eqref{aa30} as a quadratic form in $\lambda$ with $\xi$-depending coefficients, the determinant in \eqref{av3} is nothing but the determinant of this quadratic form. Thus, by \eqref{aa30}, $D(\xi)>0$, $\fo \xi\neq 0$, as claimed. (Of course, one could check \eqref{av3} directly by noting that $D(\xi)=K_1 K_3 |\xi|^2$.) 

\smallskip
Finally, the main result in Morrey \cite{morrey} asserts that smooth solutions of analytic elliptic systems are analytic,  implying the conclusion of the lemma.
\ep

\section{\texorpdfstring{$\eta$}{eta}-ellipticity}
\l{ab3}

\textit{Throughout this section, we assume    \eqref{ak1}}.
Let
 $\O$ and the boundary datum $g\in C^\infty(\p\O ; \so)$ be fixed. Let $u_\ve$ be a minimizer of $E_\ve$ in $H^1_g(\O ; \C)$.
We will establish conditional \textit{a priori} estimates on $u_\ve$, with constants depending on $\delta_1$, but not on $\delta$ satisfying \eqref{ak1}. These constants  will possibly  depend on $\O$ or $g$ and the estimates will be valid for $\ve\le \ve_0$, with $\ve_0$ possibly depending on $\O$ and $g$. 

\smallskip

The main result of this section is 
the following.
\bl
\l{ab8}
Let  $0<\alpha<1$ and $\lambda>0$ be fixed. Then there exist absolute constants $\eta>0$ and $M<\infty$ (depending only on $\delta_1$, $\alpha$, $\lambda$)  and a constant $\ve_0>0$ depending on $g$ and $\O$ such that:
\be
\l{ab4}
\ba
{}
[0<\ve\le\ve_0,\, B_{\ve^\alpha}(x)&\text{ admissible},\, E_\ve(u_\ve, B_{\ve^\alpha}(x)\cap\O)\le\eta |\ln \ve|]
\\
&\implies [||u_\ve(x)|-1|\le\lambda,\, |\na u_\ve(x)|\le M/\ve]. 
\ea
\ee
Moreover, we may choose $M$ independent of $0<\lambda<1$.
\el

We next state some intermediate results (to be proved later) that will be needed in the proof of Lemma \ref{ab8}. The first result is well-known in the Ginzburg-Landau literature.

\bl
\label{ab6}
1. 
Let $\mu>0$ be fixed. Then there exists an absolute finite positive constant $\nu$  (depending only on $\mu$) such that:
\be
\l{ab7}
\begin{aligned}
&
\left[B_r(x)\subset\O, \, 0<\ve\le r,\, f:C_{r}(x)\to\C,
 r\int_{C_{r}(x)}|f_\tau|^2+\frac{r}{\ve^2}\int_{C_{r}(x)}(1-|f|^2)^2\le \nu\right]
\\
&
\implies \left[\bigg|\int_{C_r(x)}f\wedge f_\tau\bigg|\le\mu
\ \&\ \exists \, h\in H^1_f(B_r(x))\text{ s.t. } G_\ve(h, B_r(x))\le \mu \right]. 
\end{aligned}
\ee

2. Let $\mu>0$ be fixed. Then there exists a finite positive absolute constant $\nu$  (depending only on $\mu$) 
 and  a constant $r_0$ depending on $\mu$, $\O$, and $g$, such that:
\be
\begin{aligned}
\l{ag1}
\bigg[&x\in\p\O, \, 0<\ve\le r\le r_0,\, f\in H^1(\p (B_{r}(x)\cap\O) ; \C),\, f=g\ \text{on } B_r(x)\cap\p\O,
\\
&
r\int_{C_{r}(x)\cap\O}|f_\tau|^2+\frac{r}{\ve^2}\int_{C_{r}(x)\cap\O}(1-|f|^2)^2\le \nu\bigg]\implies 
\\
\bigg[&\bigg|\int_{C_r(x)\cap\O}f\wedge f_\tau\bigg|\le\mu\ \& 
\  \exists \, h\in H^1_f(B_r(x)\cap\O)\text{ s.t. } G_\ve(h, B_r(x)\cap\O)\le \mu \bigg]. 
\end{aligned}
\ee
\el

The proof of Lemma \ref{ab6} also leads to Lemmas \ref{ac1} and \ref{ac3}, that we note, without proof, for further use. 
\bl
\l{ac1}
Let $B=B_r(x)$. Fix some $s>0$. Then there exists a finite constant $t>0$ (depending only on $s$) and a finite constant $r_1>0$ depending on $s$, $\O$,  and $g$ such that 
\be
\l{ac2}
\ba
{}
\bigg[&r\le r_1,\, B\text { admissible},\,  v\in H^1(\p(B\cap\O) ;\C), \, v=g\text{ on }B\cap\p\O,
\\
&r\int_{\p B\cap\O}|v_\tau|^2+\frac 1r\int_{\p  B\cap\O}(1-|v|^2)^2\le t \bigg]
\implies \bigg|\int_{\p B\cap\O}v\wedge v_\tau\bigg|\le s.
\ea
\ee
\el
\bl
\l{ac3}
Let $B=B_r(x)$. Fix some $t>0$. Then there exists a finite constant $s>0$ (depending only on $t$) and a finite constant $r_1>0$ depending on $t$, $\O$,  and $g$ such that 
\be
\l{ac4}
\ba
{}
\bigg[&r\le r_1,\, B\text { admissible},\,  v\in H^1(\p(B\cap\O) ;\C), \, v=g\text{ on }B\cap\p\O,
\\
 & r\int_{\p(B\cap\O)}|v_\tau|^2+\frac 1r\int_{\p(B\cap\O)}(1-|v|^2)^2\le t\bigg]
\implies \bigg|\int_{\p (B\cap\O)}v\wedge v_\tau \bigg|\le s.
\ea
\ee
\el

Note that, in Lemma \ref{ac1} we prove existence of $t,$ given $s,$ while the opposite is shown in Lemma \ref{ac3}, where $t$ is given and existence of $s$ follows.

\smallskip
 The final auxiliary result used in the proof of Lemma \ref{ab8}  relies on Lemma \ref{ah1}.
\bl
\l{ah6}
Let $B=B_r(x)$. Let $u=u_\ve$ be a minimizer of $E_\ve$ in $H^1_g(\O ; \C)$.  Let $0<s\le 1$. Then there exists some finite constant $t>0$ (depending only on $s$) and a finite constant $r_1>0$ depending on $s$,  $\O$, and $g$  such that 
\be
\l{ah7}
\ba
\bigg[&0<4\ve\le r\le r_1,\, B\text{ admissible},\, E_\ve (u, B\cap\O)\le s,\\
 &
r\int_{\p B\cap\O}|u_\tau|^2+\frac 1r\int_{\p B\cap\O}(1-|u|^2)^2\le s\bigg]
\implies  |1-|u(z)||\le t,\, \fo z\in B_\ast.
\ea
\ee
Moreover, we may choose $t=t(s)$ such that $
\lim_{s\to 0}t(s)=0$.
\el

\noindent
(Recall that $B_\ast:=B_{r/2}(x)$.)

\smallskip
We now return to the proof of Lemma \ref{ab8}. In what follows,  $C_j$ is a generic constant independent of  $\ve$ or the center of the ball.

\bp[Proof of Lemma \ref{ab8}, using Lemmas \ref{af1}, \ref{ab6}--\ref{ah6}]
Fix some constant $\alpha_1$ such that $0<\alpha<\alpha_1<1$. We distinguish the cases $B_{\ve^{\alpha_1}}(x)\subset \O$, respectively $B_{\ve^{\alpha_1}}(x)\not\subset \O$. In what follows, $\ve$ is sufficiently small and not fixed, while $\eta$ and $s>0$ are constants that we will select at the end of the proof.

\smallskip
\noindent
{\it Case 1. $B_{\ve^{\alpha_1}}(x)\subset \O$.}
 Clearly, we have $E_\ve(u_\ve,\O)\le C_1|\ln\ve|$ and thus, by Lemma \ref{af1} applied with $\omega=\O$, we have 
 \be
 \l{ai1}
 G_\ve(u_\ve,\O)\le C_2|\ln\ve|. 
 \ee
Fix $\alpha_1<\beta<\gamma<1$. By \eqref{ai1} and the mean value theorem, there exists some $\ve^\beta<r_1<\ve^{\alpha_1}$ such that
 \be
 \l{ad3}
 r_1\int_{C_{r_1}(x)}|u_{\ve,\tau}|^2+\frac{r_1}{\ve^2}\int_{C_{r_1}(x)}(1-|u_\ve|^2)^2\le C_3.
 \ee
 By Lemmas \ref{af1} and \ref{ac3}, this implies, for sufficiently small $\ve$, 
 \be
 \l{ad4}
 \ba
 G_\ve(u_\ve, B_{r_1}(x))\le &\frac 1{1-\delta_1}E_\ve(u_\ve, B_{r_1}(x))+C_4 \le \frac 1{1-\delta_1}E_\ve(u_\ve, B_{\ve^\alpha}(x))+C_4
 \\
 \le & C_5\eta |\ln\ve|.
 \ea
 \ee
Note the important fact that, while $C_1, C_2, C_3, C_4$ depend on $g$, $C_5$ and the constant $C_6$ below are universal constants, depending only on $\delta_1$, $\alpha$, $\alpha_1$, $\beta$, $\gamma$.
By \eqref{ad4} and the mean value theorem, there exists some $\ve^\gamma<r_2<\ve^\beta$ such that 
 \be
 \l{ad5}
 r_2\int_{C_{r_2}(x)}|u_{\ve,\tau}|^2+\frac{r_2}{\ve^2}\int_{C_{r_2}(x)}(1-|u_\ve|^2)^2\le C_6\eta.
 \ee
 
 By \eqref{ad5}, and Lemmas \ref{ac1} and \ref{ab6}, for sufficiently small $\eta$ (depending on $s$) we have (with $h$ the competitor given by Lemma \ref{ab6})
 \be
 \l{ae1}
 E_\ve(u_\ve, B_{r_2}(x))\le E_\ve(h,B_{r_2}(x))\le C_7 s. 
 \ee

 The first conclusion in \eqref{ab4} follows from \eqref{ad5}, 
 \eqref{ae1} (with sufficiently small $s$), and Lemma \ref{ah6}.
 
 \smallskip
 The second part of \eqref{ab4} follows from the first part of \eqref{ab4} and estimate \eqref{ah4} in Lemma \ref{ah1} item {\it 1} (applied with $r=\ve$).
 
 \smallskip
 \noindent
 {\it Case 2.  $B_{\ve^{\alpha_1}}(x)\not\subset \O$.} The idea is similar, but this time we rely on estimate \eqref{ah5} in  Lemma \ref{ab6} item {\it 2}. Let $\alpha<\alpha_2<\alpha_1$. Let $y$ be the nearest point projection of $x$ on $\p\O$. Clearly, for small $\ve$, the admissible ball $B_{\ve^{\alpha_2}}(y)$ is contained in $B_{\ve^\alpha}(x)$ and contains $B_\ve(x)$. We proceed as in the proof of \eqref{ae1} and find that $E_\ve(u_\ve, B_{\ve^{\alpha_2}}(y)\cap\O)\le C_7s$, which is the   analogue of \eqref{ae1} adapted to Case 2.  We conclude as above.  
 \ep
 
 We now proceed to the proofs of the auxiliary results.
 \bp[Proof of Lemma \ref{ab6} item 1]
Set $\D:=B_1(0)$. By scaling, we have to prove the following, for a sufficiently small $\nu$, and with $t:=\ve/r\le 1$:
 \be
 \l{af8}
 \ba
 &{}
\left[0<t\le 1,\, f:\so\to\C,
\int_{\so}|f_\tau|^2+\frac{1}{t^2}\int_{\so}(1-|f|^2)^2\le \nu\right]
\\
&\implies \left[\bigg|\int_{\so}f\wedge f_\tau\bigg|\le\mu
\ \& \ \exists \, h\in H^1_f(\D ; \C)\text{ s.t. } G_{t}(h, \D)\le \mu \right]. 
\ea
 \ee
 
We first note that
 \be
 \l{af9}
 \ba
 |f|^2=&(|f|^2-1)+1\le \frac 12(1-|f|^2)^2+\frac 12 +1\le \frac{1}{2t^2}(1-|f|^2)^2+\frac 32.
 \ea
 \ee
 Combining \eqref{af9} with Cauchy-Schwarz, we find that
 \bes
 \bigg|\int_{\so}f\wedge f_\tau\bigg|^2\le \left(\frac 1{2 t^2}\int_{\so}(1-|f|^2)^2+3\pi\right)\int_{\so}|f_\tau|^2\le (\nu/2+3\pi)\nu,
 \ees
 whence the first part of \eqref{af8} if $(\nu/2+3\pi)\nu\le\mu^2$.
 
 \smallskip
 Concerning the second part of \eqref{af8}, we first note that, for small $\nu$ independent of $t\le 1$, under the assumption of \eqref{af8} we have 
 \be
 \l{af56}
 1/2\le |f|\le 3/2.
 \ee
 
  A cheap way to establish this fact consists of noting that, if a sequence satisfies
 \bes
 \int_{\so}|f_{j,\tau}|^2+\int_{\so}(1-|f_j|^2)^2\to 0\ \text{as }j\to\infty,
 \ees
 then $|f_j|\to 1$ uniformly as $j\to\infty$. 
Alternatively, one may  use the inequality 
\bes
|f(e^{\im \theta})-f(e^{\im\va})|^2\le |\theta-\va|\int_{\so}|f_\tau|^2,\ \fo \theta-\pi\le\va\le\theta+\pi,
\ees
and check that
 \eqref{af56} holds, e.g., when  $\nu\le 7\sqrt 2/64$.

 \smallskip
 Consider $\nu$ such that \eqref{af56} holds for every $f$ satisfying the assumption of \eqref{af8}. Writing, locally, $f=\rho e^{\im\psi}$, we have
 \bes
 \frac 14 \int_{\so}|\psi_\tau|^2\le \int_{\so}|f_\tau|^2\le\nu,
 \ees 
 and thus 
 \bes
 \int_{\so}|\psi_\tau|<2\pi,
 \ees
 provided $\nu<\pi/2$. Therefore, for small $\nu$, $f/|f|$ has zero degree and $\psi$ is globally defined.  
  
  \smallskip
 We now define our competitor 
 \begin{gather*}
h(re^{\im\theta}):=F(re^{\im\theta})e^{\im L(r e^{\im\theta})},\ 0\le r\le 1,\, \theta \in\R,
 \\
 \intertext{where}
 F(re^{\im\theta}):=(1-r)+r \rho(e^{\im\theta})=(1-r)+r |f|(e^{\im\theta}),
 \\
 L(r e^{\im\theta}):=(1-r)a+r \psi(e^{\im\theta}),\ \text{with }a:=\fint \psi.
 \end{gather*}
 
 Clearly, thanks to \eqref{af56}, we have
 \begin{gather}
 \l{ag3}
 (1-|F(re^{\im\theta})|^2)^2\le (1-|f(e^{\im\theta})|^2)^2,
 \\
 \l{ag4}
 |\na F(re^{\im\theta})|^2=(1-|f(e^{\im\theta})|)^2+\left|\frac{d |f(e^{\im\theta})|}{d \theta}\right|^2\le (1-|f(e^{\im\theta})|^2)^2+\left|\frac{d f(e^{\im\theta})}{d \theta}\right|^2,
 \\
 \notag
 |\na L(re^{\im\theta})|^2=(a-\psi(e^{\im\theta}))^2+\left|\frac{d \psi(e^{\im\theta})}{d\theta}\right|^2,
 \end{gather}
 and thus, using the definition of $a$ and Poincar\'e's inequality,
 \be
 \l{ag5}
 \int_\D|\na L|^2\le 2 \int_{\so}\left|\frac{d \psi}{d\theta}\right|^2.
 \ee
 
For small $\nu$ (depending on $\mu$), the second part of \eqref{af8} follows from  the estimates \eqref{ag3}--\eqref{ag5}.
 \ep
 
 \bp[Proof of Lemma \ref{ab6} item 2] The first part of \eqref{ag1} is proved exactly as the first part of \eqref{ab7}.
 
 \smallskip
 We will reduce  the second part of \eqref{ag1} to the situation considered in item 1. Let $r_0$ be sufficiently small (depending on $\O$) and $C_0$ be a sufficiently large universal constant such that, for  $x\in \p\O$ and $0<r\le r_0$, there exists a bi-Lipschitz homeomorphism $\Phi=\Phi_{x,r}:\overline B_r(x)\cap \overline\O\to \overline B_r(0)$ such that $\|D\Phi\|_\infty\le C_0$, $\|D\Phi^{-1}\|_\infty\le C_0$, and $\Phi (\overline B_r(x)\cap\p\O)=\{ x+\im y\in C_r (0);\, y\le 0\}$.
After composing with $\Phi^{-1}$ and using scale invariance, the second part of \eqref{ag1} amounts to proving \eqref{ag2} below.
Set  $\so_+:=\{ x+\im y\in\D;\, y\ge 0\}$, and define similarly $\so_-$. Then, for a sufficiently small $\nu_1$ (depending only on $\mu$) and a sufficiently small $r_1$ (depending on $\mu$ and on a fixed given constant $M$), we have
 \be
 \l{ag2}
 \ba
 \bigg[&0<t\le 1, f\in H^1(\so ; \C), \, |f|=1\text{ and }|f_\tau|\le Mr_1\text{ on }\so_-,
 \\
 &\int_{\so_+}|f_\tau|^2+\frac 1{t^2}\int_{\so_+}(1-|f|^2)^2\le \nu_1\bigg]
\implies \exists\, h\in H^1_f(\D; \C)\text{ s.t. }G_t(h, \D)\le \mu.
 \ea
 \ee
 (In our case, the constant $M$ itself depends only on  $C_0$ and on the Lipschitz constant of $g$.)
 
 \smallskip
 In order to prove the existence of $\nu_1$ and $r_1$ (and thus to complete the proof of the lemma), we note that, if $\nu$ is as in item {\it 1}, then \eqref{ag2} holds provided $\nu_1+\pi (Mr_1)^2<\nu$. It then suffices to let $\nu_1<\nu/2$ and $r_1<\sqrt{2\pi\nu}/M$. 
 \ep
 
 \bp[Proof of Lemma \ref{ah6}] We consider only the case where $B\subset\O$. As explained in the proof of Lemma \ref{ab6} item {\it 2}, the other case is similar.
 
 \smallskip
By estimate \eqref{af4} in Lemma \ref{af1} and Lemma \ref{ac3}, there exists some finite constant $C_1>0$ independent of $s\le 1$ such that, if the assumptions of  \eqref{ah7} hold for such $s$, then 
\be
\l{aj2}
G_\ve(B)=\frac 12\int_B |\na u|^2+\frac 1{4\ve^2}\int_B (1-|u|^2)^2\le C_1 s\le C_1.
\ee

We next note that, for some appropriate constant $C_2$, we have
\be
\l{aj3}
|w(1-|w|^2)|^{4/3}\le C_2((1-|w|^2)^2+1),\ \fo w\in\C.
\ee

Let $B'$ be a ball of size $2\ve$ contained in $B$.
Applying \eqref{aj3} with $z=u(x)$, integrating over $B'$,  and using \eqref{aj2}, we find that
\be
\l{aj4}
\vertii{\ve^{-2}u(1-|u|^2)}_{L^{4/3}(B')}^{4/3}\le C_2\ve^{-8/3}\int_{B'}((1-|u|^2)^2+1)\le C_3\ve^{-2/3}.
\ee

Combining estimate \eqref{ah3} (applied, in $B'$, with $\alpha=1/2$ and thus $q=4/3$), \eqref{aj2}, and \eqref{aj4}, we find that
\be
\l{aj5}
\ve^{1/2}\frac{|u(y)-u(z)|}{|y-z|^{1/2}}\le C_4,\ \fo y, z\in (B')_\ast,
\ee
and thus, in particular, 
\be
\l{aj6}
|u(y)-u(z)|\le C_5, \ \fo y, z\in (B')_\ast,
\ee
where $C_5$ is independent of $s\le 1$.

\smallskip
Combining now \eqref{aj6} with \eqref{aj2}, we find that
\be
\l{aj7}
|u(y)|\le C_6,\ \fo y\in (B')_\ast,
\ee
again with $C_6$ independent of $s\le 1$.

\smallskip
We next note that, for small $w$,  \eqref{aj3}  can be improved as follows :
\be
\l{aj8}
|w|\le C_6\implies |w(1-|w|^2)|^2\le (C_6)^2 (1-|w|^2)^2.
\ee

Arguing as above and using the first inequality in \eqref{aj2}, \eqref{aj8} (instead of \eqref{aj3}), \eqref{aj7}, and \eqref{ah31} (instead of \eqref{ah3}), we find that 
\be
\l{ak1a}
|u(y)-u(z)|\le C_7\sqrt{s},\, \fo y, z\in (B')_\ast,
\ee
with $C_7$ independent of $0<s\le 1$. 

\smallskip
Finally, \eqref{ak1a} and \eqref{aj2} imply \eqref{ah7}, with $t(s)\to 0$ as $s\to 0$.
\ep

\section{Pohozaev type identities and a priori estimates}
\l{ak2}

In this section, we  derive  the Pohozaev identity corresponding to the operator $\L$ in \eqref{aa3}. As for the Dirichlet integral, the identity is obtained by multiplying \eqref{aa3} with $(x-x^0)u_x+(y-y^0)u_y$. For simplicity, we perform our calculations when $x^0=y^0=0$, but in subsequent results we may take other values of $x^0$ and $y^0$. Remarkably, the Pohozaev identity implies {\it a priori} estimates merely under the $\delta$-independent assumption that $\O$ is star-shaped. The idea of using the Pohozaev identity is natural in this context. For the standard Ginzburg-Landau equation, it was successfully used in \cite{bbh} and subsequently \cite{struwe}, \cite{sandier_serfaty}. For our specific system and in a disc, it appears in Kowalczyk, Lamy, and  Smyrnelis \cite{kowalczyk}*{Section 5}.

\bl
\l{ak3}
(General Pohozaev identity) Let $\omega$ be a Lipschitz bounded domain.  Let $X=(x,y)$ denote the 'generic' point in $\R^2$. Let  $\nu$, respectively $\tau$, denote the unit outward normal, respectively the unit directly oriented tangent vector to $\p\omega$. 

Set
\bes
V:=xv_x+yv_y,\, W:=xw_x+yw_y,\, Z=(V,W)\sim x u_x+yu_y.
\ees

Let $u\in C^3(\overline\omega ; \C)$ be a  critical point of $E_\ve$. 
Then 
\be
\l{am3}
\begin{aligned}
\frac 1{\ve^2}\int_\omega (1-|u|^2)^2=&\frac 1{2\ve^2}\int_{\p\omega} (1-|u|^2)^2 (X\cdot\nu)
\\
&
-2K_1\int_{\p\omega} (\div u)(Z\cdot\nu)
-2K_3\int_{\p\omega} (\curl u) (Z\cdot\tau)
\\
&
+K_1\int_{\p\omega} (\div u)^2 (X\cdot\nu)+K_3\int_{\p\omega} (\curl u)^2 (X\cdot\nu).
\end{aligned}
\ee
\el

\bp
We mimic the proof of Pohozaev's identity. 
We rewrite \eqref{aa3} as
\be
\l{am0}
\begin{cases}
-K_1 (\div u)_x+K_3(\curl u)_y=\ve^{-2}v(1-|v|^2)
\\
-K_1(\div u)_y-K_3(\curl u)_x=\ve^{-2}w(1-|u|^2)
\end{cases}.
\ee

We let $B_j$ denote a boundary term that we will make explicit at the end of the proof.

\smallskip
Multiplying the first equation in \eqref{am0} with $V$ and the second one with $W$, integrating once by parts and summing up the results, we find that
\be
\l{am1}
-B_1+K_1\int_\omega (\div u)(\div Z)+K_3\int_\omega (\curl u)(\curl Z)=-B_2+\frac 1{2\ve^2}\int_\omega (1-|u|^2)^2.
\ee

We next note the 2D-identities 
\be
\l{am2}
(\div u)(\div Z)=\frac 12 \div [(\div u)^2 X],\ (\curl u)(\curl Z)=\frac 12 \div[(\curl u)^2 X].
\ee

Inserting \eqref{am2} into \eqref{am1} and integrating, we find that
\bes
-B_1+K_1 B_3+K_3 B_4=-B_2 +\frac 1{2\ve^2}\int_\omega (1-|u|^2)^2.
\ees

We obtain the conclusion of the lemma by noting that
\begin{gather*}
B_1=K_1\int_{\p\omega} (\div u)(Z\cdot\nu)+K_3\int_{\p\omega} (\curl u) (Z\cdot\tau), 
\\
B_2=\frac 1{4\ve^2}\int_{\p\omega} (1-|u|^2)^2 (X\cdot\nu),
\\
B_3=\frac 12\int_{\p\omega}(\div u)^2 (X\cdot \nu),
\\
B_4=\frac 12 \int_{\p\omega} (\curl u)^2 (X\cdot\nu).
\qedhere
\end{gather*}
\ep

We next rewrite the identity \eqref{am3} in normal and tangential coordinates on $\p\omega$.  

We note the following identities, with $(\mathbf{i}, \mathbf{j})$ the canonical basis of $\R^2$:
\be
\l{am3a}
\ba
\div u=&(\na v)\cdot\mathbf{i}+(\na w)\cdot \mathbf{j}=(v_\tau \tau+v_\nu\nu)\cdot\mathbf{i}+(w_\tau \tau+w_\nu\nu)\cdot\mathbf{j}\\
=&u_\tau\cdot\tau+u_\nu\cdot\nu.
\ea
\ee

We write $\nu=(\nu_x, \nu_y)$ and $\tau=(\tau_x,\tau_y)$. Using \eqref{am3a} and the identities
\bes
\nu_x=\tau_y,\ \nu_y=-\tau_x,\ \curl u=\div\, (w, -v),
\ees
we find that
\be
\l{am4}
\curl u=u_\nu\cdot\tau-u_\tau\cdot\nu.
\ee

Similarly, we have
\begin{gather}
\l{am5}
Z=\left(X\cdot\tau\right) u_\tau+\left(X\cdot\nu \right)u_\nu,
\\
\l{am6}
Z\cdot\nu=\left(X\cdot\tau\right)  \left(u_\tau\cdot\nu\right)+\left(X\cdot\nu \right)\left(u_\nu\cdot\nu\right),
\\
\l{am7}
Z\cdot\tau=\left(X\cdot\tau \right) \left(u_\tau\cdot\tau\right)+\left(X\cdot\nu \right)\left(u_\nu\cdot\tau\right).
\end{gather}

Inserting \eqref{am3a}--\eqref{am7} into \eqref{am3} and rearranging the terms,  we  obtain the following consequence of \eqref{am3}.
\bl
\l{an1}
With the notation in Lemma \ref{ak3}, we have, for any  $X^0\in\R^2$,
\be
\l{an2}
\begin{aligned}
\frac 1{\ve^2}\int_\omega (1-|u|^2)^2=&\frac 1{2\ve^2}\int_{\p\omega} (1-|u|^2)^2 ((X-X^0)\cdot\nu)
\\
&
+\int_{\p\omega} Q_1(X-X^0, u_\tau\cdot \tau, u_\tau\cdot\nu)
\\
&-\int_{\p\omega} Q_2(X-X^0, u_\nu\cdot \tau, u_\nu\cdot\nu)
\\
&+\int_{\p\omega} Q_3(X-X^0, u_\tau\cdot \tau, u_\tau\cdot\nu,u_\nu\cdot \tau, u_\nu\cdot\nu),
\end{aligned}
\ee
where the $Q_j$'s are quadratic forms  with coefficients depending on $X-X^0$,  explicitly given by
\begin{gather}
\l{an3}
\ba
Q_1(X-X^0, \xi_1, \xi_2)=&K_1((X-X^0)\cdot\nu) (\xi_1)^2+K_3((X-X^0)\cdot \nu)(\xi_2)^2
\\
&-2(K_1-K_3) ((X-X^0)\cdot\tau)\xi_1\xi_2,
\ea
\\
\l{an4}
Q_2(X-X^0, \eta_1, \eta_2)=K_3((X-X^0)\cdot \nu)(\eta_1)^2+K_1((X-X^0)\cdot\nu) (\eta_2)^2,
\\
\l{an5}
\ba
Q_3(X-X^0, \xi_1,\xi_2,\eta_1, \eta_2)=&-2K_3((X-X^0)\cdot\tau)\xi_1\eta_1
\\
&-2 K_1 ((X-X^0)\cdot\tau)\xi_2\eta_2.
\ea
\end{gather}
\el

Specializing to the case where $\omega$ is a disc, respectively a half-disc, we obtain the following consequences of our calculations. 
\bl
\l{an6}
Assume \eqref{ak1}.
Let $u\in C^3(\overline B)$ be a critical point of $E_\ve$ in a disc $B$ of radius $r$.
Then 
\bes
\frac 1{\ve^2}\int_B (1-|u|^2)^2+(1-\delta_1)r\int_{\p B} |u_\nu|^2\le \frac r{2\ve^2}\int_{\p B} (1-|u|^2)^2+(1+\delta_1)r\int_{\p B}|u_\tau|^2.
\ees
\el
\bl
\l{an8}
Assume \eqref{ak1}.  
Then there exist some finite positive constants $C_j=C_j(\delta_1)$, $j=1, 2, 3$, such that, if $u\in C^3(\overline H)$ is a critical point of $E_\ve$ in a half-disc $H$ of radius $r$, then 
\bes
\frac 1{\ve^2}\int_H (1-|u|^2)^2+C_1 r\int_{\p H} |u_\nu|^2\le \frac {C_2r}{\ve^2}\int_{\p H} (1-|u|^2)^2+C_3 r\int_{\p H}|u_\tau|^2.
\ees
\el

\bp[Proof of Lemma \ref{an6}]
The conclusion follows from \eqref{an2}--\eqref{an5} (with $X^0$ the center of $B$), combined with the observation that, in the case of a disc of radius $r$, we have $Q_3=0$ and
\begin{gather*}
Q_1(X-X^0,\xi_1, \xi_2)=K_1 r(\xi_1)^2+K_3 r(\xi_2)^2\le (1+\delta_1) r\verti{\xi}^2,
\\
Q_2(X-X^0,\eta_1, \eta_2)=K_1 r(\eta_2)^2+K_3 r(\eta_1)^2\ge  (1-\delta_1) r\verti{\eta}^2.
\qedhere
\end{gather*}
\ep

\bp[Proof of Lemma \ref{an8}]
In what follows, $C_j$ denotes a generic positive constant depending possibly on $\delta_1$.

\smallskip
With no loss of generality, we may assume that $r=1$ and
\bes
H=\{ X=(x,y)\in\R^2; |X|<1,\, y>0\}. 
\ees

Let $0<a<1$  be any fixed number, and set $X^0=(0,a)$. It is easy to see that 
\begin{gather}
\l{ap2}
(X-X^0)\cdot\nu\ge C_3>0,\ \fo X\in\p H,
\\
\l{ap4}
|(X-X^0)\cdot\nu|\le C_4, \ \fo X\in\p H,
\\
\l{ap3}
|(X-X^0)\cdot \tau|\le C_5,\ \fo X\in \p H.
\end{gather}

Combining \eqref{an2}--\eqref{an5} and \eqref{ap2}--\eqref{ap3}, we find that
\bes
\ba
\frac 1{\ve^2}&\int_H (1-|u|^2)^2+C_3(1-\delta_1)\int_{\p H}|u_\nu|^2
\\
 \le& \frac 1{\ve^2}\int_H (1-|u|^2)^2+\int_{\p H} Q_2(X-X^0, u_\nu\cdot\tau, u_\nu\cdot\nu)
\\
 \le & \frac {C_4}{2\ve^2}\int_{\p H} (1-|u|^2)^2
 +2(C_4+C_5)\int_{\p H}|u_\tau|^2
 \\
 &+4 C_4\int_{\p H}(|u_\tau\cdot\tau||u_\nu\cdot\tau|+|u_\tau\cdot\nu||u_\nu\cdot\nu|)
 \\
 \le 
 & \frac {C_4}{2\ve^2}\int_{\p H} (1-|u|^2)^2
 +2(C_4+C_5)\int_{\p H}|u_\tau|^2\\
 &+4 C_4\int_{\p H}|u_\tau||u_\nu|
  \\
 \le 
 & \frac {C_4}{2\ve^2}\int_{\p H} (1-|u|^2)^2
 +2(C_4+C_5)\int_{\p H}|u_\tau|^2\\
 &+ \frac 12 C_3(1-\delta_1)\int_{\p H}|u_\nu|^2+C_6\int_{\p H} |u_\tau|^2,
\ea
\ees
whence the conclusion of the lemma. 
\ep

By a straightforward modification of the proof of Lemma \ref{an8}, we obtain the following result in a fixed bounded domain $\O$, that we state without proof. 
\bl
\l{ap1}
Assume \eqref{ak1}. Then there exist some finite positive constants $C_j=C_j(\delta_1)$, $j=1, 2, 3$,  and $r_0=r_0(\delta_1, \O)$ such that, if $u\in C^3(\overline{B_r(x_0)\cap\O})$ is a critical point of $E_\ve$ in  $B_r(x_0)\cap\O$,  with $r\le r_0$ and $x_0\in \p\O$, then 
\be
\l{as4}
\ba
\frac 1{\ve^2}\int_{B_r(x_0)\cap\O} (1-|u|^2)^2+C_1 r\int_{\p (B_r(x_0)\cap\O)} |u_\nu|^2\le &\frac {C_2r}{\ve^2}\int_{\p (B_r(x_0)\cap\O)} (1-|u|^2)^2
\\
&+C_3 r\int_{\p (B_r(x_0)\cap\O)}|u_\tau|^2.
\ea
\ee
\el

When $\O$ is strictly star-shaped, the local estimate \eqref{as4} upgrades to a global estimate.
\begin{lemma}
\l{as5}
Assume \eqref{ak1}. Assume that $\O$ is strictly star-shaped,  i.e., that there exist some $X^0\in\O$ and $C_3>0$ such that
\be
\l{as1}
(X-X^0)\cdot \nu\ge C_3,\ \fo X\in\p\O.
\ee

Let $u=u_\ve$ be a critical point of $E_\ve$ in $H^1_g(\O)$. Then there exists a finite positive constant $C=C(\delta_1, C_3)$  such that 
\be
\l{as6}
\frac 1{\ve^2}\int_{\O} (1-|u|^2)^2+\int_{\p \O} |u_\nu|^2\le C \int_{\p \O}|g_\tau|^2.
\ee
\end{lemma}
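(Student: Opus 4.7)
The strategy is to apply the general Pohozaev identity of Lemma \ref{an1} with $\omega=\O$ and with $X^0$ the strictly star-shaped center appearing in \eqref{as1}, and to exploit the boundary structure as follows. First, since $g:\p\O\to\so$, we have $|u|=1$ on $\p\O$, so $(1-|u|^2)^2\equiv 0$ on $\p\O$ and the first boundary term in \eqref{an2} drops out. Consequently, \eqref{an2} reduces to
\bes
\frac 1{\ve^2}\int_\O (1-|u|^2)^2=\int_{\p\O}\bigl[Q_1-Q_2+Q_3\bigr],
\ees
with $Q_1,Q_2,Q_3$ given by \eqref{an3}--\eqref{an5} (and argument $X-X^0$). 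The point is that, along $\p\O$, the tangential derivative $u_\tau$ equals $g_\tau$ and is therefore a \emph{known} bounded quantity, whereas $u_\nu$ is the only unknown trace.

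\textbf{The key bounds.} Since $\overline\O$ is bounded, the functions $(X-X^0)\cdot\nu$ and $(X-X^0)\cdot\tau$ are uniformly bounded on $\p\O$. Combined with \eqref{ak1}, this yields
\bes
|Q_1(X-X^0,u_\tau\cdot\tau,u_\tau\cdot\nu)|\le C\,|g_\tau|^2\quad \text{on }\p\O,
\ees
and, by Young's inequality applied to each cross term in $Q_3$,
\bes
|Q_3(X-X^0,u_\tau\cdot\tau,u_\tau\cdot\nu,u_\nu\cdot\tau,u_\nu\cdot\nu)|\le \eta\,|u_\nu|^2+C_\eta\,|g_\tau|^2
\ees
for any $\eta>0$ of our choice. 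The crucial use of the star-shaped hypothesis \eqref{as1} enters through $Q_2$: since $(X-X^0)\cdot\nu\ge C_3>0$ on $\p\O$, we get
\bes
Q_2(X-X^0,u_\nu\cdot\tau,u_\nu\cdot\nu)\ge (1-\delta_1)C_3\,|u_\nu|^2.
\ees

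\textbf{Assembly.} Plugging these three inequalities into the simplified Pohozaev identity and rearranging gives
\bes
\frac 1{\ve^2}\int_\O(1-|u|^2)^2+\bigl[(1-\delta_1)C_3-\eta\bigr]\int_{\p\O}|u_\nu|^2\le C\int_{\p\O}|g_\tau|^2.
\ees
Choosing $\eta=(1-\delta_1)C_3/2$ yields \eqref{as6} with $C=C(\delta_1,C_3)$. Finally, applying the Pohozaev identity requires $u\in C^3(\overline\O)$: interior $C^\infty$ (in fact analyticity) is guaranteed by Lemma \ref{av2}, and regularity up to $\p\O$ follows from the elliptic boundary theory behind Lemma \ref{ah1} (or a straightforward bootstrap from Lemma \ref{aw6}) once we know $g$ is smooth.

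\textbf{Main obstacle.} The only subtlety is controlling the mixed term $Q_3$: its coefficient involves $(X-X^0)\cdot\tau$, which has \emph{no sign} (even when $\O$ is star-shaped, the tangential component can be of either sign). The Young-inequality absorption into $Q_2$ works cleanly only because star-shapedness gives a \emph{uniform} positive lower bound on $(X-X^0)\cdot\nu$; without \eqref{as1} one could not dominate the $|u_\nu|^2$ absorbed from $Q_3$. This explains why the estimate is genuinely global (valid on all of $\O$) rather than local, and why the strict star-shapedness cannot be dropped in this argument.
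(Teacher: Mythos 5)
Your proof is correct and follows essentially the same route as the paper: the paper simply states that one should copy the proof of Lemma \ref{an8} (the half-disc case), which is exactly the Pohozaev identity of Lemma \ref{an1} with the star-shaped center as $X^0$, coercivity of $Q_2$ via the positive lower bound on $(X-X^0)\cdot\nu$, and absorption of the $Q_3$ cross-terms by Young's inequality. Your additional observations — that the boundary potential term vanishes because $|g|=1$ on $\p\O$, and the remark on the $C^3$-regularity needed to invoke the Pohozaev identity — are correct details that the paper leaves implicit.
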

\begin{proof}
It suffices to copy the proof of Lemma \ref{an8}. There, the existence of $C_3$ follows from the geometry of $H$. In our case, the existence of $C_3$ is an assumption.
\end{proof}

Lemmas \ref{an6} and \ref{ap1} yield the following {\it a priori} estimates for critical points of $E_\ve$ satisfying a natural bound on the energy. In particular, thanks to the energy estimate \eqref{aq2}, these bounds apply to minimizers of $E_\ve$ in $H^1_g(\O ; \C)$.
Note, however, that the estimates below do not imply \eqref{ab4}, since the constants we obtain below depend on the energy bound (which in turn depends on the boundary datum $g$). 
\bl
\l{aq3}
Assume \eqref{ak1}. 
Let $u=u_\ve$ be  critical points of $E_\ve$ in $H^1_g(\O)$ satisfying the bound
\be
\l{aq4}
E_\ve(u)\le K |\ln\ve|.
\ee

Then, with finite constants $C_j=C_j(K, \delta_1)$, $j=1, 2$,  we have, for small $\ve$,
\begin{gather}
\l{aqa}
|u|\le C_1,
\\
\l{aqb}
|\na u|\le C_2/\ve.
\end{gather}

In particular, if $u$ minimizes $E_\ve$ in $H^1_g(\O)$, then $C_1$, $C_2$ may be chosen to depend only on $\deg g$ and $\delta_1$.
\el

Combining Lemma \ref{aq3} with Lemma \ref{aw6}, we obtain the following
\begin{coro}
\l{aw9}
Assume \eqref{ak1}. Fix $g\in C^\infty(\p
\O ; \so)$.  Let $u=u_\ve$, $0<\ve\le 1$, be critical points of $E_\ve$ in $H^1_g(\O)$ satisfying the  energy bound \eqref{aq4}. Then \eqref{aw8} holds. 
\end{coro}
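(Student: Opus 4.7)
The plan is to chain Lemma \ref{aq3} into Lemma \ref{aw6}; no new analysis is required beyond verifying that the hypotheses match. Under the assumptions of the corollary the critical points $u_\ve$ satisfy the energy bound \eqref{aq4}, so Lemma \ref{aq3} applies directly and delivers the uniform pointwise bound $|u_\ve(x)| \le C_1$ on $\overline\Omega$, with $C_1$ depending only on $K$ and $\delta_1$.

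Taking $M := C_1$, this is precisely the \emph{a priori} bound \eqref{aw7} required to invoke Lemma \ref{aw6}. I would then apply that lemma verbatim: its conclusion \eqref{aw8} gives, for every $k \in \mathbb{N}$, the estimate
\[
|D^k u_\ve(x)| \le C_k\, \ve^{-k}, \quad \forall\, x \in \overline\Omega,
\]
with constants $C_k$ depending on $M$, $\delta_1$, $\Omega$ and $g$; since $M = C_1$ depends only on $K$ and $\delta_1$, these constants ultimately depend only on $K$, $\delta_1$, $\Omega$, and $g$, as required.

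There is no real obstacle in this argument; all the substantive work (the Pohozaev-based derivation of the uniform $L^\infty$ bound, and the scaled elliptic bootstrap yielding the derivative estimates at scale $\ve$) is already packaged in Lemmas \ref{aq3} and \ref{aw6}. The only point requiring a moment's attention is to confirm that the constant $M$ emerging from Lemma \ref{aq3} does not smuggle in an $\ve$-dependence, which is guaranteed by \eqref{aqa} holding for all small $\ve$ with a single constant $C_1 = C_1(K, \delta_1)$.
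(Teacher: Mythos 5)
Your argument is exactly the paper's: Lemma \ref{aq3} provides the uniform $L^\infty$ bound \eqref{aqa}, which is then fed in as the hypothesis \eqref{aw7} of Lemma \ref{aw6} to obtain \eqref{aw8}. The observation that $M=C_1$ depends only on $K$ and $\delta_1$, hence is $\ve$-independent, is the right thing to check and closes the argument.
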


\bp[Proof of Lemma \ref{aq3}]
The proof is similar to the one of Lemma \ref{ah6}, with the variation that, for clarity, we perform a blow-up.

\smallskip
In what follows, $C_j$, $j\ge 3$, denotes a finite positive constant depending only on $K$ and possibly $\delta_1$, and $D_j$ a finite positive  universal constant. 
Let $x\in\overline\O$. 
With the notation in the proof of Lemma \ref{ab7}, we have either $B_{\ve^{\alpha_1}}(x)\subset\O$, or $B_{\ve^{\alpha_1}}(x)\not\subset\O$. We consider only the first case, the other one being similar. Pick some $4\ve\le \ve^{\beta}<r<\ve^{\alpha_1}$ such that
\be
\l{aq5}
r\int_{C_r(x)}|\na u|^2+\frac r{\ve^2}\int_{C_r(x)}(1-|u|^2)^2\le C_3.
\ee

By \eqref{aq5} and  Lemma \ref{an6}, we have
\be
\l{aq6}
\frac 1{\ve^2}\int_{B_r(x)}(1-|u|^2)^2\le C_4.
\ee

Assume, for simplicity, that $x=0$. Set $\widetilde u(y):=u(\ve y)$, $y\in B:=B_{4}(0)$. Then
\bes
\L \widetilde u=\widetilde f:=\widetilde u (1-|\widetilde u|^2)\ \text{in }B, 
\ees
so that, by standard elliptic estimates, 
\be
\l{aq7}
\vertii{\widetilde u}_{C^{1/2}(B_\ast)}\le D_1 \|\widetilde f\|_{L^{4/3}(B)}+D_2 \vertii{\widetilde u}_{L^4(B)},
\ee
where $B_\ast$ is as defined in \eqref{eq:bst}. By \eqref{aq6}, \eqref{aq7}, \eqref{aj3}, and the inequality
\bes
|w|^4\le D_3 (1-|w|^2)^2+1,\ \fo w\in \C,
\ees
we find (going back to $u$) that
\be
\l{aq8}
|u(z)-u(t)|\le C_5,\ \fo z, t\in B_{2\ve}(x).
\ee

Combining \eqref{aq8} with \eqref{aq6}, we find that \eqref{aqa} holds in $B_{2\ve}(x)$. Then, using \eqref{aqa} in $B_{2\ve}(x)$ and the estimate
\bes
\vertii{\na \widetilde u}_{L^\infty(B_{\ast\ast})}\le D_4 \|\widetilde f\|_{L^2(B_\ast)}+D_4\vertii{\widetilde u}_{L^\infty(B_\ast)},
\ees
(with $B_\ast=B_2(0)$, $B_{\ast\ast}=B_1(0)$)
and going back to $u$, 
we find that \eqref{aqb} holds in $B_\ve(x)$.
\ep

\begin{lemma}
\l{ar1}
Assume \eqref{ak1}.
Assume that $\O$ is strictly star-shaped. 
Consider critical points $u=u_\ve$ of $E_\ve$ in $H^1_g(\O)$ satisfying the energy bound \eqref{aq4}. Then, for some finite constant $C_1=C_1(K, \delta_1, \O, g)$ and small $\ve$, we have
\be
\l{ar2}
\int_\O |1-|u|^2|\times |\na u|^2\le C_1.
\ee
\end{lemma}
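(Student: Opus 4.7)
My plan is to follow the classical Bethuel--Brezis--H\'elein strategy (see \cite{bbh}, Lemma~III.2) for the standard Ginzburg--Landau equation, adapted to our anisotropic system and compensating for the absence of a maximum principle. The idea is to multiply the Euler--Lagrange system \eqref{aa3} by the natural quartic test function $(1-|u|^2)u$---that is, to form $(1-|u|^2)v\,\L_1+(1-|u|^2)w\,\L_2$---and integrate over $\O$. Integration by parts produces an identity schematically of the form
\begin{equation*}
K_1\int_\O(1-|u|^2)(\div u)^2+K_3\int_\O(1-|u|^2)(\curl u)^2=\frac{1}{\ve^2}\int_\O|u|^2(1-|u|^2)^2+\mathcal R+\mathcal B,
\end{equation*}
where $\mathcal R$ collects the cross terms $-K_1\int(\div u)\,u\cdot\na|u|^2-K_3\int(\curl u)\,u^\perp\cdot\na|u|^2$ and $\mathcal B$ is a boundary integral on $\p\O$. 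The volume term on the right is $O(1)$, since $|u|\le C_1$ by Lemma~\ref{aq3} and $\ve^{-2}\int_\O(1-|u|^2)^2\le C$ by Lemma~\ref{as5}; the boundary integral $\mathcal B$ is controlled via the companion estimate $\int_{\p\O}|u_\nu|^2\le C$ from Lemma~\ref{as5} together with Cauchy--Schwarz against $|g_\tau|$.

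Next I would use the pointwise algebraic identity $K_1(\div u)^2+K_3(\curl u)^2=|\na u|^2+2J+\delta[(\div u)^2-(\curl u)^2]$, where $J=v_xw_y-v_yw_x$. The $J$-contribution simplifies dramatically: two integrations by parts, exploiting that $|g|=1$ on $\p\O$, yield
\begin{equation*}
\int_\O(1-|u|^2)\,J=\tfrac14\int_{\p\O}g\wedge g_\tau,
\end{equation*}
a pure constant depending only on $g$. The $\delta$-term is bounded by $2\delta\int_\O|1-|u|^2|\,|\na u|^2$, which can be absorbed into the principal term on the left-hand side provided $\delta$ is sufficiently small. The cross term $\mathcal R$ is handled by rewriting $u\cdot\na|u|^2=\div(|u|^2u)-|u|^2\div u$ and integrating by parts once more, so that $\mathcal R$ couples back to the principal term; invoking the equation and Lemma~\ref{as5}, the net contribution reduces to $O(1)$ plus an absorbable multiple of $\int|1-|u|^2|\,|\na u|^2$.

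The main obstacle is the absence of a maximum principle: $|u|$ may exceed $1$, so the signed identity $\int_\O(1-|u|^2)|\na u|^2=O(1)$ is not automatically an absolute-value bound. To close the argument I would split $\O=\{|u|\le 1\}\cup\{|u|>1\}$. On the first set the signed and absolute-value integrals coincide. On the second set, Lemma~\ref{as5} forces the measure of $\{|u|>1+\ve^{1/2}\}$ to be $O(\ve)$; the thin layer $\{1<|u|<1+\ve^{1/2}\}$ contributes $O(\ve^{1/2}|\ln\ve|)\to 0$ since $|1-|u|^2|=O(\ve^{1/2})$ there and $\int_\O|\na u|^2\le CK|\ln\ve|$; on $\{|u|\ge 1+\ve^{1/2}\}$ the $\eta$-ellipticity of Lemma~\ref{ab8} rules out vortex cores in an $\ve^\alpha$-neighbourhood, so that $u$ does not exhibit singular behaviour locally and the elliptic regularity of Lemma~\ref{aw9} yields the extra gradient control needed to bound the contribution by $O(1)$. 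This last step is the most delicate and is where the argument most seriously diverges from the classical BBH proof.
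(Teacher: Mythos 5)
Your overall strategy is quite different from the paper's, and it has genuine gaps that I do not see how to close. The most serious one is the treatment of the set $\{|u|\ge 1+\ve^{1/2}\}$. The crude bounds you have available there are $|1-|u|^2|\le C$, $|\na u|\le C/\ve$ (Lemma \ref{aq3}), and measure $O(\ve)$, which give a contribution of order $1/\ve$, not $O(1)$. You propose to improve this by invoking $\eta$-ellipticity (Lemma \ref{ab8}) together with elliptic regularity, but Lemma \ref{ab8} is stated only for \emph{minimizers} of $E_\ve$ (its proof constructs energy competitors), whereas Lemma \ref{ar1} concerns arbitrary critical points satisfying the logarithmic energy bound \eqref{aq4}; moreover $\eta$-ellipticity controls $||u|-1|$ being small, which is automatically true on the set $\{1<|u|\le 1+\ve^{1/2}\}$ and says nothing that would upgrade the gradient bound from $C/\ve$ to $O(1)$ there. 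The handling of the cross term $\mathcal R$ is also not secured: after writing $u\cdot\na|u|^2=\div(|u|^2u)-|u|^2\div u$ and integrating by parts you generate terms of the form $\int |u|^2\,u\cdot\na(\div u)$, which require second-derivative control that the available a priori bounds do not supply at the $O(1)$ level. Finally, the absorption of the $\delta$-contribution $2\delta\int|1-|u|^2||\na u|^2$ requires an unnecessary smallness restriction on $\delta$: the lemma is stated under the full range \eqref{ak1}, and the absorption is in any case problematic because the principal term is signed, not in absolute value.

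For contrast, the paper avoids all of these difficulties by a two-line interpolation argument: Gagliardo--Nirenberg gives $\|\na u\|_{L^4}^2\le C\|u\|_{L^\infty}\|u\|_{H^2}$, the elliptic $H^2$-estimate combined with $\|\L u\|_{L^2}=\ve^{-2}\|u(1-|u|^2)\|_{L^2}$ and the Pohozaev bound \eqref{as6} (which yields $\|1-|u|^2\|_{L^2}\le C\ve$) give $\|u\|_{H^2}\lesssim \ve^{-1}$, hence $\|\na u\|_{L^4}^2\lesssim \ve^{-1}$; then Cauchy--Schwarz, $\int_\O|1-|u|^2|\,|\na u|^2\le\|1-|u|^2\|_{L^2}\|\na u\|_{L^4}^2\lesssim \ve\cdot\ve^{-1}=O(1)$, concludes. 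This works for all critical points satisfying \eqref{aq4} and for all $\delta$ in the range \eqref{ak1}; no sign considerations, no domain splitting, and no use of $\eta$-ellipticity are needed. I would recommend rebuilding the argument along these lines.
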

\begin{proof}
By the Gagliardo-Nirenberg inequality, \eqref{aqa}, standard elliptic estimates, and Lemma \ref{as5}, we have  the following (global in $\O$) estimates, with constants depending only on $K$, $\delta_1$, $\O$, $g$:
\be
\l{as2}
\ba
\vertii{\na u}_4^2\le & C_2 \vertii{u}_\infty\vertii{u}_{H^2}\le C_3 \vertii{u}_{H^2}\le C_4 (\vertii{g}_{H^{3/2}}+\vertii{Lu}_2)
\\
\le & C_5 (1+\ve^{-2}\vertii{1-|u|^2}_2)\le \frac{C_6}{\ve}.
\ea
\ee

We obtain \eqref{ar2} from \eqref{as6}, \eqref{as2}, and Cauchy-Schwarz.
\end{proof}

\section{Bad discs structure}
\l{as9}

In this section, we provide some easy consequences of the {\it a priori} estimates established in the previous sections. We first define the notion of bad disc. A bad disc $B=B_{C\ve}(x_\ve)$ is a disc of radius $C\ve$, centered at $x_\ve$, such that  $|u_\ve(x_\ve)|\le 1/2$ and $|u_\ve(x)|\ge 1/2$ on $\p(B\cap\O)$.   Note that in our situation we have $u_\ve\in H^1_g(\O)$, and thus the latter condition is equivalent to $|u_\ve(x)|\ge 1/2$ on $\p B\cap\O$. Here, the constant $C$ could possibly depend on a sequence $\ve_\ell\to 0$, but its size is controlled by the {\it a priori} estimates available on $u$.

\begin{lemma}
\l{at1}
Assume that \eqref{ak1} holds. We have:

\smallskip
\noindent
1. 
Suppose that $\O$ is strictly star-shaped. 
Consider critical points $u=u_\ve$ of $E_\ve$ in $H^1_g(\O)$ satisfying the energy bound \eqref{aq4}, where $\ve=\ve_\ell\to 0$. Set $A_\ve:=\{ x\in\O;\, |u(x)|\le 1/2\}$. Then there exist finite constants $N=N(K, \delta_1, \O, \vertii{g_\tau}_2)$ and $L=L(K, \delta_1)$ such that,  possibly along a subsequence $(\ve_{\ell_m})$, 
\begin{gather}
\l{at290}
\text{$A_\ve$ can be covered with at most $N$ bad discs $B_{C\ve}(x_\ve^j)$,}
\\
\intertext{for some constant $C$ possibly depending on $(\ve_\ell)$ such that}
\l{at209}
3\le C\le L,
\\
\intertext{and}
\l{at211}
\text{for $j\neq k$,  $|x_\ve^j-x_\ve^k|\ge 4 C\ve$.}
\end{gather}

Moreover, there exists some finite number $C'\ge C$, possibly depending on $(\ve_\ell)$, such that,  possibly along a  further subsequence $(\ve_{\ell_{m_n}})$, 
\be
\l{at234}
\text{$A_\ve$ can be covered with at most $N$ \enquote{enlarged} bad discs $B_{C'\ve}(x_\ve^j)$}
\ee
such that
\be
\l{at9b}
\text{for $j\neq k$,  $|x_\ve^j-x_\ve^k|\gg\ve$ as $\ve\to 0$.}
\ee
2.
The same conclusions hold if $\O$ is arbitrary and $u$ is a minimizer of $E_\ve$ in $H^1_g(\O)$, where this time $N=N(\deg g, \delta_1)$ and $L=L(\deg g, \delta_1)$.

\smallskip
\noindent
3. For each $\ve$, consider: (i) either critical points of $E_\ve$ in a strictly star-shaped domain $\O$, satisfying the energy bound \eqref{aq4} ; (ii) or minimizers of $E_\ve$. Then there exists a $C=C(\ve)$ satisfying  \eqref{at290}--\eqref{at209}.
\end{lemma}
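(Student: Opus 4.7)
The plan is to run a ball-covering and merging argument in the spirit of Struwe and Sandier--Serfaty, combining the gradient Lipschitz bound of Lemma \ref{aq3} with the potential energy control of Lemma \ref{as5} (for Part 1) or the $\eta$-ellipticity of Lemma \ref{ab8} coupled with Lemma \ref{aq1} (for Part 2). First, for each fixed $\ve$ (Part 3), I use Lemma \ref{aq3} to get $|\na u_\ve|\le C_2/\ve$ with $C_2=C_2(K,\delta_1)$; if $x\in A_\ve$, then $|u_\ve|\le 3/4$ on $B_{\rho\ve}(x)$ with $\rho:=1/(4C_2)$, so $(1-|u_\ve|^2)^2\ge (7/16)^2$ there. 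A greedy Vitali-style selection produces a family $\{x_\ve^j\}_{j=1}^{N_\ve}\subset A_\ve$ with pairwise separation $\ge 2\rho\ve$, whence $A_\ve\subset\bigcup_j B_{2\rho\ve}(x_\ve^j)$. To each center I attach the growth radius $R^j_\ve:=\inf\{r\ge 3\ve:|u_\ve|\ge 1/2\text{ on }\p B_r(x_\ve^j)\cap\O\}$, which is finite because for $r$ larger than the diameter of $\O$ the slicing set $\p B_r(x_\ve^j)\cap\O$ is empty. Setting $C=C(\ve):=\max_j R^j_\ve/\ve$, the discs $B_{C\ve}(x_\ve^j)$ are bad discs. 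A standard absorption step (whenever two discs come within $4C\ve$ of one another, discard one center and enlarge the survivor via the same growth procedure) yields \eqref{at290}--\eqref{at211} with this $\ve$-dependent $C$, proving Part 3.

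For Parts 1 and 2, the tasks are to bound $N_\ve$ and $C(\ve)$ along a subsequence. For Part 1, Lemma \ref{as5} gives $\int_\O (1-|u|^2)^2/\ve^2\le C_3\vertii{g_\tau}_2^{\,2}$; since each disjoint $B_{\rho\ve}(x_\ve^j)$ carries potential energy bounded below by a universal constant $c_0$, we deduce $N_\ve\le N=N(K,\delta_1,\O,\vertii{g_\tau}_2)$. For Part 2, minimality gives $E_\ve(u_\ve)\le\pi D|\ln\ve|+C(g)$ via Lemma \ref{aq1}, and the contrapositive of Lemma \ref{ab8} (with $\lambda=1/2$) shows that a $2\ve^\alpha$-separated subfamily of $A_\ve$ carries energy $>\eta|\ln\ve|$ on disjoint admissible $\ve^\alpha$-balls; summing, at most $\pi D/\eta+o(1)$ such separated points exist. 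Collapsing $\ve$-scale centers lying within $\ve^\alpha$ of one another into single bad discs via the growth step above yields $N\le N(\deg g,\delta_1)$.

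The bound $C(\ve)\le L$ is the crucial quantitative step. If $R^j_\ve/\ve\to\infty$ along some sequence, then for every $r\in[3\ve,R^j_\ve)$ the set $\p B_r(x_\ve^j)\cap\O$ contains a point $y_r$ with $|u(y_r)|<1/2$, and the gradient bound forces the $(\rho\ve)$-ball around $y_r$ into $\{|u|\le 3/4\}$. Selecting the subsequence $r_i=3\ve+i\ve$, the radial separation $|y_{r_i}-y_{r_k}|\ge |r_i-r_k|\ge\ve$ makes the balls $B_{\rho\ve}(y_{r_i})$ pairwise disjoint, so the annulus $B_{R^j_\ve}(x_\ve^j)\setminus B_{3\ve}(x_\ve^j)$ carries a potential-energy contribution $\gtrsim R^j_\ve/\ve$. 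This contradicts the available potential-energy control (from Lemma \ref{as5} in Part 1; from Lemma \ref{ap1} combined with the energy bound of Lemma \ref{aq1} in Part 2) once $R^j_\ve/\ve$ exceeds the constant $L$ advertised in the statement. The ``enlarged'' assertion \eqref{at234}--\eqref{at9b} then follows by iterated subsequence extraction: along a further subsequence, any pair of centers whose mutual distance is a bounded multiple of $\ve$ is absorbed into a single larger bad disc by reapplying the growth procedure; at most $N-1$ such merges occur, and the surviving centers satisfy $|x_\ve^j-x_\ve^k|/\ve\to\infty$.

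The chief difficulty is this quantitative conversion of the pointwise existence of a low-$|u|$ point on each slicing circle into a genuine two-dimensional area lower bound usable in the potential energy integral. The separation estimate for the $y_{r_i}$ is the heart of the matter, and additional care is required near $\p\O$, where the slicing arc $\p B_r(x_\ve^j)\cap\O$ is strictly shorter than a full circle and one must also absorb the corresponding boundary terms. This step is what pins down the dependence of $L$ on $(K,\delta_1)$ in Part 1 and on $(\deg g,\delta_1)$ in Part 2.
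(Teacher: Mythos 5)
Your outline captures the right ingredients (gradient bound from Lemma~\ref{aq3} giving a potential-energy lower bound on $\rho\ve$-balls, the Pohozaev estimate \eqref{as6} or the $\eta$-ellipticity of Lemma~\ref{ab8} to bound the number of such balls), but there is a genuine gap in the enlargement step. You define $R^j_\ve:=\inf\{r\ge 3\ve:|u_\ve|\ge 1/2\text{ on }\p B_r(x_\ve^j)\cap\O\}$ and then set $C:=\max_j R^j_\ve/\ve$, claiming the discs $B_{C\ve}(x_\ve^j)$ are bad discs. This does not follow: for a center $x_\ve^k$ with $R^k_\ve/\ve<C$, nothing prevents the annulus $B_{C\ve}(x_\ve^k)\setminus B_{R^k_\ve}(x_\ve^k)$ from containing low-$|u|$ points, so $|u_\ve|$ need not be $\ge 1/2$ on $\p B_{C\ve}(x_\ve^k)\cap\O$. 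The infimum in your definition gives a \emph{different} good radius for each center, and passing to the max destroys the ``clean circle'' property for all but one of them.

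The paper resolves this with a pigeonhole argument you do not have: once $A_\ve$ is covered by at most $N$ small discs $B_\ve(x_i^\ve)$, for each pair $(i,j)$ the set $\{|x-x_i^\ve|:x\in B_\ve(x_j^\ve)\}$ is an interval of length $\le 2\ve$; since there are only $O(N^2)$ such intervals, one can find a \emph{single} $C$ in $[3,\,3(N+1)(N+2)/2]$ (and, after passing to a subsequence, independent of $\ve$) such that the circle $C_{C\ve}(x_i^\ve)$ avoids \emph{all} the small discs simultaneously for every $i$. That is what makes $B_{C\ve}(x_i^\ve)$ a bad disc with a common radius, and it also directly supplies the bound $C\le L$ with $L$ depending only on $N$, sidestepping the delicate annulus-with-many-low-modulus-circles estimate you use to control the growth radius. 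Your later argument for $C(\ve)\le L$ is in the right spirit but is built on the flawed construction, so as written Parts~1--3 are all incomplete; replacing the ``max of growth radii'' by the counting argument on avoided radii is the missing idea. Separately, in Part~2 you should not expect a global potential-energy bound from Lemma~\ref{ap1} alone; one needs the mean-value argument of \eqref{at5}--\eqref{at6} to localize the Pohozaev estimate on intermediate balls $B_{\ve^\alpha}$, which is the step that reduces Part~2 to the Part~1 covering argument inside each such ball.
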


\noindent
Note that the price to pay in order to have \eqref{at9b} is the lack of control on the constant $C'$.
\begin{proof}
1. 
By \eqref{aqb} in Lemma \ref{aq3}, there exists some $\lambda>0$ such that, if $\ve>0$ is small, we have
\be
\l{at4}
\left[x\in\O, \d\int_{B_\ve(x)\cap\O}(1-|u|^2)^2\le\lambda\right]\implies [|u|\ge 1/2\text{ in }B_\ve(x)].
\ee

Combining this fact with the {\it a priori} estimate \eqref{as6}, we find that any disjoint family of discs $B=B_{\ve/3}(x)$ such that $|u(x)|\le 1/2$ has at most $N$ elements, where $N=N(K, \delta_1, \O, \vertii{g_\tau}_2)$. Therefore, the  set $A_\ve$ can be covered with at most $N$  discs $B_i^\ve=B_\ve(x_i^\ve)$, satisfying $|u(x_i^\ve)|\le 1/2$. 

\smallskip
We next enlarge these discs in order to obtain bad discs. For this purpose, let us note the following. Fix an integer $M$ and consider, for each $\ve$, at most $M$ intervals $I_1^\ve,\ldots, I_k^\ve$, each of length $\le 2\ve$. Then, up to a sequence $\ve_\ell\to 0$, there exists some $3\le  C\le 3(M+1)$ such that $C\ve\not\in \cup_k I_k^\ve$. (A similar conclusion can be drawn if we start from a sequence $\ve_\ell\to 0$, possibly after passing to a subsequence.) Indeed, the union of the $I_k^\ve$'s cannot contain all the points $n\ve$, with $n=3, 6,\ldots, 3(M+1)$, and thus, up to a subsequence, we may take $C=3n$, for one of these   $n$'s. Applying this observation to the sets $\{ |x-x_i^\ve|; x\in B_j^\ve\}$, we find that there exists some $3\le C\le 3(N+1)(N+2)/2$ such that the discs $B_{C\ve}(x_i^\ve)$ cover $A_\ve$ and, in addition, $C_{C\ve}(x_i^\ve)$ does not intersect any of the $B_{\ve}(x_j^\ve)$. Therefore, we have $C_{C\ve}(x_i^\ve)\cap A_\ve=\emptyset$. We find that each $B_{C\ve}(x_i^\ve)$ is a bad disc. 

\smallskip
Finally, the existence of enlarged bad discs satisfying the additional properties \eqref{at211} or \eqref{at9b} is then obtained as in \cite{bbh}*{Chapter IV}. (This may require taking passing to a further subsequence.)

\smallskip
\noindent
{\it 2.} The proof is essentially the same, thanks to the upper bound \eqref{aq2}. The only difference arises from the argument leading to the existence of $N=N(\delta_1, \deg g)$, since we are not in position to rely on the assumption \eqref{as1}. Let $0<\beta<\alpha<1$ be fixed. Let $0<\lambda<1/2$ and let $\eta$ be as in Lemma \ref{ab8} (corresponding to this $\lambda$). If  $\ve$ is sufficiently small and $x\in A_\ve$, then $E_\ve(u_\ve, B_{\ve^\alpha/5}(x))\ge \eta |\ln\ve|$. Therefore, there exists some $N_1=N_1(\deg g, \delta_1)$ such that $A_\ve$ can be covered with at most $N_1$ admissible balls $B_{\ve^\alpha}(x_i^\ve)$. By a mean value argument, there exist a constant $C_0=C_0(\deg g, \alpha, \beta)$ and radii $r=r_i^\ve$ such that $\ve^\alpha\le r_i^\ve\le \ve^\beta$ and
\be
\l{at5}
r\int_{\p (B_r(x_i^\ve)\cap\O)} |u_\tau|^2+ \frac {r}{\ve^2}\int_{\p (B_r(x_i^\ve)\cap\O)} (1-|u|^2)^2\le C_0.
\ee

Combining \eqref{at5} with \eqref{as4}, we find that, for some constant $D=D(\deg g, \delta_1)$, we have, for small $\ve$, 
 \be
 \l{at6}
 \frac 1{\ve^2}\int_{B_{\ve^\alpha}(x_i^\ve)\cap\O}(1-|u|^2)^2\le D,\ \fo i.
 \ee
 
 Using \eqref{at6} and arguing as in the proof of  item {\it 1}, we find that $A_\ve\cap B_{\ve^\alpha}(x_i^\ve)$ can be covered with at most $N_2=N_2(\deg g, \delta_1)$ discs $B_{\ve}(x)$. This yields a covering of $A_\ve$ with at most $N_1N_2$ discs, each of radius $\ve$.
 
 \smallskip
 \noindent
 {\it 3}. By the argument used in the proof of item {\it 1}, we may actually choose some $C(\ve)\in [3, 3(N+1)(N+2)/2]$.
\end{proof}

\section{The energy is bounded on \enquote{intermediate} balls away from the bad discs}
\l{ba1}

In this section, we derive an easy consequence of the  results in Section \ref{ak2} combined with the bad discs structure. 
We consider the setting of Section  \ref{ab3}. 
{\it We assume \eqref{ak1}.}
Let
 $\O$ and the boundary datum $g\in C^\infty(\p\O ; \so)$ be fixed. {\it Let $u=u_\ve$ be a minimizer of $E_\ve$ in $H^1_g(\O ; \C)$.} By Lemma \ref{at1} item {\it 2}, we may find an integer $N$ and a finite constant $C$, depending only on $\deg g$, such that, for small $\ve$ (depending on $\O$ and $g$),  \eqref{at290} holds.  

 \smallskip
An inspection of the proofs in Section \ref{ab3} shows that the smallness of the constant $\eta$ plays a role mainly in the existence of a suitable radius $r$ such that, on $\p (B_r(x)\cap\O)$,  (i) $|u_\ve|$ is far away from $0$ and (ii) $u_\ve/|u_\ve|$ has degree $0$. This is especially useful in Lemmas \ref{ab6} and \ref{ac1}.  {\it If we know that the assumptions (i) and (ii) are valid for all \enquote{useful} radii $r$ (i.e., for the radii obtained {\it via} a mean value argument, as, e.g., in \eqref{ad3}), then  Lemmas \ref{ab6} and \ref{ac1} hold without the smallness assumption on $\eta$.} These considerations lead to the following result.

\bl
 \l{ba2}
 Let $0<\alpha<1<\beta<1/\alpha$ and $\ve<1/2$. Let $S_\ve$ denote the union of the  bad discs in \eqref{at290} and suppose that $B=B_R(x)$ is a ball such that
  \be
 \l{ba3}
\ve^{1/\beta}\le R\le \ve^\alpha\text{ and } \dist (x, S_\ve)\ge R.
 \ee
Then, for sufficiently small $\ve$ and  some finite constant $C_1=C_1(\deg g, \delta_1,\alpha, \beta)$, we have
 \be
 \l{ba4}
 E_\ve(u, B_{R^\beta}(x)\cap\O)\le C_1\text{ and }G_\ve(u, B_{R^\beta}(x)\cap\O)\le C_1.
 \ee 
 \el
 \bp[Sketch of proof]
 In what follows,  $C_j=C_j(\deg g, \delta_1, \alpha, \beta)$ is a  finite constant, and $\ve$ is sufficiently small. 
 By Lemma \ref{aq1}, the assumption $\ve^{1/\beta}\le R\le\ve^\alpha$,  and a mean value argument, there exists some  $R^\beta<r<R/2$ such that 
 \be
 \l{ba5}
 r\int_{\p B_r(x)\cap\O}|u_\tau|^2+\frac r{\ve^2}\int_{\p  B_r(x)\cap\O}(1-|u|^2)^2\le C_2.
 \ee
 
Let $C$ be the constant in \eqref{at290}.
Since $\dist (\overline B_r(x), S_\ve))\ge R/2\ge C\ve$, we find that $|u|\ge 1/2$ in $\overline B_r(x)\cap\overline\O$, and thus, in particular, $u/|u|$ has degree zero on $\p (B_r(x)\cap\O)$. By the above, we are now in position to repeat the proof of  \eqref{ag1} in Lemma \ref{ab6} and obtain the estimates
 \begin{gather}
 \l{ba6fg}
 \verti{\int_{C_r(x)\cap\O} u\wedge u_\tau}\le C_3\ \mbox{and}\ G_\ve(u, B_r(x)\cap\O)\le C_4.
 \end{gather}
 
 Combining \eqref{ba6fg} and   \eqref{af4}, we find that
 \be
 \l{ba7}
 E_\ve(u, B_r(x)\cap\O)\le C_5.
 \ee

 Finally, 
\eqref{ba4} follows from the second inequality in \eqref{ba6fg} and \eqref{ba7}.
 \ep

 In Section \ref{ca1}, we will encounter an avatar of the above considerations; see Lemma \ref{cd1} there.

\section{Zoom near the boundary}
\l{aw1}

In this section, we prove that the bad discs described in the previous section are far away from the boundary at the $\ve$ scale. This fact  is an obvious consequence of the following result.
\bl
\l{aw2}
Let $u=u_\ve$ be critical points of $E_\ve$ in $H^1_g(\O)$ satisfying the energy bound \eqref{aq4}. Let $C$ be a fixed constant. Consider, for each $\ve$, a point $y_\ve\in\O$ such that $\dist(y_\ve, \p\O)\le C\ve$ and let, for small $\ve$, $z_\ve$ be the nearest point projection of $y_\ve$ on $\p\O$. Then 
\be
\l{aw3}
u_\ve (y_\ve)-g(z_\ve)\to 0\ \text{as }\ve\to 0.
\ee
\el
\begin{coro}
\l{aw4}
Under the assumptions of Lemma \ref{at1}, the centers $x_\ve^j$ of the bad discs satisfy $\dist (x_\ve^j, \p\O)\gg\ve$ as $\ve\to 0$.
\end{coro}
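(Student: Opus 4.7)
The corollary follows almost immediately from Lemma \ref{aw2}, so my plan is to set up a clean contradiction argument. Suppose, for some sequence $\ve_\ell \to 0$, there is a center $x_{\ve_\ell}^{j_\ell}$ of a bad disc and a fixed constant $C$ such that $\dist(x_{\ve_\ell}^{j_\ell}, \p\O) \le C \ve_\ell$. Under the assumptions of Lemma \ref{at1}, the map $u_\ve$ is either a critical point of $E_\ve$ on a strictly star-shaped $\O$ satisfying the energy bound \eqref{aq4}, or a minimizer in $H^1_g(\O ; \C)$. In the latter case, \eqref{aq4} holds automatically by the upper bound \eqref{aq2}. Consequently, the hypotheses of Lemma \ref{aw2} are satisfied along the sequence.

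Applying Lemma \ref{aw2} with $y_{\ve_\ell} := x_{\ve_\ell}^{j_\ell}$, and letting $z_{\ve_\ell}$ denote the nearest point projection of $y_{\ve_\ell}$ on $\p\O$, we obtain
\begin{equation*}
u_{\ve_\ell}(x_{\ve_\ell}^{j_\ell}) - g(z_{\ve_\ell}) \to 0 \quad \text{as } \ell \to \infty.
\end{equation*}
Since $g$ takes values in $\so$, we have $|g(z_{\ve_\ell})| = 1$ for every $\ell$. On the other hand, the definition of a bad disc (from Section \ref{as9}) gives $|u_{\ve_\ell}(x_{\ve_\ell}^{j_\ell})| \le 1/2$. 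The triangle inequality then yields
\begin{equation*}
1 = |g(z_{\ve_\ell})| \le |g(z_{\ve_\ell}) - u_{\ve_\ell}(x_{\ve_\ell}^{j_\ell})| + |u_{\ve_\ell}(x_{\ve_\ell}^{j_\ell})| \le o(1) + \tfrac12,
\end{equation*}
a contradiction for large $\ell$. Hence no such $C$ can exist, which is exactly the statement $\dist(x_\ve^j, \p\O) \gg \ve$.

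The reason this proof is so short is that all the real work has already been packaged into Lemma \ref{aw2}, whose proof (via blow-up, interior/boundary elliptic estimates of Lemma \ref{ah1}, and the Pohozaev-based bounds of Section \ref{ak2}) forces $u_\ve$ to look like the boundary datum $g$ at the $\ve$ scale near $\p\O$, leaving no room for vortices. Thus the only potentially delicate point in the corollary itself is bookkeeping: checking that Lemma \ref{aw2}'s hypotheses (critical point with logarithmic energy bound) do apply in both cases of Lemma \ref{at1}, which is routine. I expect no genuine obstacle beyond this verification.
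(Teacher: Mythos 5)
Your proof is correct and matches the paper's approach: the paper introduces Corollary \ref{aw4} precisely as ``an obvious consequence'' of Lemma \ref{aw2}, and your argument -- contradiction via the triangle inequality, using $|g|=1$ versus $|u_\ve(x_\ve^j)|\le 1/2$, with the bookkeeping that \eqref{aq4} holds automatically for minimizers by \eqref{aq2} -- is exactly the spelling-out that the paper leaves implicit.
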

\bp[Proof of Lemma \ref{aw2}] It suffices to obtain \eqref{aw3} up to a subsequence. This is obtained {\it via} a blow-up analysis. Consider the rescaled maps
\bes
v_\ve(z):=u_\ve(\ve z+z_\ve),\ \fo z\in U_\ve:=\ve^{-1}(-z_\ve+\O),
\ees
extended with the same formula to $\overline U_\ve$.

\smallskip
Note that $0\in \p U_\ve$. 
Up to a subsequence, we have $g(z_\ve)\to \tilde{g}$ for some constant $\tilde{g}\in\so$, and the  unit outer normal to $U_\ve$ at the origin converges to some $\xi\in\so$. We work with such a subsequence. Consider the half-plane
$H:=\{ X\in\R^2;\, X\cdot\xi<0\}$. 

\smallskip
We next note that, by Corollary \ref{aw9}, $v_\ve$ has bounded derivatives, at any order. Moreover, the tangential derivative of $v_\ve$ on $\p U_\ve$ is (uniformly) of the order of $\ve$. By the above, possibly up to a further subsequence, we have $v_\ve\to v$ pointwise in $H$ and uniformly on bounded sets, where the map $v$ is smooth in $\overline H$, satisfies $v=\tilde{g}$ on $\p H$, and is a solution of 
\be
\l{aw10}
\L v=v(1-|v|^2) \ \text{in }H.
\ee
(To be specific, uniform convergence on bounded sets means that
\be
\l{aw11}
\max\{ |v_\ve(x)-v(x)|; x\in K\cap \overline U_\ve\}\to 0 \text{ as }\ve\to 0,\ \text{$\fo K\subset \overline H$ compact}.)
\ee

We note that the conclusion of the lemma amounts to $v=\tilde{g}$. In order to obtain this conclusion, we first 
 establish an additional property of $v$. By Lemma \ref{ap1}, the energy bound \eqref{aq4} and a mean value argument, there exist a finite constant $C$ (possibly depending on $\delta_1$, $K$, $\O$, and $g$) and  some $r=r_\ve \in (\ve^{1/2}, \ve^{1/3})$ such that
\be
\l{aw13}
r\int_{B_r(z_\ve)\cap\p\O}|u_{\ve,\nu}|^2\le C.
\ee

Estimate \eqref{aw13} is equivalent to
\be
\l{aw14}
\int_{B_{r/\ve}(0)\cap \p U_\ve}|v_{\ve,\nu}|^2\le C\frac\ve r.
\ee

Using \eqref{aw14}, Corollary \ref{aw9}, and the fact that $\ve/r\to 0$ as $\ve\to 0$, we find that $v$ satisfies
\be
\l{aw16}
\begin{cases}
v\in C^\infty(\overline H)
\\
\L v=v(1-|v|^2)&\text{in }H
\\
v=\tilde{g}\in\so&\text{on }\p H
\\
v_\nu=0&\text{on }\p H
\end{cases}.
\ee

We complete the proof of lemma {\it via} the following 

\smallskip
\noindent
{\it Claim. The only solution of \eqref{aw16} is $v\equiv\tilde{g}$.} In order to prove the claim, we extend $v$ to $\R^2$ with the value $\tilde{g}$ on $H_-:=\R^2\setminus\overline H$, and still denote the extension by $v$. We note that $v\in H^1_{loc}(\R^2)$ and that $\L v=v(1-|v|^2)$ in $H$ and also in $H_-$. The key observation is that we actually have 
\be
\l{aw18}
\L v=v(1-|v|^2)\ \text{in the weak sense in }\R^2.
\ee

Indeed, since  $v\equiv\tilde{g}=$constant on $\p H$ and $v_\nu=0$ on $\p H$, we find that  $\na v=0$ on $\p H$. Therefore,  if $\va\in C^\infty_c(\R^2 ; \R^2)$ and we write $\tilde{g}=(\tilde{g}^1, \tilde{g}^2)$, $\va=(\va^1, \va^2)$, then (using the fact that $\L$ is formally self-adjoint)
\begin{gather*}
\int_H v\cdot {}^t\L\va=\int_H v\cdot \L\va=T+\int_H \L v\cdot \va=T+\int_H \va\cdot v(1-|v|^2),
\\
\int_{H_-}v \cdot {}^t \L\va=\int_{H_-}v\cdot \L\va=-T+\int_{H_-} \L v\cdot \va=-T=-T+\int_{H_-} \va\cdot v(1-|v|^2),
\\
\int_{\R^2} v\cdot {}^t \L\va=\int_{\R^2} v\cdot \L\va=\int_{\R^2}\va\cdot v(1-|v|^2),
\end{gather*}
where $T$ is the boundary term
\bes
\ba
T=\int_{\p H}[&-K_1\tilde{g}^1\nu_x (\va^1_x+\va^2_y)-K_3\tilde{g}^1\nu_y(\va^1_y-\va^2_x) 
\\
&-K_1\tilde{g}^2 \nu_y (\va^1_x+\va^2_y)+K_3\tilde{g}^2\nu_x (\va^1_y-\va^2_x)],
\ea
\ees
whence \eqref{aw18}.

\smallskip
We complete the proof of the claim, and thus of the lemma, by noting that the definition of $v$ on $H_-$ combined with 
Lemma \ref{av2} implies that $v\equiv\tilde{g}$. 
 \ep

\section{Zoom of enlarged bad discs}
\l{bb1}

{\it The results in this section are valid under the assumption \eqref{ak1}. 
We consider maps  $u=u_\ve\in H^1_g(\O)$  satisfying the assumptions  of Lemma \ref{at1}, i.e.: (i) either critical points of $E_\ve$ in $H^1_g(\O)$ in a strictly star-shaped domain $\O$, satisfying the  {\it a priori} bound \eqref{aq4}; (ii) or minimizers of $E_\ve$ in $H^1_g(\O)$. } In particular, the conclusions of Lemma \ref{at1} hold.

\smallskip
Consider an enlarged bad disc as in Lemma \ref{at1}. Consider the rescaled maps $v_\ve$ as in the proof of Lemma \ref{aw2}, with $z_\ve$ replaced with the center of the bad disc. By Lemma \ref{aw6}, Corollary \ref{aw4} and the {\it a priori}  estimates \eqref{aqa}, \eqref{aqb}, and \eqref{aw8a} (all valid, for small $\ve$, as a consequence of the assumptions considered above), the following hold, with constants independent of small $\ve$, $\delta$ satisfying \eqref{ak1}, the boundary datum $g$, and with $R_\ve\to\infty$ as $\ve\to 0$:
\begin{gather}
\l{bc1}
v_\ve\ \text{is defined in }B_{R_\ve}(0),
\\
\l{bc2}
|v_\ve(0)|\le 1/2,
\\
\l{bc3}
|D^k v_\ve(x)|\le \widetilde C_k\ \text{in }B_{R_\ve}(0),\ \fo k,
\\
\l{bc4}
\L v_\ve =v_\ve(1-|v_\ve|^2)\ \text{in }B_{R_\ve}(0).
\end{gather}

Moreover, there exists  finite constants $D_1$, $D_2$ (possibly depending on $g$ if $u_\ve$ is merely a critical point of $E_\ve$) such that
\begin{gather}
\l{bc5}
\int_{B_{R_\ve}(0)}(1-|v_\ve|^2)^2\le D_1,
\\
\l{bc5a}
|v_\ve(x)|\ge 1/2\ \text{if }D_2\le |x|<R_\ve.
\end{gather}

In addition, if $u_\ve$ is a minimizer of $E_\ve$, 
\be
\l{bc6}
v_\ve\ \text{is a minimizer of $E_1$ in $B_{R_\ve}(0)$ with respect to its own boundary condition}.
\ee

It follows that, possibly up to a subsequence, $(v_\ve)$ converges in $C^\infty_{loc}(\R^2)$ to a smooth  map $v:\R^2\to\C$ such that
\begin{gather}
\l{bd1}
|v(0)|\le 1/2,
\\
\l{bd2}
|D^k v|\le \widetilde C_k,\ \fo k,
\\
\l{bd3}
\L v=v(1-|v|^2),
\\
\l{bd4}
\int_{\R^2}(1-|v|^2)^2<\infty,
\end{gather}
and, if $u_\ve$ is a minimizer of $E_\ve$,
\be
\l{bd5}
v\ \text{is an entire local minimizer of $E_1$ (in the sense of De Giorgi)}.
\ee

For further use, let us note that any map $v$ satisfying \eqref{bd2} and \eqref{bd4} satisfies $\lim_{|x|\to\infty}|v(x)|=1$, and thus $v$ has a \enquote{degree at $\infty$}, in the sense that, for large $R$ (depending on $v$), the integer $\deg  v:=\deg (v/|v|, C_R(0))$ is well-defined and independent of $R$.

\smallskip
In what follows, we derive some easy consequences of the analysis developed up to now.

\begin{coro}
\l{be1}
For every $\delta$, there exists a bounded  entire local minimizer of $E_1$ satisfying \eqref{bd4} and of negative degree. 
\end{coro}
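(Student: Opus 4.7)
The plan is to realize the desired $v$ as a $C^\infty_{loc}$ subsequential limit of rescalings of a minimizer $u_\ve$ of $E_\ve$ with negative-degree boundary data, centered at a bad disc whose local degree is negative. Most of the work is already done by Sections \ref{ab3}--\ref{bb1}; the one genuine task is tracking the sign of the degree under the limit.

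First I would take, for the given $\delta$, $\O=\D$ and the specific boundary datum $g(z):=\bar z$ of degree $-1$, and let $u_\ve$ be a minimizer of $E_\ve$ in $H^1_g(\D;\C)$. By Lemma \ref{at1} item 2 and Corollary \ref{aw4}, for $\ve=\ve_\ell\to 0$ we obtain along a subsequence at most $N=N(1,\delta_1)$ enlarged bad discs $B_{C'\ve}(x_\ve^j)$ with pairwise separations $|x_\ve^j-x_\ve^k|\gg\ve$ and with $\dist(x_\ve^j,\p\D)\gg\ve$; outside these discs, $|u_\ve|\ge 1/2$. The local degrees $d_\ve^j:=\deg(u_\ve/|u_\ve|,\p B_{C'\ve}(x_\ve^j))$ are thus well-defined integers, bounded via the gradient estimate \eqref{aqb}, whose sum equals $\deg g=-1$. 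Passing to a further subsequence if necessary I may assume each $d_\ve^j$ is constant in $\ve$; at least one value is strictly negative, and after relabelling I set $d^1:=d_\ve^1\le -1$ and $x_\ve:=x_\ve^1$.

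Next I would perform the blow-up $v_\ve(z):=u_\ve(\ve z+x_\ve)$ on $B_{R_\ve}(0)$, $R_\ve\to\infty$. Conclusions \eqref{bc1}--\eqref{bc6} are in force: \eqref{bc5} comes from Lemma \ref{aq3} combined with the minimizing property, \eqref{bc5a} is a reformulation of the enlarged bad-disc covering property, and \eqref{bc3} is Lemma \ref{aw6}. By compactness, $(v_\ve)$ admits a $C^\infty_{loc}$-convergent subsequence whose limit $v:\R^2\to\C$ satisfies \eqref{bd1}--\eqref{bd5}; in particular $v$ is a bounded entire local minimizer of $E_1$ with $\int_{\R^2}(1-|v|^2)^2<\infty$.

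It remains to verify $\deg v<0$. Fix any $R>D_2$ with $D_2$ as in \eqref{bc5a}, so that $|v|\ge 1/2$ on $C_R(0)$. For small $\ve$, $C^\infty_{loc}$-convergence gives $\deg(v_\ve/|v_\ve|,C_R(0))=\deg v$, and unwinding the rescaling shows this equals $\deg(u_\ve/|u_\ve|,C_{\ve R}(x_\ve))$. Since $\ve R$ is eventually much smaller than $\min_{j\ne 1}|x_\ve^j-x_\ve|$ by \eqref{at9b}, the annulus between $C_{C'\ve}(x_\ve)$ and $C_{\ve R}(x_\ve)$ contains no bad disc, so by homotopy invariance this degree equals $d^1\le-1$. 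The main obstacle is precisely this last bookkeeping: one must simultaneously choose $R$ large enough that $C_R(0)$ lies in the ``safe'' region of \eqref{bc5a}, yet small enough compared to the $\ve$-dependent inter-bad-disc separation in \eqref{at9b}. Everything else follows directly from the a priori estimates and compactness already built up in the preceding sections.
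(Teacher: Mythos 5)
Your proposal is correct and follows essentially the same route as the paper: both use the fact that the sum of the degrees over the enlarged bad discs equals $\deg g<0$ (because $u_\ve$ does not vanish outside them), so at least one bad disc has negative degree, and then blow up at that bad disc using the compactness established in Section \ref{bb1}. The only differences are cosmetic: you specialize to $\O=\D$, $g(z)=\bar z$ (the paper keeps $\O$ and $g$ arbitrary), and you spell out the degree-tracking step via \eqref{bc5a} and \eqref{at9b}, which the paper leaves implicit.
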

\bp
Consider any domain $\O$ and any boundary datum of negative degree. The enlarged bad discs in Lemma \ref{at1} satisfy 
\bes
\sum_j \deg (u_\ve/|u_\ve|, C_{C'\ve}(x_j^\ve))=\deg (g, \p\O)
\ees
(since $u_\ve$ does not vanish in $\overline\O\setminus\cup_j B_{C'\ve}(x^j_\ve)$). 
Therefore, at least one of them has negative degree on $C_{C'\ve}(x_j^\ve)$. Blowing-up this bad disc and possibly after passing to a subsequence, we obtain a $v$ as in the above statement.
\ep

\br
\l{be2}
With more work, it is possible to remove the assumption \eqref{bd4} and establish the following analogue of the main result in Sandier \cite{sandier}. Let $v$ be a bounded  entire local minimizer of $E_1$. Then $\d\int_{\R^2}(1-|v|^2)^2<\infty$.
However, it is unclear how to remove the boundedness assumption on $v$.
\er

We next note a first \enquote{small $\delta$} result.
\bl
\l{be3} We fix a boundary datum $g\in C^\infty(\p\O ; \so)$. 
Let $u_\ve$ be a minimizer of $E_{\ve}$ in $H^1_g(\O ; \C)$. Let $0<\lambda<2\pi$. There exist finite constants $\delta_0$, $C_1$, $C_2$ depending only on $\lambda$, such that, if $\delta<\delta_0$ and $\ve<\ve_0(\delta, \lambda)$, and $v_\ve$ is as above,  then:
\begin{gather}
\l{be4}
\int_{B_{C_1}(0)}(1-|v_\ve|^2)^2\ge \lambda,
\\
\l{be5}
|v_\ve(x)|\ge 1/2\text{ if } C_2\le |x|\le R_\ve,
\\
\l{be6}
\deg (v_\ve/|v_\ve|, C_{C_2}(0))=\pm 1.
\end{gather}
\el

\bc
\l{be8}
If $\delta<\delta_0$ and $u_\ve$ minimizes $E_\ve$ in $H^1_g(\O ; \C)$, then we may replace, in the definition of the enlarged bad discs, the condition $|u_\ve(x^j_\ve)|\le 1/2$ with $u_\ve(x^j_\ve)=0$.
\ec
\bp
This follows by noting that, by \eqref{be5} and \eqref{be6}, $v$ has to vanish in $B_{C_2}(0)$.
\ep

Combining Lemma \ref{be3} with the proof of Corollary \ref{be1}, we obtain the following
\bc
\l{be9}
If $\delta<\delta_0$, then there exists an entire local minimizer satisfying \eqref{bd4} and of degree $-1$.
\ec

\bp[Proof of Lemma \ref{be3}]
For a fixed $\delta$, consider any $v^\delta$ arising as a $C^\infty_{loc}(\R^2)$ limit of $v_\ve$ (possibly up to a subsequence $\ve_k\to 0$). The conclusion of the lemma follows provided any such $v^\delta$ has, for small $\delta$ and with respect to appropriate constants $C_1$ and $C_2$, the following properties: 
\begin{gather}
\l{bf1}
\int_{B_{C_1}(0)}(1-|v^\delta|^2)^2> \lambda,
\\
\l{bf2}
|v^\delta(x)|\ge 2/3\text{ if }  |x|\ge C_2,
\\
\l{bf3}
\deg (v^\delta/|v^\delta|, C_{C_2}(0))=\pm 1.
\end{gather}

In turn, \eqref{bf1}--\eqref{bf3} hold if any $v$ arising as a $C^\infty_{loc}(\R^2)$ limit of $v^\delta$ (possibly up to a subsequence) satisfies 
\begin{gather}
\l{bf4}
\int_{B_{C_1}(0)}(1-|v|^2)^2> \lambda,
\\
\l{bf5}
|v(x)|\ge 3/4\text{ if }  |x|\ge C_2,
\\
\l{bf6}
\deg (v/|v|, C_{C_2}(0))=\pm 1.
\end{gather}

In order to prove \eqref{bf4}--\eqref{bf6}, we note that \eqref{bd1} and \eqref{bd5} applied to $v^\delta$ yield, by letting  $\delta\to 0$, that  
\be
\l{bf8}
|v(0)|\le 1/2
\ee
and $v$ is an entire  local minimizer of $G_1$. (Here, we use the fact that, when $\delta=0$, the minimization of $E_1$ is equivalent to the minimization of $G_1$; see the proof of Lemma \ref{af1}.) Such minimizers are either constants of modulus $1$ (which cannot happen in our case, by \eqref{bf8}) or, up to a rotation and translation,  of the form $v(r e^{\im\theta})=f(r) e^{\pm \im\theta}$ \cite{radiale}. Here, $f\ge 0$ is (strictly) increasing and uniquely determined by the equation $-\Delta v=v(1-|v|^2)$ and the condition $f(r)\to 1$ as $r\to\infty$. Moreover, for such $v$ we have, by a straightforward application of Pohozaev's identity,
\be
\l{bf7}
\int_{\R^2}(1-|v|^2)^2=2\pi
\ee
(see also Brezis, Merle, and Rivi\`ere \cite{bmr} for a more general result). If, for $0<t<1$, we let $r_t$ be the unique solution of $f(r_t)=t$, then, by \eqref{bf7} and the monotonicity of $f$, \eqref{bf4} holds for large $C_1$, while (using \eqref{bf8}) \eqref{bf5} and \eqref{bf6} hold with $C_2:=r_{3/4}+r_{1/2}$.
\ep

\section{Small \texorpdfstring{$\delta$}{delta} analysis. Bad discs structure for minimizers}
\l{bg0}

Throughout this section, we consider minimizers $u=u_\ve$ of $E_\ve$ in $H^1_g(\O; \C)$, with boundary datum of degree $-D<0$. 
The main result of this section is the following.
\bt
\l{bg1}
There exists some $0<\delta_2<1$, possibly depending on $D$, but not on $\O$ or $g$,  such that, if $0\le\delta\le\delta_2$ and $\ve$ is small, then $u$ has exactly $D$ enlarged bad discs, all of degree $-1$.
\et

The proof of Theorem \ref{bg1}, which is somewhat similar to \cite{sandier_serfaty}*{Proof of Theorem 5.4},  is slightly easier in the case where $\O$ is strictly star-shaped. We start with this case, and later we present the minor modifications to be made in order to treat the general case. A first key ingredient is the following straightforward variant of Lemma \ref{an6}  combined with Lemma \ref{as5}.
\bl
\l{bg2}
Let $0\le\delta\le\delta_2<1$. 
If $u$ is a critical point of $E_\ve$ in $H^1_g(\O)$, then there exists some finite constant $C$ depending only on $\delta_2$ such that, for every $x\in\O$ and $r>0$, 
\be
\l{bg3}
\ba
\frac 1{\ve^2}\int_{B_r(x)\cap \O}(1-|u|^2)^2+(1-\delta_2)r\int_{C_r(x)\cap\O}|u_\nu|^2\le &\frac r{2\ve^2}\int_{C_r(x)\cap\O}(1-|u|^2)^2
\\
&+(1+\delta_2)r\int_{C_r(x)\cap\O}|u_\tau|^2
\\
&+C r\int_{B_r(x)\cap\p\O}|\na u|^2.
\ea
\ee

If, moreover, $\O$ is strictly star-shaped, then there exists some finite constant $\widetilde C$, depending on $\delta_2$, $\O$ and $g$, such that, for every $x\in\O$ and $r>0$, 
\be
\l{bg4}
\ba
\frac 1{\ve^2}\int_{B_r(x)\cap \O}(1-|u|^2)^2+(1-\delta_2)r\int_{C_r(x)\cap\O}|u_\nu|^2\le &\frac r{2\ve^2}\int_{C_r(x)\cap\O}(1-|u|^2)^2
\\
&+(1+\delta_2)r\int_{C_r(x)\cap\O}|u_\tau|^2
\\
&+\widetilde C r,
\ea
\ee
and therefore
\be
\l{bg4bis}
\ba
\int_{C_r(x)\cap\O}&\left[\frac12|\na u|^2+\frac{1}{4\ve^2}(1-|u|^2)^2\right]
\\
\ge &\frac 1r\bigg[\frac 1{2(1+\delta_2)\ve^2}\int_{B_r(x)\cap\O} (1-|u|^2)^2-\frac{\widetilde C r}{2(1+\delta_2)}\bigg].
\ea
\ee
\el 

A second ingredient is reminiscent of the expanding balls technique in \cite{sandier_serfaty}*{Chapter 4}. Although we could adapt to our context the more general arguments in \cite{sandier_serfaty}, we rely on a very simple result, sufficient for our purposes. Since the proof does not \enquote{see} the space dimension, we state the result in $\R^n$ (and use it in $\R^2$).

\bl
\l{bh1} 
Let $n\ge 2$, $R_0>0$, and $X\subset\R^n$. Set $U:=\{ x\in\R^n;\, \dist (x, X)\le R_0\}$. Consider an integer $N$, a radius $0<R\le 3^{1-N}R_0$, and $N$ (not necessarily disjoint) closed balls  of radius $R$, $B_j=\overline B_R(x_j)$, $1\le j\le N$, such that $x_j\in X$, $\fo j$. For each $x\in X$ and $0<r\le R_0$, set
\bes
J(x,r):=\{ j;\, B_j\subset B_r(x)\}.
\ees

Let $\lambda_1,\ldots,\lambda_N\ge 0$. 
Suppose that a  non-negative Borel  function $h$ defined on $U\setminus\cup_j B_j$ has  the following property:
\be
\l{bh2}
[S_r(x)\cap B_j=\emptyset, \, \fo j]\implies \int_{S_r(x)}h\ge \frac 1{r}\sum_{j\in J(x,r)}\lambda_j ,\ \fo x\in X,\, \fo 0<r\le R_0.
\ee

Then
\be
\l{bh3}
\int_{U\setminus\cup_j B_j}h\ge \left(\ln \frac {R_0}{3^{N-1}R}\right)\sum_j \lambda_j .
\ee
\el
\bp[Proof of Lemma \ref{bh1}] With no loss of generality, we may assume that $R_0=1$. The proof is by induction on $N$. When $N=1$, we note that
\bes
\int_{U\setminus B_R(x_1)}h\ge \int_{B_1(x_1)\setminus B_R(x_1)}h=\int_R^1 \int_{S_r(x_1)}h(y)\, d\sigma(y)dr\ge \left(\ln\frac 1R\right) \lambda_1,
\ees
whence the conclusion.

\smallskip
Assume next that \eqref{bh3} holds for $(N-1)$ balls and consider $N$ balls as in the statement of the lemma. Set $\d {\bf m}:=\frac 12\min_{i\neq j} |x_i-x_j|$.

\smallskip
\noindent
{\it Case 1. We have ${\bf m}\le R$.} Equivalently, we have $B_i\cap B_j\neq\emptyset$ for some $i\neq j$. With no loss of generality, we may assume that $B_{N-1}\cap B_N\neq\emptyset$. Consider the balls $\widetilde B_j:=\overline B_{3R}(x_j)$, $1\le j\le N-1$. Then $\cup_{1\le j\le N}B_j\subset \cup_{1\le j\le N-1}\widetilde B_j$ and $3R\le 3^{1-(N-1)}$. Associate with these balls the numbers $\widetilde\lambda_1:=\lambda_1,\ldots, \widetilde\lambda_{N-2}:=\lambda_{N-2},  \widetilde\lambda_{N-1}:=\lambda_{N-1}+\lambda_N$. Then clearly the radius $3R$,  the $(N-1)$ balls $\widetilde B_j$,   and the $(N-1)$ numbers $\widetilde\lambda_j$ satisfy the adapted version of \eqref{bh2}. By the induction assumption, we find that 
\bes
\int_{U\setminus \cup_j B_j}h\ge \int_{U\setminus \cup_j \widetilde B_j}h\ge \left(\ln \frac 1{3^{N-2}(3 R)}\right)\sum_j  \widetilde\lambda_j=\left(\ln \frac 1{3^{N-1} R}\right)\sum_j  \lambda_j,
\ees 
whence the desired conclusion in Case 1.

\smallskip
\noindent
{\it Case 2. We have $R<{\bf m}\le 3^{1-N}$.} Consider the balls $\overline B_{\bf m}(x_j)$, $1\le j\le N$. Then (by definition of ${\bf m}$) two of these balls have a common point. By the conclusion of Case 1, we have
 \be
 \l{bh4}
\int_{U\setminus \cup_j B_{\bf m}(x_j)}h \ge \left(\ln \frac 1{3^{N-1}{\bf m}}\right)\sum_j \lambda_j.
\ee

On the other hand, we have (using \eqref{bh2})
\be
\l{bh5}
\int_{\cup_j (B_{\bf m}(x_j)\setminus B_R(x_j))} 
= \sum_j \int_R^{\bf m} \int_{S_r(x_j)}h(y)\, d\sigma(y)dr\ge \left(\ln \frac {\bf m}R\right)\sum_j\lambda_j.
\ee

We complete Case 2 by combining \eqref{bh4} and \eqref{bh5}.

\smallskip
\noindent
{\it Case 3. We have ${\bf m}> 3^{1-N}$.} Then the balls $\overline B_{3^{1-N}}(x_j)$ are mutually disjoint and therefore (using \eqref{bh2})
\bes
\ba
\int_{U\setminus\cup_j B_j}h\ge &\sum_j\int_{B_{3^{1-N}}(x_j)\setminus B_R(x_j)}h=\sum_j \int_R^{3^{1-N}}\int_{S_r(x_j)}h(y)\, d\sigma(y)dr
\\
\ge &\left(\ln \frac 1{3^{N-1}R}\right)\sum_j \lambda_j.
\ea
\qedhere
\ees
\ep

\bp[Proof of Theorem \ref{bg1} when $\O$ is strictly star-shaped] Let $\lambda<2\pi$ be a number to be fixed later (sufficiently close to $2\pi$). Let $\delta<\delta_0=\delta_0(\lambda)$, with $\delta_0$ as at the end of Section \ref{bb1}. By Lemma \ref{be3}, if we prove that there are {\it at most} $D$ enlarged bad discs, then there are {\it exactly} $D$ enlarged bad discs, and their respective degrees are all $-1$.

\smallskip
Let $C_1$ be as in Lemma \ref{be3}. (Note that $C_1$ depends only on $\lambda$.) Let $x_\ve^j$, $1\le j\le N(\ve)\le N$,  be the centers of the enlarged bad discs (as in Lemma \ref{at1} item {\it 2}). For a sufficiently small $\ve$, the enlarged bad discs are contained in $\O$ (Corollary \ref{aw4}) and, by Lemma \ref{be3} and the convergence results derived at the beginning of Section \ref{bb1},  we have (after rescaling back the functions $v_\ve$) 
\be
\label{bk1}
\liminf_{\ve \to 0}\inf_j\frac 1{\ve^2}\int_{B_{C_1\ve}(x^j_\ve)}(1-|u_\ve|^2)^2\ge \lambda.
\ee

\smallskip
Consider some smooth $\so$-valued extension $G$ of $g$ to $\R^2\setminus\overline\O$. (Recall that, for simplicity, we have assumed $\O$ simply connected, and therefore such an extension does exist.) We still denote $u=u_\ve$ the extension of $u$ with the value $G$ outside $\overline\O$.

\smallskip
Consider a small number $a>0$, to be fixed later. Set 
\begin{gather}
\l{bj1}
X:=\overline\O,\ U:=\{ x\in\R^2;\, \dist (x, \O)\le 1\},
\\
\l{bj2}
R:=C_1\ve,\ B_j:=\overline B_R(x^j_\ve),\, 1\le j\le N(\ve),
\\
\l{bj3}
h:=\frac 12|\na u|^2+\frac 1{4\ve^2}(1-|u|^2)^2,
\\
\l{bj4}
\lambda_1=\cdots=\lambda_{N(\ve)}:=\frac \lambda{2(1+\delta_2)}-a.
\end{gather}

Let $a$ be sufficiently small such that $\lambda_1>0$ and
\be
\label{bj4a}
R_0:=\frac{2(1+\delta_2) a}{\widetilde C}\le 1.
\ee

By  \eqref{bk1} and \eqref{bg4bis}, when $x\in\O$, $0<r\le R_0$, and $B_r(x)$ contains at least one enlarged bad disc $B_{j_0}$, we have
\be
\label{bj4b}
\int_{C_r(x)}h\ge \frac 1r\left[\sum_{j\in J(x,r)}\frac \lambda{2(1+\delta_2)}-\frac {\widetilde C R_0}{2(1+\delta_2)}\right] \ge \frac 1r\sum_{j\in J(x,r)}\lambda_j.
\ee

Using \eqref{bj4b} and 
 Lemma \ref{bh1} (with $R_0$ given by \eqref{bj4a}), we find that, for sufficiently small $\ve$, we have
\be
\l{bj4c}
\int_U h\ge \left[\frac{\lambda}{2(1+\delta_2)}-a\right]N(\ve)\ln \frac {R_0}{C_1\ve},
\ee
and therefore (using the definition of $h$ and the fact that $G$ is smooth and $\so$-valued), the Gin\-zburg-Landau energy of $u$ satisfies
\be
\l{bi1}
G_\ve(u,\O)\ge \left[\frac{\lambda}{2(1+\delta_2)}-a\right]N(\ve)\ln \frac 1\ve- C(a,\ve).
\ee
Here, $C(a,\ve)$ is bounded as $\ve\to 0$.

\smallskip
On the other hand, Lemma \ref{aq1} combined with Lemma \ref{af1} yields, with a constant $C_3$ depending on $\delta_2$, $\O$, and $g$, the following bound for the standard Ginzburg-Landau energy:
\be
\l{bg7}
G_\ve (u, \O)\le \frac \pi{1-\delta_2} D \ln\frac 1\ve+C_3,\ \fo 0<\ve\le 1.
\ee

We finally choose $\lambda$, $a$,  and $\delta_2$ such that
\be
\l{bi2}
(D+1)\left[\frac{\lambda}{2(1+\delta_2)}-a\right]>D\frac\pi{1-\delta_2}.
\ee
(This is possible, provided $\lambda$ is sufficiently close to $2\pi$ and $\delta_2$ and $a$ are sufficiently small.) 

\smallskip
By \eqref{bi1}, \eqref{bg7}, and \eqref{bi2}, for small $\ve$ we have $N(\ve)<D+1$, i.e., $N(\ve)\le D$.
\ep

\bp[Proof of Theorem \ref{bg1} in the general case] Let $r_0$ be as in Lemma \ref{ap1}. Let $x_0\in\p\O$. Using the upper bound \eqref{bg7}, \eqref{as4}, and a mean value argument, we find that (with  $C_1$ as in \eqref{as4}, $C_4=C_4(D, \delta_2)$, and $\ve\le\ve_0=\ve_0(D)$) there exists some $r_0/2<r<r_0$ such that
 \bes
 C_1 r\int_{B_r(x_0)\cap\p\O}|u_\nu|^2\le C_4 r \ln \frac  1\ve.
 \ees

Covering $\p\O$ with a finite number (independent of $\ve\le \ve_0$) of balls $B_r(x_0)$ as above, we find that
\be
\l{bi3}
\int_{\p\O}|u_\nu|^2\le C_5  \ln\frac 1\ve.
 \ee
 
 Combining \eqref{bi3} with \eqref{bg3}, we obtain, for small $\ve$,  the following versions of \eqref{bg4} and \eqref{bg4bis}:
 \be
\l{bi4}
\ba
\frac 1{\ve^2}\int_{B_r(x)\cap \O}(1-|u|^2)^2+(1-\delta_2)\int_{C_r(x)\cap\O}|u_\nu|^2\le &\frac r{2\ve^2}\int_{C_r(x)\cap\O}(1-|u|^2)^2
\\
&+(1+\delta_2)r\int_{C_r(x)\cap\O}|u_\tau|^2
\\
&+\widetilde C r \ln\frac 1\ve
\ea
\ee
and
\be
\l{bi4bis}
\ba
\int_{C_r(x)\cap\O}&\left[\frac 12 |\na u|^2+\frac 1{4\ve^2}(1-|u|^2)^2\right]
\\
\ge &\frac 1r\frac 1{2(1+\delta_2)\ve^2}\int_{B_r(x)\cap\O} (1-|u|^2)^2-\frac{\widetilde C r \d \ln (1/\ve)}{(1+\delta_2)r}.
\ea
\ee

Set 
\be
\l{bk2}
R_0:=\frac {(1+\delta_2)a}{\widetilde C \ln (1/\ve)}.
\ee

For small $\ve$, we have $R_0\le 1$. Repeating the argument in the star-shaped case and using \eqref{bi4bis}, we see that, with $R_0$ as in \eqref{bk2}, \eqref{bj4b} still holds. Therefore, we are in position to derive the analogue of \eqref{bi1}, which in our case (using \eqref{bk2}) is
\be
\l{bi1bis}
G_\ve(u,\O)\ge \left[\frac{\lambda}{2(1+\delta_2)}-a\right]N(\ve)\ln \frac 1\ve- C(a,\ve)-\widetilde C(a,\ve)\ln \ln \frac 1\ve,
\ee
with $C(a,\ve)$ and $\widetilde C(a,\ve)$ bounded as $\ve\to 0$. 
Finally, we choose $\lambda$, $a$, and $\delta_2$ as in \eqref{bi2} and complete the proof {\it via} \eqref{bg7}, \eqref{bi2}, and \eqref{bi1bis}.
\ep

\section{Small \texorpdfstring{$\delta$}{delta} analysis. Insight on the locations of the bad discs}
\l{bm1}

Throughout this section, we consider minimizers $u=u_\ve$ of $E_\ve$ in $H^1_g(\O; \C)$, with boundary datum of degree $-D<0$.  {\it We let $\delta_2$ be as in Theorem \ref{bg1}.} 
The main result of this section is the following theorem that will subsequently be sharpened in several directions in Sections \ref{bu1} and \ref{ca1}. 
\bt
\l{bm2}
Let $0<\alpha<1$. 
There exists some $0<\delta_3\le\delta_2$, possibly depending on $D$ and $\alpha$, but not on $\O$ or $g$,  such that, if $0\le\delta\le\delta_3$ and $\ve$ is small, then the centers $x^j_\ve$, $j=1,\ldots, D$, of the enlarged bad discs satisfy
\begin{gather}
\l{bm3}
 {\bf m}:=\frac 12\min_{j\neq k}|x^j_\ve-x^k_\ve|\ge \ve^\alpha,
 \\
 \l{bm4}
 \dist(x^j_\ve, \p\O)\ge \ve^\alpha,\ \fo j.
\end{gather}
\et

\smallskip
The proof of \eqref{bm3} relies on the following generalization of Lemma \ref{bh1}.
\bl
\l{bm5}
We use the same notation as in Lemma \ref{bh1}. 
Let $\Phi:\R_+\to\R_+$ be a superadditive function. Let $N\ge 2$ and  $\lambda_1,\ldots,\lambda_N\ge 0$.  Consider the numbers
\begin{gather*}
b=b(\lambda_1,\ldots,\lambda_N):=\min_{i\neq j}[\Phi(\lambda_i+\lambda_j)-\Phi(\lambda_i)-\Phi(\lambda_j)]\ge 0, 
\\ 
{\bf m}:=\frac 12\min_{i\neq j}|x_i-x_j|,\  \rho:=\max (R, {\bf m}).
\end{gather*}

Suppose that a  non-negative Borel  function $h$ on $U\setminus\cup_j B_j$ has  the following property:
\be
\l{bm6}
\begin{split}
[S_r(x)\cap B_j=\emptyset, \, \fo j]\implies \int_{S_r(x)}h\ge \frac 1{r}\Phi\left(\sum_{j\in J(x,r)}\lambda_j \right),
\\
\ \fo x\in X,\,  \fo 0<r\le R_0.
\end{split}
\ee

Then
\be
\l{bm7}
\int_{U\setminus\cup_j B_j}h\ge \left(\ln \frac {R_0}{3^{N-1}R}\right)\sum_j \Phi(\lambda_j)+b \ln \frac{R_0}{3^{N-1}\rho}.
\ee
\el

Note that one recovers Lemma \ref{bh1} by taking $\Phi=\text{Id}$.
\bp[Proof of Lemma \ref{bm5}] We may assume that $R_0=1$. The proof is by induction on $N$. We mainly rely on Lemma \ref{bh1}, using the fact that, thanks to the superadditivity of $\Phi$, the assumption \eqref{bh2} is satisfied with $\lambda_j$ replaced with $\Phi(\lambda_j)$. The case $N=1$ is a special case of Lemma \ref{bh1} if we take by convention $b=0$ when $N=1$. Assuming that the result holds for $(N-1)$ balls, we argue as in the proof of Lemma \ref{bh1}. 

\smallskip
\noindent
{\it Case 1. We have ${\bf m}\le R$ (and thus $\rho=R$).} Consider the enlarged balls $\widetilde B_j$ as in Case 1 in the proof of Lemma \ref{bh1}. Using the conclusion of Lemma \ref{bh1} (with $\lambda_j$ replaced with $\Phi(\lambda_j)$), we find that
\begin{align*}
\int_{U\setminus \cup_j B_j}h\ge &\left(\ln \frac 1{3^{N-1} R}\right)\left(\sum_{j\le N-2}\Phi (\lambda_j)+\Phi (\lambda_{N-1}+\lambda_N)\right)
\\
=& \left(\ln \frac 1{3^{N-1} R}\right)\left(\sum_{j}\Phi (\lambda_j)+[\Phi(\lambda_{N-1}+\lambda_N)-\Phi(\lambda_{N-1})-\Phi(\lambda_N]\right)
\\
\ge  & \left(\ln \frac 1{3^{N-1} R}\right)\sum_{j}\Phi (\lambda_j)+b \ln \frac 1{3^{N-1} R}
\\
=  & \left(\ln \frac 1{3^{N-1} R}\right)\sum_{j}\Phi (\lambda_j)+b \ln \frac 1{3^{N-1} \rho}.
\end{align*}
{\it Case 2. We have $R<{\bf m}\le 3^{1-N}$.} Arguing as in Case 2 in the proof of Lemma \ref{bh1} and using the conclusion of Case 1 above, we find that
\bes
\ba
\int_{U\setminus \cup_j B_j}h\ge &\left(\ln \frac 1{3^{N-1}{\bf m}}\right)\sum_j \Phi(\lambda_j)+b\ln \frac 1{3^{N-1}{\bf m}}+\left(\ln \frac {\bf m}R\right)\sum_j \Phi(\lambda_j)
\\
=&\left(\ln \frac 1{3^{N-1}R}\right)\sum_j \Phi(\lambda_j)+b\ln \frac 1{3^{N-1}\rho}.
\ea
\ees
{\it Case 3. We have ${\bf m}>3^{1-N}$.} The conclusion follows from Lemma \ref{bh1} noting that in this case we have $\d b\ln\frac 1{3^{N-1}\rho}\le 0$.
\ep

We may now proceed with the proof of \eqref{bm3}. As for Theorem \ref{bg1}, the argument is slightly easier when $\O$ is strictly star-shaped, and we start with this case.
 
\bp[Proof of \eqref{bm3} when $\O$ is strictly star-shaped] Thanks to the assumption $\delta\le\delta_2$,   for small $\ve$ the function $u$ has exactly $D$ enlarged bad discs, each of degree $-1$ (Theorem \ref{bg1}). We extend $u$ to $\R^2$ as in the proof of Theorem \ref{bg1}. Let $X$, $U$, $R$, and $B_j$ be as in 
\eqref{bj1}--\eqref{bj2} (with $N(\ve)=D$). Let 
\begin{gather}
\l{bm8}
w:=\frac u{|u|},\ h:=\frac 12 |\na w|^2\ \text{in }U\setminus\cup_j B_j,
\\
\l{bm9}
\lambda_1=\ldots=\lambda_D:=1,
\\
\l{bm10}
\Phi(t):=\pi t^2, \ \fo t\in\R.
\end{gather}

By Theorem \ref{bg1}, we have
\be
\label{bn1}
\deg (w, C_r(x))=-\# J(x,r)\ \text{if }x\in X,\, 0<r\le 1, \text{ and }C_r(x)\cap B_j=\emptyset,\ \fo j.
\ee

For $x$ and $r$ as above, we have, by  the Cauchy-Schwarz inequality and \eqref{bn1}, 
\be
\l{bn2}
2\pi r \int_{C_r(x)}|w_\tau|^2\ge \left(\int_{C_r(x)}|w_\tau|\right)^2\ge (2\pi \# J(x,r))^2,
\ee
and thus \eqref{bm6} holds with  $\lambda_j$ and $\Phi$ as above.

\smallskip
We use the notation in Lemma \ref{bm5}. By Lemma \ref{bm5},  using the fact that $w$ is smooth and fixed outside $\O$ and that, by the construction of the enlarged bad discs, we have $\rho={\bf m}$ for small $\ve$, we find that 
\be
\l{bn3}
\frac 12\int_{\O\setminus \cup_j B_j} |\na w|^2\ge \pi D \ln \frac 1\ve+2\pi\ln \frac 1{\bf m}-C,
\ee
where $C$ is a finite constant independent of small $\ve$.

\smallskip
On the other hand, one easily checks the following:
\be
\l{bn4}
|u|\ge \frac 12\implies \frac 12|\na u|^2\ge \frac 12|\na w|^2-2|1-|u|^2| |\na u|^2.
\ee

Combining \eqref{bn3} and \eqref{bn4} with the upper bound \eqref{ar2} and the fact that $|u|\ge 1/2$ outside the bad discs, we find that
\be
\l{bn5}
G_\ve(u, \O)\ge \frac 12\int_{\O\setminus\cup_j B_j}|\na u|^2\ge \pi D \ln \frac 1\ve+2\pi\ln \frac 1{\bf m}-\widetilde C,
\ee
where $\widetilde C$ is a finite constant independent of small $\ve$. On the other hand, if $\delta\le \delta_3$, with $\delta_3\le\delta_2$ to be fixed later, \eqref{bg7} (with $\delta_3$ instead of $\delta_2$) holds, 
and thus, using \eqref{bn5}, we find that
\be
\l{bn6}
\ln \frac 1{\bf m}\le \frac D2\left(\frac 1{1-\delta_3}-1\right)\ln\frac 1\ve+\frac{\widetilde C+C_3}{2\pi}.
\ee

From \eqref{bn6}, we find that, for small $\ve$, \eqref{bm3} holds provided 
\bes
\frac D2\left(\frac 1{1-\delta_3}-1\right)<\alpha.\qedhere
\ees
\ep

When $\O$ is a general domain, we do not have \eqref{ar2} at our disposal anymore. We sketch below the adapted argument.

\bp[Sketch of proof of \eqref{bm3} in the general case] The inequality \eqref{bn3} still holds in the general case. However, the strategy for obtaining an analogue of \eqref{bn5} from \eqref{bn3} is different.  Consider some number $0<a<1$ to be fixed later. Define the (enlarged bad discs) as in Section \ref{as9}, but with $1/2$ replaced with $a$.  One can see that Lemma \ref{at1} still holds, possibly with some $N$ depending on $a$. Also, the analysis in Section \ref{bb1} holds, for $\delta\le \delta_0$, with $\delta_0$ possibly depending on $a$. So does Theorem \ref{bg1}. At this stage, using Theorem \ref{bg1} and Corollary \ref{be8} we conclude that, for small $\delta$ and two different $a$'s, the corresponding enlarged bad discs coincide, up to a multiplicative constant factor of their radii.  
Therefore, the estimate \eqref{bm3} does not depend on the specific value of $a$ we choose. 

\smallskip
We next explain how to choose $a$. We have the following substitute of \eqref{bn4}:
\be
\l{bn7}
|u|\ge a\implies |\na u|^2\ge a^2|\na w|^2.
\ee

From \eqref{bn3} (with the enlarged bad discs corresponding to $a$) and \eqref{bn7}, we find that
\be
\l{bn8}
G_\ve(u, \O)\ge \pi a^2 D\ln\frac 1\ve+2\pi a^2 \ln\frac 1{\bf m}-C(a).
\ee

In order to obtain \eqref{bm3} from \eqref{bn8} and \eqref{bg7}, it then suffices to choose $a$ and $\delta_3$ such that
\bes
\frac D{2 a^2}\left(\frac 1{1-\delta_3}-a^2\right)<\alpha.\qedhere
\ees
\ep

\br
\l{bn81}
For a different approach to the case of general (i.e., not necessarily strictly star-shaped) domains $\O$, see also Lemma \ref{bu8} and its applications in Section \ref{bu1}.
\er

The basic ingredient of the proof of \eqref{bm4} is the following simple result.
\bl
\l{bo1}
Let $0<\lambda<2\pi$. 
Let $g\in C^1(\partial\O ; \so)$. Then there exists some $r_0=r_0(\lambda, \O, g)$ such that the following holds. Let $0<r\le r_0$ and $x_0\in\p\O$. Consider a Lipschitz map $w:\p (B_r(x_0)\cap\O)\to\so$ such that $w=g$ on $B_r(x_0)\cap\p\O$ and $\deg w =-1$. Then
\be
\l{bo2}
\frac r2\int_{C_r(x_0)\cap \O}|w_\tau|^2\ge \lambda.
\ee

Similarly if $\deg w=d\in\Z$ and $0<\lambda< 2d^2\pi$.
\el
\bp
For small $r_0$ and with  finite constants $C_0$, $\widetilde C_0$, all depending only on $\O$, and for $r$, $x_0$ as above, the following hold: 
\begin{gather}
\l{bo3}
 \text{$C_r(x_0)\cap\O$ consists of a single arc of endpoints $a=a(r, x_0)$, $b=b(r,x_0)$},
 \\
 \l{bo4}
  {\mathscr H}^1(C_r(x_0))\le \pi r+ C_0 r^2,
  \\
  \l{bo5}
  \dist(a, b)\le \widetilde C_0 r
 \end{gather}
(in the last line, $\dist$ is the geodesic distance on $\p\O$).

\smallskip
Using: (i) \eqref{bo3}--\eqref{bo5}; (ii) the degree condition on $w$; (iii) the fact that $g$ is Lipschitz; (iv) the Cauchy-Schwarz inequality, we find successively, possibly after considering a smaller $r_0$:
\begin{gather*}
\int_{\p (B_r(x_0)\cap \O)}|w_\tau|\ge 2\pi,
\\
\int_{C_r(x_0)\cap\O}|w_\tau|\ge 2\pi-\int_{B_r(x_0)\cap \p\O}|w_\tau|\ge 2\pi-C(g)\widetilde C_0 r,
\\
(\pi r+C_0r^2)\int_{C_r(x_0)\cap\O}|w_\tau|^2\ge (2\pi-C(g)\widetilde C_0 r)^2,
\end{gather*}
and the last line implies \eqref{bo2} (since $\lambda<2\pi$), provided $r_0$ is sufficiently small.
\ep

In the proof of \eqref{bm4}, we will use Lemma \ref{bo1} in conjunction with the following lower   bound, which is a simple  consequence of the Cauchy-Schwarz inequality:
\be
\l{bo6}
\begin{split}
[0<r_1<r_2,\, x\in\R^2,\,  w: B_{r_2}(x)\setminus B_{r_1}(x)\to \so, \, w \text{ Lipschitz},\\
\deg (w, C_{r_1}(x))=d\in\Z]
\implies \frac 12\int_{ B_{r_2}(x)\setminus B_{r_1}(x)}|\na w|^2\ge \pi d^2\ln \frac {r_2}{r_1}.
\end{split}
\ee

\bp[Proof of \eqref{bm4}] Let $0<\alpha_1<\alpha$ be a constant to be fixed later. We take $\delta_3$ such that, when $0\le \delta\le \delta_3$, \eqref{bm3} holds for $\alpha_1$ instead of $\alpha$. Let $\ve$ be sufficiently small. Let $B_j=B_{C_1\ve}(x^j_\ve)$, $1\le j\le D$, be the enlarged bad discs. Assume, with no loss of generality, that the enlarged bad disc closest to $\p\O$ is $B_1$. If $\dist(x^1_\ve, \p\O)\ge \ve^\alpha$, then we are done. Otherwise, by choosing appropriate values of $\delta_3$,  $\alpha_1$, and of $\lambda$ in Lemma \ref{bo1}, we will obtain a contradiction for small $\ve$.  For this purpose, we first use \eqref{bo6} and obtain the following inequalities
\begin{gather}
\l{bo7}
\frac 12\int_{B_{\ve^{\alpha_1}}(x^j_\ve)\setminus B_j}|\na w|^2\ge \pi \ln \frac{\ve^{\alpha_1}}{C_1\ve},\ \fo j\ge 2,
\\
\l{bo8}
\frac 12\int_{B_{\ve^\alpha}(x^1_\ve)\setminus B_1}|\na w|^2\ge \pi \ln \frac{\ve^\alpha}{C_1\ve}.
\end{gather}

We next use Lemma \ref{bo1} and obtain, for small $\ve$ and with $x_0$ the nearest point projection of $x^1_\ve$ on $\p\O$, the bound
\be
\l{bo9}
\frac 12 \int_{B_{\ve^{\alpha_1}/2}(x_0)\setminus B_{2\ve^\alpha} (x_0)}|\na w|^2\ge \lambda\ln \frac{\ve^{\alpha_1}/2}{2\ve^\alpha}.
\ee

We next note that, for small $\ve$, the integration domains in \eqref{bo7}--\eqref{bo9} are mutually disjoint. 
Combining this observation with \eqref{bo7}--\eqref{bo9} and using the fact that $w$ is smooth and fixed outside $\O$, we obtain the lower bound
\be
\l{bo10}
\frac 12\int_{\O\setminus\cup_j B_j} |\na w|^2\ge [\pi (1-\alpha_1) D+(\lambda-\pi) (\alpha-\alpha_1)]\ln\frac 1\ve-C,
\ee
where $C$ is a finite numerical constant (depending on $C_1$ and on the extension of $u$ outside $\O$). 

\smallskip
On the other hand, we know from the proof of \eqref{bm3} in a general domain that, possibly after  modifying the construction of the enlarged bad discs as explained there, we have, for a given $0<a<1$ and sufficiently small $\ve$, 
\be
\l{bo11}
G_\ve(u,\O)\ge a^2\frac 12 \int_{\O\setminus\cup_j B_j} |\na w|^2.
\ee

The estimates \eqref{bo10} and \eqref{bo11} contradict, for small $\ve$, the upper bound \eqref{bg7} (with $\delta_3$ instead of $\delta_2$), provided we have
\be
\l{bo12}
a^2[\pi (1-\alpha_1) D+(\lambda-\pi) (\alpha-\alpha_1)]>\frac{\pi}{1-\delta_3}D.
\ee

We finally note that \eqref{bo12} holds for 
any constant $\lambda>\pi$, provided   we let $\alpha_1$ and $\delta_3$ sufficiently small and $a$ sufficiently close to $1$.
\ep

\br
\l{bp1} 
For the standard Ginzburg-Landau energy and in a strictly star-shaped domain $\O$, the method of proof of \eqref{bm3}--\eqref{bm4} allows us to recover a \enquote{repelling effect} initially established in \cite{bbh}:  for small $\ve$,  the mutual distances between the bad discs and the distances from the bad discs to $\p\O$ is above some positive constant. 

Indeed, combining \eqref{bn3}  with the upper bound \eqref{aq2} and the inequalities \eqref{bn4} and \eqref{ar2} (for the latter one, we rely on the fact that $\O$ is strictly star-shaped), we see that 
\be
\l{bp2}
\liminf_{\ve\to 0}\min_{j\neq k}|x^j_\ve-x^k_\ve|\ge C>0
\ee
for some constant $C$ depending on $\O$ and $g$.

It remains to prove that the bad discs cannot get close to the boundary. Argue by contradiction and assume, e.g., that, possibly up to a subsequence,   ${\bf m}:=\dist (x^1_{\ve}, \p\O)\to 0$. With  $C$ as above, we may repeat the proof of \eqref{bo7}--\eqref{bo9}, with $\ve^{\alpha_1}$ replaced with $C/4$, and find ({\it via} Lemma \ref{bo1} and \eqref{bo6}) that
\be
\l{bp3}
\frac 12\int_{\cup_j (B_{C/4}(x^j_\ve)\setminus B_{C_1\ve}(x^j_\ve))}|\na w|^2\ge \pi D \ln\frac 1\ve+(\lambda-\pi)\ln\frac 1{\bf m}-C.
\ee

Once $\lambda>\pi$ is fixed, the inequality \eqref{bp3} contradicts, for small $m$, the upper bound \eqref{aq2}.
\er

\section{Toy minimization problems on an annulus}
\l{bq1}

There exists a natural construction of competitors for the minimization problem $\min_{H^1_g(\O ; \so)}E_\ve$ when $\deg g=-D<0$. More specifically, set, for $0<R_1<R_2<\infty$,
\bes
A_{R_1, R_2}:={\overline B}_{R_2}(0)\setminus B_{R_1}(0).
\ees

 Consider the class
 \be
 \l{bq2}
 \ba
 {\mathscr H}_{R_1, R_2, C}:=\{v\in H^1(A_{R_1, R_2} ; \so); & \, |v_\theta|\le C\text{ on }C_{R_1}(0)\text{ and }C_{R_2}(0),
 \\
&\deg (v, C_{R_1}(0))=\deg (v, C_{R_2}(0))=-1\}
\ea
 \ee
 and the minimization problem
 \be
\l{bq3}
I_{R_1, R_2, C}:=\min\{ E_0(v); \,  v\in  {\mathscr H}_{R_1, R_2, C}\}.
\ee
 
 The  class  ${\mathscr H}_{R_1, R_2, C}$ is non-empty if $C\ge 1$, since for such $C$ it contains the map
 \bes
 u_0:A_{R_1, R_2}\to\so,\ u_0(z):=\frac{\overline z}{|z|},\ \fo z\in A_{R_1, R_2}.
 \ees 
{\it From now on, we assume that $C\ge 1$.}
 
 \smallskip
 It is straightforward that there exists a minimizer $u_{R_1, R_2, C}$ in \eqref{bq3}. In the special case  where $\delta=0$, \eqref{bq3} is equivalent to the minimization of the standard Dirichlet integral $G_0(v)$, and the above $u_0$ is a minimizer. We conjecture that even when $\delta>0$, a minimizer likewise could be a $0$-homogeneous map (thus a function depending only on $\theta$ and independent of $r=|x|$, $R_1$, $R_2$, and $C$), but we are not aware of a proof of this fact.

\smallskip
Starting from a minimizer $w:=u_{\ve, R_2, C}$ in \eqref{bq3}, where $0<\ve<R_2$ and $R_2$ is sufficiently small (depending on $\O$) and fixed, we construct  a competitor $u$ in $H^1_g(\O ; \so)$ as follows. Consider $D$ disjoint closed balls $\overline B_{R_2}(x_j)$, $j=1,\ldots, D$, contained in $\O$. Let $v$ be the restriction of $w$ to $C_\ve(0)$. We first define $u$ in each $\overline B_{R_2}(x_j)$ by setting
\be
\l{bq4}
u(x)=u(r e^{\im\theta}+x_j):=\begin{cases}
w(x-x_j),&\text{if }\ve\le  r\le R_2
\\
(r/\ve) v(\ve e^{\im\theta}), &\text{if } r\le \ve
\end{cases}.
\ee

We next extend $u$ to $\O$ by considering, in $\O\setminus \cup_j B_{R_2}(x_j)$, an $\so$-valued map, still denoted $u$, agreeing with the above map on each $C_{R_2}(x_j)$ and taking the value $g$ on $\p\O$. It is straightforwards that this is possible such that, in addition,
\be
\l{bq4a}
|\na u|\le C_1(C, \O, g)\ \text{in }\overline\O \setminus \cup_j B_{R_2}(x_j).
\ee

Fixing the value of $C$ and using  $u$ as a competitor, we obtain the upper bound
\be
\l{bq5}
m_\ve:=\min \{ E_\ve(u);\, u\in H^1_g(\O ; \C)\}\le D I_{\ve, R_2,C}+C_2(\O, g).
\ee

A remarkable result of Bethuel, Brezis, and H\'elein \cite{bbh} asserts that, when $\delta=0$ and $\O$ is strictly star-shaped, this construction provides the correct asymptotics of $m_\ve$ up to a bounded error, that is, 
\be
\l{bq6}
m_\ve= D I_{\ve, R_2,C}+O(1)=\pi D \ln \frac 1\ve+O(1)\ \text{as }\ve \to 0.
\ee

Among other ingredients of the proof of \eqref{bq6} in \cite{bbh}, there is the exact formula for $I_{\ve, R_2,C}$. Although we are not aware of such a formula when $\delta\neq 0$, we will establish, in the next two sections, analogues of \eqref{bq6} valid for small $\delta$. 

\smallskip
In the current section, we investigate some basic properties of $I_{R_1, R_2, C}$,
that we collect in the following simple result.
\bl
\l{br0}
Let $C, C'\ge 1$. Let $0<R_1\le R_2\le R_3\le R_4<\infty$. Then the following properties hold, with $C_j$ constants depending only on the variables indicated below.
\begin{gather}
\l{bra}
(1-\delta)\pi \ln \frac{R_2}{R_1}-(1-\delta)\pi\le I_{R_1, R_2, C}\le \pi \ln \frac{R_2}{R_1}.
\\
\l{br1}
I_{tR_1, tR_2, C}=I_{R_1, R_2, C},\ \fo t>0.
\\
\l{br2}
\text{If $R_3/R_2\ge 2$, then }I_{R_1, R_4, C}\le I_{R_1, R_2, C}+I_{R_3, R_4, C}+C_1(C,\delta)\ln \frac{R_3}{R_2}+C_2(C,\delta).
\\
\l{br3}
\text{If $t\ge 2$, then }I_{R_1, t R_2, C}\le I_{R_1, R_2, C}+C_1(C,\delta)\ln t+C_2(C,\delta).
\\
\l{br4}
\text{If  $t\ge 2$, then }I_{R_1, t R_2, C'}\le I_{R_1, R_2, C}+C_3(C,\delta)\ln t+C_4(C,\delta).
\\
\l{br5}
\ba
&
\text{There exists a constant $(1-\delta)\pi\le C_\delta\le \pi$ }\text{such that}
\\
&\lim_{t\to\infty}\frac{I_{R_1, tR_1, C}}{\ln t}=C_\delta,\ \fo R_1>0,\ \fo C\ge 1.
\ea
\\
\l{br6}
\text{If $\delta>0$, then $C_\delta<\pi$}.
\\
\l{cg1}
\text{If $\delta>0$, then $C_\delta>(1-\delta)\pi$}.
\end{gather}
\el
\noindent{\it Proof of \eqref{bra}.} For the left-hand side, we use \eqref{af5} combined with the fact that $u_0$ is a minimizer of $G_0$ in the class ${\mathscr H}_{R_1, R_2, C}$. For the right-hand side, we consider, as in the proof of Lemma \ref{aq1}, the competitor $u_0$.

\smallskip
\noindent
{\it Proof of \eqref{br1}}. This identity follows from the fact that $E_0$ is invariant under homotheties, and so is the condition $|v_\theta|\le C$.

\smallskip
\noindent
{\it Proof of \eqref{br2}.}  
Let $u_2$, respectively $u_3$, be a minimizer  in ${\mathscr H}_{R_1, R_2, C}$, respectively in ${\mathscr H}_{R_3, R_4, C}$. Let $v_2$, respectively $v_3$, be the trace of $u_2$ on $C_{R_2}(0)$, respectively of $u_3$ on $C_{R_3}(0)$.  

\smallskip
Since we have $|(v_j)_\theta|\le C$, $j=2, 3$, and $\deg v_2=\deg v_3=-1$, we may write $v_j(R_j e^{\im\theta})=\exp\, (\im(-\theta+\psi_j(\theta)))$, with 
\be
\l{bs2}
|(\psi_j)_\theta-1|\le C\ \text{and}\ |\psi_j|\le (1+C)\pi. 
\ee

We next \enquote{interpolate} between $v_2$ and $v_3$ by setting
\bes
u((1-\sigma)R_2+\sigma R_3) e^{\im\theta}):=\exp \, (\im(-\theta+(1-\sigma)\psi_2(\theta)+\sigma \psi_3(\theta))),\ \fo 0\le \sigma\le 1.
\ees

Clearly, $u$ agrees with $v_2$ on $C_{R_2}(0)$ and with  $v_3$ on $C_{R_3}(0)$. On the other hand, one has (from \eqref{bs2})
\be
\l{bs1}
|u_r|\le \frac{2(1+C)\pi}{R_3-R_2} \ \text{and} \ |u_\theta|\le C.
\ee

Considering the competitor $\begin{cases}
u_2,&\text{in }A_{R_1, R_2}
\\
u,&\text{in }A_{R_2, R_3}
\\
u_3,&\text{in }A_{R_3,R_4}
\end{cases}$ in the class ${\mathscr H}_{R_1,R_4,C}$ and using, in $A_{R_2,R_3}$, \eqref{af4} and\eqref{bs1}, we obtain that \eqref{br2} holds with $C_1(C,\delta):=(1+\delta) \pi C^2$ and $C_2(C,\delta):=6(1+C)^2\pi^3+2\pi(1+\delta)$.

\smallskip
\noindent
{\it Proof of \eqref{br3}.} This is a special case of \eqref{br2}, with $R_3:=t R_2$ and $R_4=R_3$.

\smallskip
\noindent
{\it Proof of \eqref{br4}.} We essentially repeat the proof of \eqref{br2}. Given a minimizer $u_1$ in ${\mathscr H}_{R_1, R_2, C}$ and letting $v_2$ be the restriction of $u_1$ to $C_{R_2}(0)$, we interpolate, in $\overline B_{tR_2}(0)\setminus B_{R_2}(0)$, between $v_2$ and $\theta\mapsto e^{-\im\theta}$ in order to construct a competitor in ${\mathscr H}_{R_1, t R_2, C'}$.

\smallskip
\noindent
{\it Proof of \eqref{br5}.} By \eqref{br1}, it suffices to investigate the case where $R_1=1$. Set 
\be
\l{bs3}
C_\delta:=\liminf_{t\to\infty}\frac{I_{1,t, 1}}{\ln t}.
\ee

We will prove that \eqref{br5} holds for this  $C_\delta$. To start with, we note that, 
by \eqref{bra}, we have $(1-\delta)\pi\le C_\delta\le \pi$.

\smallskip
We next prove that
\be
\l{bs5}
\limsup_{t\to\infty}\frac{I_{1,t, 1}}{\ln t}\le \liminf_{t\to\infty}\frac{I_{1,t, 1}}{\ln t}.
\ee

Let  $\ve>0$ and let $M>1$ to be fixed later in function of $\ve$. Let $t_0\ge M$ be such that
\be
\l{bs6}
\frac{I_{1, t_0, 1}}{\ln t_0}< C_\delta+\frac\ve 2. 
\ee

For   $t\ge 2 t_0$, let 
\be
\l{bs7}
k:=\left\lfloor \frac{\ln t}{\ln (2t_0)}\right\rfloor\ge 1,
\ee
so that 
\be
\l{bs7a}
(2t_0)^k\le t< (2t_0)^{k+1}. 
\ee

By applying repeatedly \eqref{br1}--\eqref{br3}, we obtain, by a straightforward induction on $k$, that
\be
\l{bs8}
I_{1, t, 1}\le  k I_{1, t_0,1}+(k-1)[C_1(1,\delta)\ln 2+C_2(1,\delta)] + C_1(1,\delta)\ln (2t_0)+C_2(1,\delta).
\ee

To illustrate the proof of \eqref{bs8}, we detail, for example, the case where $k=2$. Then, using successively \eqref{br2}, \eqref{bs7a}, \eqref{br2}, \eqref{br1}, we find that
\bes
\ba
I_{1, t,1}\le &I_{1, 2t_0^2, 1}+C_1(1,\delta)\ln \frac t{2t_0^2}+C_2(1,\delta)
\\
\le & I_{1, 2t_0^2, 1}+C_1(1,\delta)\ln ({\color{red}4}t_0)+C_2(1,\delta)
\\
\le & I_{1,t_0,1}+I_{2t_0,2t_0^2,1}+C_1(1,\delta)\ln 2+C_2(1,\delta)+C_1(1,\delta)\ln ({\color{red}4}t_0)+C_2(1,\delta)
\\
= & I_{1,t_0,1}+I_{1,t_0,1}+C_1(1,\delta)\ln 2+C_2(1,\delta)+C_1(1,\delta)\ln ({\color{red}4}t_0)+C_2(1,\delta),
\ea
\ees
and the last line coincides with the right-hand side of \eqref{bs8} with $k=2$.

\smallskip
Dividing \eqref{bs8} by $k\ln (2t_0)$, letting $k\to\infty$, and taking \eqref{bs7} into account, we find that
\be
\l{bs9}
\limsup_{t\to\infty}\frac {I_{1,t,1}}{\ln t}\le \frac{I_{1,t_0,1}}{\ln t_0}\frac{\ln (2t_0)}{\ln t_0}+\frac {C_1(1,\delta)\ln 2+C_2(1,\delta)}{\ln (2t_0)}.
\ee

We next note that, when  $M>1$ is sufficiently large (depending on $\ve$), $t_0\ge M$, and  \eqref{bs6} holds, the right-hand side of \eqref{bs9} is $<C_\delta+\ve$. Therefore, \eqref{bs5} holds.

\smallskip
To complete the proof of \eqref{br5}, we note the straightforward inequality $I_{1,t,C}\le I_{1,t,1}$. Combining this with \eqref{br4}, we find that, when $t\ge 2$, we have
\be
\l{bs10}
I_{1,t/2,1}+C_3(C,\delta)\ln 2+C_4(C,\delta)\le I_{1,t,C}\le I_{1,t,1}.
\ee

We obtain \eqref{bs5} for an arbitrary constant $C\ge 1$ {\it via} \eqref{bs3}, \eqref{bs5}, and \eqref{bs10}.

\smallskip
\noindent
{\it Proof of \eqref{br6}.} We consider a $C^1$ map $f:\so\to\so$, of degree $-1$, and the competitor $u(r e^{\im\theta}):=f(e^{\im\theta})$, $\fo r>0$. Clearly, for some $C$ depending on $f$, we have $u\in {\mathscr H}_{1,t,C}$. On the other hand, if we write $f(e^{\im\theta})=\exp (\im (-\theta+\psi(\theta)))$, with $\psi$ of class $C^1$ and $2\pi$-periodic, we have
\begin{gather*}
(\div u)(r e^{\im\theta})=\frac 1r (\psi'(\theta)-1) \cos (\psi(\theta)-2\theta),\\
 (\curl u)(r e^{\im\theta})=\frac 1r (\psi'(\theta)-1) \sin (\psi(\theta)-2\theta),
\end{gather*}
and thus
\be
\l{bs11}
\ba
\frac{E_0(u ; A_{1,t})}{\ln t}&=\int_0^{2\pi}\frac{K_1\cos^2(\psi(\theta)-2\theta)+K_3 \sin^2(\psi(\theta)-2\theta)}2(\psi'(\theta)-1)^2\, d\theta
\\
&:= {\mathscr I}(\psi).
\ea
\ee

From \eqref{bs11} and \eqref{br5}, we find that
\be
\l{bs12}
\ba
C_\delta\le &\inf\{ {\mathscr I}(\psi);\, \psi\in C^1([0,2\pi]; \R),\, \psi(0)=\psi(2\pi)\}
\\
=& \min\{ {\mathscr I}(\psi);\, \psi\in H^1([0,2\pi]; \R),\, \psi(0)=\psi(2\pi)\}.
\ea
\ee

By the direct method in the calculus of variations, the $\min$ in the second line of \eqref{bs12} is achieved, and any minimizer satisfies 
\be
\l{bs13}
\ba
F_\psi(\psi(\theta), \theta)&(\psi'(\theta)-1)^2-2[F(\psi(\theta),\theta)(\psi'(\theta)-1)]_\theta=0,
\\
&\text{where }F(\psi,\theta):=\frac {K_1}2 \cos^2(\psi-2\theta)+\frac {K_3}2 \sin^2 (\psi-2\theta).
\ea
\ee

Let us note that 
\be
\l{bs14}
F_\psi(\psi, \theta)=-2\delta \sin(\psi-2\theta)\cos (\psi-2\theta).
\ee

Using \eqref{bs14}, we find that, when $\psi=0$, the left-hand side of the first line of \eqref{bs13} equals $ -6\delta \sin (2\theta) \cos (2\theta)$, and thus, when $\delta\neq  0$, $\psi=0$ is not a minimizer of ${\mathscr I}$. Combining this with the fact that, when $\psi=0$, we have $E_0(u)=\pi \ln t$ (see the proof of \eqref{bra}) and with \eqref{bs12}, we obtain that $C_\delta<\pi$ when $\delta>0$.

\smallskip
\noindent
{\it Proof of \eqref{cg1}.} Let $R_1:=1$, $R_2:=t>1$. Any competitor $v$ in \eqref{bq3} is of the form 
\bes
v=\exp\, (-\im (\theta+\psi)), \ \text{with }\psi\in H^1(A_{1, t}). 
\ees

For $v$ as above, we find,  with $\O:=A_{1,t}$, using the fact that $\Jac v=0$ in $\O$ (since $v$ is $\so$-valued):
\begin{gather*}
\int_\O [(\div v)^2+(\curl v)^2]=\int_\O |\na v|^2+2\int_\O \Jac v=\int_\O |\na v|^2,
\\
\ba
\int_\O |\na v|^2=&\int_\O |\na (\theta+\psi)|^2=\int_\O |\na \theta|^2+\int_\O |\na \psi|^2+ 2\int_\O \frac 1 r \psi_\tau
\\
=&\int_\O |\na\theta|^2+\int_\O |\na\psi|^2=2\pi \ln t +\int_\O |\na\psi|^2,
\ea
\\
\ba
E_0(v)=&\frac{1-\delta}2\int_\O [(\div v)^2+(\curl v)^2]+\delta\int_\O (\div v)^2
\\
=& (1-\delta)\pi \ln t+\frac{1-\delta}2 \int_\O |\na \psi|^2+\delta\int_\O \left[\frac 1r \cos (\psi-2\theta)+\begin{pmatrix}\sin (\theta-\psi)\\
\cos (\theta-\psi)\end{pmatrix}\cdot \na\psi\right]^2.
\ea
\end{gather*}

Assume, by contradiction, that $C_\delta=(1-\delta)\pi$. Then there exist $t_j\to\infty$ and $\psi_j\in H^1(A_{1,t_j})$ such that, with $\O_j:=A_{1, t_j}$, we have
\be
\l{cg2}
\begin{split}
\int_{\O_j}|\na\psi_j|^2+\int_{\O_j}\left[\frac 1r \cos (\psi_j-2\theta)+\begin{pmatrix}\sin (\theta-\psi_j)\\
\cos (\theta-\psi_j)\end{pmatrix}\cdot \na\psi_j\right]^2=o(\ln t_j)
\\
\text{as }j\to \infty.
\end{split}
\ee

By \eqref{cg2} we obtain (using Cauchy-Schwarz on the third line) that
\begin{gather}
\l{cg3}
\int_{\O_j}\left[\frac 1r \psi_{j,\theta}\right]^2\le \int_{\O_j} |\na \psi_j|^2=o(\ln t_j)
\  \text{as }j\to \infty,
\\
\notag
\int_{\O_j}\left[\begin{pmatrix}\sin (\theta-\psi_j)\\
\cos (\theta-\psi_j)\end{pmatrix}\cdot \na\psi_j\right]^2=o(\ln t_j)
\  \text{as }j\to \infty,
\\
\notag
\int_{\O_j}\frac 1r \cos (\psi_j-2\theta)\times \begin{pmatrix}\sin (\theta-\psi_j)\\
\cos (\theta-\psi_j)\end{pmatrix}\cdot \na\psi_j=o(\ln t_j)
\  \text{as }j\to \infty,
\\
\l{cg5}
\int_{\O_j}\left[\frac 1r \cos (\psi_j-2\theta)\right]^2=o(\ln t_j)
\  \text{as }j\to \infty.
\end{gather}

Combining \eqref{cg3} and \eqref{cg5} with a mean value argument, we find that there exist radii $1<r_j<t_j$ such that 
\bes
\frac 1{r_j}\int_{C_{r_j}(0)}\left\{(\psi_{j, \theta})^2+[\cos (\psi_j-2\theta)]^2\right\}\to 0\ \text{as }j\to \infty.
\ees

Therefore, if we set 
\bes
g_j(e^{\im\theta}):=\psi_j(r_j e^{\im\theta}),\ \fo j,\,  \fo \theta,
\ees
we have 
\be
\l{cg6}
\int_{\so} \left\{(g_{j, \theta})^2+[\cos (g_j-2\theta)]^2\right\}\to 0\ \text{as }j\to \infty.
\ee

After subtracting a suitable multiple of $2\pi$ from $g$, we may assume that $0\le g_j(e^{\im\theta_j})\le 2\pi$ for some $\theta_j$, and then \eqref{cg6} implies that, possibly up to a subsequence, there exists some constant $C$ such that $g_j \to C$ uniformly. We obtain from \eqref{cg6} that
\bes
\int_{\so} [\cos (C-2\theta)]^2=0,
\ees
which is impossible. This contradiction completes the proof.\hfill\qedsymbol{}

\medskip
While the above considerations will suffice to yield the correct asymptotics of the minimal energy $m_\ve$ when $D=1$, for higher degrees we rely on the study of a \enquote{cousin} of ${\mathscr H}_{R_1,R_2,C}$. More specifically, when $C\ge 1$, we consider the class
\be
 \l{bt1}
 \ba
 \tilde{\mathscr H}_{R_1, R_2,  C}:=\{v\in H^1(A_{R_1, R_2} ; \so); & \, \int_{C_{R_j}(0)}|v_\theta|^2\le 2\pi R_j C^2, \, j=1,2,
  \\
&\deg (v, C_{R_1}(0))=\deg (v, C_{R_2}(0))=-1\}
\ea
 \ee
 and the minimization problem
 \be
\l{bt2}
\tilde I_{R_1, R_2, C}:=\min\{ E_0(v); \,  v\in  \tilde {\mathscr H}_{R_1, R_2, C}\}.
\ee
 
The following result is a straightforward version of (part of) Lemma \ref{br0}, and its proof is omitted. 

\bl
\l{bt0}
Let $C\ge 1$. Let $0<R_1\le R_2\le R_3\le R_4<\infty$. Then the following properties hold, with $C_j$ constants depending only on the variables indicated below.
\begin{gather}
\l{bt3}
{\mathscr H}_{R_1, R_2, C}\subset \tilde{\mathscr H}_{R_1,R_2,C}, \text{ and therefore}\ \tilde I_{R_1,R_2, C}\le I_{R_1,R_2,C}.
\\
\l{bt4}
\tilde I_{tR_1, tR_2,  C}=\tilde I_{R_1, R_2,  C},\ \fo t>0.
\\
\l{bt5}
\text{If $R_3/R_2\ge 2$, then }I_{R_1, R_4, C}\le \tilde I_{R_1, R_2, C}+\tilde I_{R_3, R_4, C}+C_1(C,\delta)\ln \frac{R_3}{R_2}+C_2(C,\delta).
\\
\l{bt6}
\text{With $C_\delta$ as in \eqref{br5}},\ 
\lim_{t\to\infty}\frac{\tilde I_{R_1, tR_1, C}}{\ln t}=C_\delta,\ \fo R_1>0,\ \fo  C\ge 1.
\end{gather}
\el

\section{Small \texorpdfstring{$\delta$}{delta} analysis. More on the location of bad discs. Asymptotic expansion of the energy}
\l{bu1}

We derive here a number of consequences of the results established in Sections  \ref{bg0}--\ref{bq1}; in particular, we improve the conclusion of \eqref{bm3} in  Theorem \ref{bm2}.

\smallskip
In what follows, we consider some integer $D\ge 1$. Given a domain $\O$ and a boundary condition $g:\p\O\to\so$ of degree $-D$, we let 
\be
\l{bu0}
m_\ve=m_{\ve, \O, g}:=\min\{ E_\ve(u);\, u\in H^1_g(\O; \C)\}.
\ee

Let $\delta_2=\delta_2(D)$ be as is defined in Theorem \ref{bg1}, and let  $0\le \delta\le \delta_2$. By Theorem \ref{bg1}, for small $\ve$, a map $u=u_\ve$ achieving $m_\ve$ has exactly $D$ enlarged bad discs of centers $x^1_\ve,\ldots, x^D_\ve$.

\smallskip
We start with an easy result.
\bt
\l{bu2}
Let $D=1$. 
 Let $\delta\le \delta_2(1)$. Then, for any  $C\ge 1$, we have
\be
\l{bu3}
m_\ve= I_{\ve, 1, C} +O(1)\ \text{as }\ve \to 0.
\ee

In particular, we have
\be
\l{bu3a}
m_\ve=C_\delta \ln \frac 1\ve+o\left(\ln \frac 1\ve\right)\ \text{as }\ve \to 0.
\ee
\et

In the above, $O(1)$ stands for a quantity such that $|O(1)|\le C(\delta, \O, g)<\infty$ as $\ve \to 0$.

\smallskip
We continue with a significant improvement of \eqref{bm3}.

\bt
\l{bu8}
Let $D\ge 2$. 
Let $0<\alpha_0<1$. Assume that $\delta_3=\delta_3(D)<\min (\delta_2(D), 2/(D+2))$ is such that:  if $0\le\delta\le\delta_3$, there exists some $0<\alpha_0=\alpha_0(\delta, \O, g)<1$ with the property that, when  $\ve$ is small (smallness depending on $\delta$),  the centers $x^j_\ve$, $j=1,\ldots, D$, of the enlarged bad discs satisfy
\be
\l{bu99}
 {\bf m}:=\frac 12\min_{j\neq k}|x^j_\ve-x^k_\ve|\ge \ve^{\alpha_0}.
 \ee
 
 Then, for every $0<\alpha<1$ and for $\delta\le \delta_3$ as above, we have, for small $\ve$ (smallness depending on $\alpha$ and $\delta$),
 \be
 \l{bu10}
 {\bf m}\ge \ve^\alpha.
 \ee
 
 In particular, there exists some $\delta_3>0$ such that \eqref{bu10} holds for each $0<\alpha<1$ provided $\ve$ is sufficiently small (smallness depending on $\alpha$ and $\delta$).
\et

The heart of the matter consists of establishing the first part of Theorem \ref{bu8}; the second part of Theorem \ref{bu8} follows from the first part combined with \eqref{bm3}. 

\smallskip
Note that, while in Theorem \ref{bm2} $\delta_3$ depends on both $D$ and $\alpha$, the conclusion of the second part of Theorem \ref{bu8} is that $\delta_3$ can be chosen to depend only on $D$.

\smallskip
An easy consequence of Theorem \ref{bu8} is the following.
\bt
\l{bu5} Let $D\ge 2$.
Let $\delta\le \delta_3(D)$, with $\delta_3(D)$ as in Theorem \ref{bu8}.  Then 
\be
\l{bu6}
m_\ve= D C_\delta \ln\frac 1\ve+o\left(\ln \frac 1\ve\right)\ \text{as }\ve \to 0.
\ee
\et

Our next result complements Theorems \ref{bu2} and \ref{bu5}.
\bt
\l{bw1}
Let $D\ge 1$. If $D=1$, let $\delta\le \delta_2(1)$. If $D\ge 2$, let $\delta\le \delta_3(D)$.
Let $u_\ve$ achieve $m_\ve$. If, up to a subsequence, $x^j_\ve\to a_j\in\overline\O$, $j=1,\ldots, D$, then 
\be
\l{bu7}
\frac{e_\ve(u_\ve)}{\ln (1/\ve)}\rightharpoonup C_\delta\sum_j\delta_{a_j}\ \ \text{$\ast$-weakly in ${\mathscr M}(\overline\O)$}.
\ee
\et

\noindent
(Recall that 
\bes
e_\ve(u)=\d\frac{K_1}2 (\div u)^2+\frac{K_3} 2(\curl u)^2+\frac 1{4\ve^2}(1-|u|^2)^2
\ees
is the energy density.)

\smallskip
A basic tool used in the proofs of the above results is the following substitute of  \eqref{bn7}.
\bl
\l{bu8bis} Let $1/2\le a<1$ and $C=C(a)<\infty$ be such that, for the corresponding en\-larged bad discs, we have  $|u|\ge a$ in 
 $\omega:=\O\setminus\cup_j B_{C\ve}(x^j_\ve)$. Let $w:=u/|u|$ in $\omega$. Then
\be
\l{bu9}
E_0(u, \omega)\ge  E_0(w, \omega)-C_1(\delta, \O,g) \frac 1{\ve^2}\int_\O (1-|u|^2)^2-C_2(\delta, a, \O, g).
\ee
\el
\bp[Proof of Lemma \ref{bu8bis}] If $z=(z_1,z_2)\in \R^2\sim\C$, we set $z^\perp:=(-z_2, z_1)$.

\smallskip
Let $\rho:=|u|$ so that $u=\rho w$ in $\omega$ and
\begin{gather}
\l{bv1}
(\div u)^2=(\rho\div w+\na \rho\cdot w)^2\ge \rho^2(\div w)^2+\na (\rho^2-1)\cdot ((\div w) w)
\\
\l{bv2}
(\curl u)^2=(\rho\curl w-\na \rho\cdot w^\perp)^2\ge \rho^2(\curl w)^2-\na (\rho^2-1)\cdot ((\curl w) w^\perp).
\end{gather}

We integrate \eqref{bv1} over $\omega$, using an integration by parts for the last term. We proceed similarly for \eqref{bv2}. Combining the two results, we find that
\be
\l{bv3}
\ba
E_0(u, \omega)\ge & E_0(w,\omega)-\frac{K_1}{2}\int_\omega (1-\rho^2)(\div w)^2-\frac{K_3}2\int_\O (1-\rho^2)(\curl w)^2
\\
&+\frac{K_1}2 \int_{\p\omega} (\rho^2-1)(\div w)w\cdot\nu-\frac{K_3}2 \int_{\p\omega} (\rho^2-1) (\curl w)w^\perp\cdot \nu\\
&-\frac {K_1}2\int_\omega (\rho^2-1) (\div w)^2-\frac{K_3}2\int_\omega (\rho^2-1) (\curl w)^2
\\
&-\frac {K_1}2\int_\omega (\rho^2-1) w\cdot \na (\div w)+\frac {K_3}2\int_\omega (\rho^2-1) w^\perp\cdot \na (\curl w).
\ea
\ee

Using: (i) Corollary \ref{aw9} ; (ii) the fact that $|u|\ge 1/2$ in $\overline\omega$; (iii) the fact that $\rho=1$ on $\p\O$, we find that  the second line in \eqref{bv3} is $\ge -C_3$, where $C_3=C_3(\delta, a, \deg g)$. Using, for the other integrals in \eqref{bv3}, the fact that $|u|\ge 1/2$ in $\omega$, we find that
\be
\l{bv4}
\ba
E_0(u,\omega)\ge & E_0(w, \omega)-C_3-C_4\int_\omega |1-|u|^2|\times (|\na u|^2+|D^2 u|)
\\
\ge &
E_0(w, \omega)-C_3-C_4\int_\O |1-|u|^2|\times (|\na u|^2+|D^2 u|),
\ea
\ee
where $C_4$ is a  universal constant.

\smallskip
It remains to dominate the last integral in \eqref{bv4}. Using: (i) Cauchy-Schwarz; (ii) formula \eqref{as2} (except for the final inequality in \eqref{as2}, which requires that $\O$ is strictly star-shaped),  we find that
\be
\l{bv5}
\int_\O |1-|u|^2|\times (|\na u|^2+ |D^2 u|)\le C_5 (1+\ve^{-2}\vertii{1-|u|^2}_2),
\ee
where $C_5=C_5(\delta, \O, g)$. We obtain \eqref{bu9} from \eqref{bv4} and \eqref{bv5}.
\ep

\bp[Proof of Theorem \ref{bu2}] {\it Proof  when $\O$ is strictly  star-shaped and $C=1$.} Let $\ve$ be sufficiently small and let  $B_{C_1\ve}(x^1_\ve)$ be the enlarged bad disc corresponding to $u=u_\ve$. By choosing if needed a larger (but fixed) $C_1$, we may assume that, $\O\subset B_{C_1/4}(0)$ and thus $\O\subset B_{C_1/2}(x^1_\ve)$. Extend $u$ to $\R^2$ as explained in the proof of Theorem \ref{bg1}  (after formula \eqref{bk1}). Assume, to simplify the forthcoming formulas, that $x^1_\ve=0$. Using: (i) estimate \eqref{as6}; (ii) the competitor $w:=u/|u|$  in the minimization problem  \eqref{bq3} in $A_{C_1\ve, C_1/2}$ (where $C$ defining the class ${\mathscr H}_{C_1\ve, C_2/2, C}$ is a sufficiently large fixed constant depending on $g$ {\it via} \eqref{aqb} and the extension $G$); (iii) the upper bound \eqref{as6}; (iv) Lemma \ref{bu8}; (v) \eqref{br1}; (vi) \eqref{br4}, we find that
\be
\l{bv6}
m_\ve\ge I_{\ve, 1/2, C}+O(1)\ge I_{\ve, 1, 1}+O(1).
\ee
On the other hand, by combining \eqref{bv6} with \eqref{bq5}, \eqref{br1}, and \eqref{br3}, we find that
\be
\l{bv7}
m_\ve\le I_{\ve, 1, 1}+O(1).
\ee

We complete the proof via \eqref{bv6}  and \eqref{bv7}.

\smallskip
\noindent
{\it Boundedness of the potential term in a general domain.} This follows from a principle devised by Del Pino and Felmer \cite{delpino_felmer}. Assume for simplicity that $0\in\O$. Let $v_\ve(x):=u_{\ve}(2x)$, $x\in\R^2$ (where $u_\ve$ has been extended to $\R^2$ as above). Let $B_R(0)$ be a large fixed ball containing $\overline \O$. By the first part of the proof, we have
\begin{gather}
\l{bv8}
E_\ve(u_\ve, B_{2R}(0))\le I_{\ve, 1, 1}+O(1),
\\
\l{bv9}
E_\ve(v_\ve, B_{R}(0))\ge I_{\ve, 1, 1}+O(1).
\end{gather}

Subtracting the inequalities \eqref{bv8} and \eqref{bv9} and noting that 
\bes
E_\ve(u_\ve, B_{2R}(0))-E_\ve(v_\ve, B_{R}(0))=\frac {3}{16\ve^2}\int_{B_R(0)}(1-|u_\ve|^2)^2=\frac {3}{16\ve^2}\int_{\O}(1-|u_\ve|^2)^2,
\ees
we find that
\be
\l{bv10}
\frac 1{\ve^2}\int_\O (1-|u_\ve|^2)^2\le C_{\color{red}6}(\delta, \O, g). 
\ee

\noindent
{\it Proof  in a general domain  when $C=1$.} We proceed as in the case of a strictly star-shaped domain, using \eqref{bv10} instead of \eqref{as6}.

\smallskip
\noindent
{\it Proof in a general domain for arbitrary $C$.} The inequality $m_\ve\le I_{\ve, 1, C}+O(1)$ is straightforward. On the other hand, we  have (arguing as in the first step and using \eqref{br4}) 
\bes
m_\ve \ge I_{\ve, 1/2, C}+O(1)\ge I_{\ve, 1, C}+O(1).\qedhere
\ees
\ep

\bp[Proof of Theorem \ref{bu8} when $\O$ is strictly-starshaped] We argue by contradiction. Then there exists some $\alpha>0$ such that, passing to a subsequence $\ve_\ell\to 0$ and relabeling the enlarged bad discs if necessary, we have
\begin{gather}
\l{bw2}
\lim_{\ve \to 0}\frac{\ln |x^1_\ve-x^2_\ve|}{\ln \ve}= \alpha,\\
\l{bw3}
\liminf_{\ve\to 0}\frac{\ln |x^i_\ve-x^j_\ve|}{\ln \ve}\ge \alpha, \ \fo i\neq j.
\end{gather}

Note that, by assumption, we have
\be
\l{bw4}
0<\alpha\le\alpha_0<1.
\ee

Possibly after passing to further subsequences, there exist a partition consisting of non-empty sets,
\bes
\{ 1,\ldots, D\}={\mathscr G}_1\sqcup\ldots {\mathscr G}_\ell
\ees 
(with, possibly, $\ell=1$), with each ${\mathscr G}_k$ non-empty,  and a number $0<\beta<\alpha$ such that 
\begin{gather}
\l{bw5}
1, 2 \in {\mathscr G}_1,
\\
\l{bw6}
[i, j\in {\mathscr G}_k, \, i\neq j]\implies \lim_{\ve\to 0}\frac{\ln |x^i_\ve-x^j_\ve|}{\ln \ve}= \alpha,
\\
\l{bw7}
[i\in {\mathscr G}_k, \, j\in {\mathscr G}_n,\, k\neq n]\implies \liminf_{\ve \to 0}\frac{\ln |x^i_\ve-x^j_\ve|}{\ln \ve}\ge \beta.
\end{gather}

Consider now constants $\beta<\alpha_1<\alpha_2<\alpha<\alpha_3<\alpha_4<1$ to be fixed later (in order to obtain a contradiction). We extend $u$ to $\R^2$ as explained earlier in this section. By \eqref{af4},  the upper bound in Lemma \ref{aq1}, and  a mean value argument, there exists a finite constant $C'$ depending on $D$ and on all the above constants such that: for small  $\ve$,  there exist radii $\ve^{\alpha_4}<R_1<\ve^{\alpha_3}<\ve^{\alpha_2}<R_2<\ve^{\alpha_1}$ satisfying
\be
\l{bw8}
R_j\int_{C_{R_j}(x^i_\ve)}|\na u|^2\le C',\  j=1, 2, \, \fo 1\le i\le D.
\ee

Note that (by definition of $\alpha$ and choice of $R_1$), 
\be
\l{bw9}
\text{for small $\ve$, $B_{R_1}(x^i_\ve)\cap B_{R_1}(x^j_\ve)=\emptyset$ if $i\neq j$.}
\ee

For simplicity, assume, only in this paragraph, that $x^i_\ve=0$. From \eqref{bw8} and the fact that $|u|\ge 1/2$ on $C_{R_j}(0)$, $j=1, 2$, we find that, in the annulus $A_{C_1\ve, R_1}$,  $w:=u/|u|$ is a competitor in the class $\tilde {\mathscr H}_{C_1\ve, R_1, C}$ (where $C_1$ is the constant in the definition of the enlarged bad discs and the constant $C$ depends  on $C'$ and on the constant $C_2$ in \eqref{aqb}). 

\smallskip
By the above, we find that
\be
\l{bw10}
E_0(w, B_{R_1}(x^i_\ve)\setminus B_{C_1\ve}(x^i_\ve))\ge  \tilde I_{C_1\ve, R_1, C}-C_3,\ \fo i,
\ee
with $C_3$ a finite constant depending only on the extension of $u$. (Same for the constants $C_4$,..., $C_6$ below.)

\smallskip
Set $D_k:=\# {\mathscr G}_k$, so that
\be
\l{bw11}
D_1\ge 2,\ \sum_k D_k=D,\  \sum_k (D_k)^2\ge D+2.
\ee 

Choose, for each $1\le k\le \ell$, an index $i_k\in {\mathscr G}_k$. Note that
\be
\l{bx1}
\text{for small $\ve$, $B_{R_2}(x^{i_k}_\ve)\cap B_{R_2}(x^{i_j}_\ve)=\emptyset$ if $k\neq j$.}
\ee

We are therefore in position to apply Lemma \ref{bh1} with:
\begin{gather*}
X:=\overline\O,\ U:=\{ x\in\R^2;\, \dist (x, \O)\le 1\},
\\
R:=R_2,\ B_k:=\overline B_{R_2}(x^{i_k}\ve),\ 1\le k\le \ell,
\\
h:=\frac 12|\na w|^2,
\\
\lambda_k:=(D_k)^2, \ 1\le k\le \ell.
\end{gather*}
(The fact that the assumption \eqref{bh2} is satisfied follows from \eqref{bx1} and \eqref{bn2}.) Using Lemma \ref{bh1}, we find that 
\be
\l{bx2}
G_0(w, \O\setminus \cup_k B_{R_2}(x^{i_k}_\ve))\ge \sum_k (D_k)^2 \ln \frac 1{R_2}-C_4.
\ee

Combining \eqref{bx2} with: (i) Lemma \ref{af1} applied in $\O\setminus \cup_k B_{R_2}(x^{i_k}_\ve)$; (ii)  the upper bounds \eqref{bw8} and \eqref{aqa}; (iii) the fact that $|u|\ge 1/2$ on the complement of the enlarged bad discs, we find that
\be
\l{bx3}
E_0(w, \O\setminus \cup_k B_{R_2}(x^{i_k}_\ve))\ge \sum_k (D_k)^2  (1-\delta_3)\ln \frac 1{R_2}-C_5.
\ee

Collecting \eqref{bw9}, \eqref{bw10}, and \eqref{bx3}, we find that
\be
\l{bx4}
E_0(w, \O)\ge D\tilde I_{C_1\ve, R_1, C}+
\sum_k (D_k)^2  (1-\delta_3)\ln \frac1 {R_2}-C_6.
\ee

Using: (i) \eqref{bx4}; (ii) Lemma \ref{bt0}; (iii) Lemma \ref{bu8}; (iv) the fact that $\O$ is strictly star-shaped (and thus \eqref{as6} holds); (v) the  inequalities satisfied by  $R_1$ and $R_2$; (vi) the last inequality in \eqref{bw11},  we find that
\be
\l{bx5}
\ba
E_0(u,\O)\ge &D C_\delta\ln \frac 1\ve+ [(D+2)(1-\delta_3)\alpha_1-DC_\delta\alpha_4]\ln\frac 1\ve \\&+o\left(\ln\frac 1\ve\right)\ \text{as }
\ve\to 0.
\ea
\ee

Since we have $C_\delta\le \pi$ (see Lemma \ref{br0}), and, by assumption,  $\delta_3<2/(D+2)$, we find that
\be
\l{bx6}
(D+2)(1-\delta_3)\alpha_1-DC_\delta\alpha_4>0\ \text{provided $\alpha_1$ and $\alpha_4$ are sufficiently close to $\alpha$.}
\ee

We obtain the desired contradiction {\it via}  \eqref{bx5},  \eqref{bx6}, and the upper bound \eqref{bq6}.
\ep

\bp[Proof of  Theorem \ref{bu5} when $\O$ is strictly star-shaped] Let $0<\alpha_1<\alpha<1$. In what follows, constants are finite and independent of small $\ve$. Consider, for small $\ve$,  a radius  $\ve^{\alpha}<R<\ve^{\alpha_1}$ and a finite constant $C'$ satisfying
\be
\l{bx7}
R\int_{C_{R}(x^i_\ve)}|\na u|^2\le C',\  \fo 1\le i\le D.
\ee

Arguing as for \eqref{bw10}, we have 
\be
\l{bx8}
E_0(w, B_R (x^i_\ve)\setminus B_{C_1\ve}(x^i_\ve))\ge \tilde I_{C_1\ve, R, C}-C_3.
\ee

From \eqref{bx8}, \eqref{bt6}, and the fact that $R>\ve^\alpha$, we find that
\be
\l{bx9}
E_0(w, \O)\ge  \alpha D C_\delta\ln \frac 1\ve+o\left(\ln \frac 1\ve\right).
\ee

The parameter $\alpha$ being arbitrary in $(0,1)$, we obtain from \eqref{bx9} that
\be
\l{bx10}
E_0(w, \O)\ge   D C_\delta\ln \frac 1\ve+o\left(\ln \frac 1\ve\right).
\ee

Since $\O$ is strictly star-shaped, \eqref{bx10}, the upper bound \eqref{as6}, and Lemma \ref{bu9} imply that
\be
\l{bx11}
E_0(u,\O)\ge  D C_\delta\ln \frac 1\ve+o\left(\ln \frac 1\ve\right).
\ee

 On the other hand,  \eqref{bq5} and \eqref{bt6} yield
 \be
 \l{bx12}
 E_\ve(u,\O)\le  D C_\delta\ln \frac 1\ve+o\left(\ln \frac 1\ve\right).
 \ee

 The conclusion then follows from \eqref{bx11} and \eqref{bx12}.  
\ep

\bp[Proof of Theorem \ref{bu8} in a general domain] Arguing as in the proof of \eqref{bv10} and using the upper bound \eqref{bx12} (valid in any domain) and the lower bound \eqref{bx11} (valid in a ball, by the preceding proof), we find that 
\be
\l{by1}
\frac 1{\ve^2}\int_\O (1-|u_\ve|^2)^2=o\left(\ln \frac 1\ve\right).
\ee

We next repeat the proof of Theorem \ref{bu8} in a strictly star-shaped domain. The only difference arises in the justification of \eqref{bx5}: when $\O$ is strictly star-shaped, we use the upper bound \eqref{as6}, while, for a general $\O$, we rely on \eqref{by1}.
\ep

\bp[Proof of Theorem \ref{bu5} in a general domain] As for the preceding proof, we repeat the proof in the strictly star-shaped case, except when it comes to justify \eqref{bx11}, for which we rely on \eqref{by1} instead of \eqref{as6}.
\ep

The proof of Theorem \ref{bw1} relies on the following straightforward variant of Lemma \ref{bu8}, whose proof is left to the reader.

\bl
\l{by2} 
Let $1/2\le a<1$ and $C=C(a)<\infty$ be such that, for the corresponding enlarged bad discs, we have  $|u|\ge a$ in 
 $\omega:=\O\setminus\cup_j B_{C\ve}(x^j_\ve)$. Let $C\ve<R<{\bf m}$.   Then
\be
\l{by3}
\ba
E_0(u, B_R(x^i_\ve)\setminus B_{C\ve}(x^i_\ve))\ge  &E_0(w, B_R(x^i_\ve)\setminus B_{C\ve}(x^i_\ve))
\\
&-C_1(\delta, \O,g) \frac 1{\ve^2}\int_\O (1-|u|^2)^2-C_2(\delta, a, \O, g)
\\
&-C_3(\delta, \O, g)\int_{C_R(x^i_\ve)}|\na u|.
\ea
\ee
\el

\bp[Proof of Theorem \ref{bw1}] Let $0<\alpha_1<\alpha<1$ and let $R$ be as in the proof of Theorem \ref{bu5}. By \eqref{bx7}, \eqref{bx8}, \eqref{bt6}, and \eqref{by3}, we find that 
\be
\l{by4}
E_\ve(u, B_{\ve^\alpha}(x^i_\ve)\setminus B_{C\ve}(x^i_\ve))\ge \alpha C_\delta \ln \frac 1\ve+o\left(\ln \frac 1\ve\right).
\ee

Therefore, for any fixed $r>0$ and for any $J\subset \{ 1, \ldots, D\}$, we have
\be
\l{by5}
E_\ve(u, \cup_{j\in J}B_r(x^i_\ve)\setminus B_{C\ve}(x^i_\ve))\ge \# J C_\delta \ln \frac 1\ve+o\left(\ln \frac 1\ve\right).
\ee

Combining \eqref{by5} with the upper bound \eqref{bx12}, we find that, for any fixed $r>0$,  
\be
\l{by6}
E_\ve (u, \O\setminus \cup_j B_r(a_j))=o\left(\ln \frac 1\ve\right).
\ee

From \eqref{by5} and \eqref{by6}, we obtain that there exist numbers $b_j\ge C_\delta$, $\fo j$, such that,  possibly up to a subsequence,
\be
\l{by7}
\frac{e_\ve(u_\ve)}{\ln (1/\ve)}\rightharpoonup \sum_j b_j\delta_{a_j}\ \ \text{$\ast$-weakly in ${\mathscr M}(\overline\O)$}.
\ee

The fact that $b_j\le C_\delta$, and thus \eqref{bu7}  holds for the full sequence,  is a consequence of \eqref{by7} and \eqref{bx12}.
\ep

\section{Arbitrary \texorpdfstring{$\delta$}{delta} analysis.  Asymptotic expansion of the energy}
\l{ca1}

Throughout this section, we consider minimizers $u=u_\ve$ of $E_\ve$ in $H^1_g(\O; \C)$, with boundary datum of  degree $-D<0$.  
A first main goal is to generalize the formula \eqref{bu6} to any $\delta$ (without any smallness assumption on $\delta$). We will also obtain variants of Theorems \ref{bu8} and \ref{bw1}, under weaker assumption on $\delta$. However, the results below are not necessarily,  strictly speaking, generalizations of the results in the previous section: while they hold either for any $\delta$ or under {\it explicit} smallness conditions on $\delta$,  the picture we get is \enquote{blurred}, in the sense that it involves, instead of enlarged bad discs (as up to now), giant bad discs (that we define below), whose radii can be much bigger than $\ve$.  
(Recall that, in the previous section, we assume that $\delta\le \delta_2=\delta_2(D)$, and the existence of $\delta_2$ is established {\it via} a proof by contradiction. Therefore, the smallness conditions on $\delta$ in the previous section are {\it not explicit}.)  

\smallskip
In order to state the main results of this section, we introduce new notation and several definitions. 

\smallskip
Fix $\delta$. 
Consider the enlarged bad discs constructed in Lemma \ref{at1}. Possibly after passing to a subsequence, we may assume that the number  $M$ of bad discs is independent of $\ve$, and that all the limits
\bes
\lim_{\ve\to 0}\frac{\ln |x^i_\ve-x^j_\ve|}{\ln \ve}:=L_{ij}, \ i\neq j,
\ees
exist. Note that we have $0\le L_{ij}\le 1$, $\fo i\neq j$. There exists a partition consisting of non-empty sets, 
\bes
\{ 1,\ldots, D\}={\mathscr G}^0_1\sqcup\ldots {\mathscr G}^0_{\ell_0}
\ees 
(with, possibly $\ell_0=1$) such that 
\begin{gather}
\l{ca2}
[i, j\in {\mathscr G}^0_k, \, i\neq j]\iff L_{ij}=0,
\\
\l{ca3}
[i\in {\mathscr G}^0_k, \, j\in {\mathscr G}^0_n,\, k\neq n]\iff L_{ij}>0.
\end{gather}

If ${\mathscr G}_k^0$ consists of a single index $i$, the corresponding giant bad disc is simply the enlarged bad disc $B_{C\ve}(x^i_\ve)$. Otherwise, we choose (arbitrarily) $i\in {\mathscr G}_k^0$. We  set 
\bes
R_k=R_k(\ve):=2\min\{ |x^i_\ve-x^j_\ve|;\, j\in {\mathscr G}_k^0,\, j\neq i\}
\ees
and define the giant bad disc associated with ${\mathscr G}^0_k$ as $B_{R_k}(x^i_\ve)$. Note that, while there is some ambiguity in this definition (since it depends on the choice of $i$), the giant bad discs have two common features: (i) for small $\ve$, if $j\in {\mathscr G}^0_k$, then $B_{C\ve}(x^j_\ve)\subset B_{R_k}(x^i_\ve)$; (ii) for small $\ve$, two giant bad discs corresponding to two different ${\mathscr G}^0_k$ are disjoint.

\smallskip
We extend $u$ to $\R^2\setminus \O$ as explained in the previous sections and define the degree $D^0_k$ of the giant bad disc associated with ${\mathscr G}^0_k$ through the formula
\bes
D^0_k:=\deg (u/|u|, C_{R_k}(x^i_\ve)).
\ees

It is straightforward the definition does not depend on the choice of the extension of $u$. 

\smallskip
To give a flavor of the results in this section and how they do compare with the results in the previous sections, we start with a special case of  more general assertions below.

\bt
\l{ca4}
Assume that $\delta\le 2/(D+2)$. Let $0<\alpha<1$. Then, for small $\ve$ (small\-ness depending on $\alpha$), we have
\begin{gather}
\l{ca5}
D^0_k=-1, \ \fo k,\ \text{and thus $\ell_0=D$},
\\
\l{ca6}
m_\ve= D C_\delta \ln\frac 1\ve+o\left(\ln \frac 1\ve\right)\ \text{as }\ve \to 0,
\\
\l{ca7} 
\text{If $x^i_\ve$, $x^j_\ve$ are the centers of two different giant bad discs, then } |x^i_\ve-x^j_\ve|\ge \ve^\alpha,
\\
\l{ca8}
\ba
&\text{If, up to a subsequence, the centers of the giant bad discs satisfy $x^j_\ve\to a_j\in\overline\O$,}
\\
 &\text{$j=1,\ldots, D$, then }
\frac{e_\ve(u_\ve)}{\ln (1/\ve)}\rightharpoonup C_\delta\sum_j\delta_{a_j}\ \ \text{$\ast$-weakly in ${\mathscr M}(\overline\O)$}.
\ea
\end{gather}
\et

This is to be compared respectively with Theorems \ref{bg1}, \ref{bu5}, \ref{bu8}, and \ref{bw1}.

\smallskip
We next introduce a quantity that will play the role of $D C_\delta$ in the general case (i.e., without any size assumption on $\delta$). To start with, let $d\in\Z$ be an integer. Associate with $d$ the classes ${\mathscr H}_{R_1, R_2, C}^d$ and  $\tilde {\mathscr H}_{R_1, R_2, C}^d$, by replacing, in the definition of ${\mathscr H}_{R_1, R_2, C}$ and  $\tilde {\mathscr H}_{R_1, R_2, C}$ (see \eqref{bq2} and \eqref{bt1}),  the condition $\deg (v, C_{R_1}(0))=\deg (v, C_{R_1}(0))=-1$, with the condition $\deg (v, C_{R_1}(0))=\deg (v, C_{R_1}(0))=d$. (In order to have non-empty classes, one has to suppose that $C\ge |d|$.) Consider the corresponding minima $I_{R_1, R_2, C}^d$ and $\tilde I_{R_1, R_2, C}^d$. 
The analysis in Section \ref{bq1} (which corresponds to the special case $d=-1$) can be readily extended to the case of an arbitrary degree condition and yields full analogues of the results in Section \ref{bq1}. We quote e.g., without proof, straightforward 
generalizations of some of the results there.
\begin{gather}
\l{ca9}
d^2 (1-\delta)\pi \ln \frac{R_2}{R_1}-|d|(1-\delta)\pi\le \tilde I_{R_1, R_2, C}\le d^2\pi \ln \frac{R_2}{R_1},
\\
\l{ca10}
\begin{split}
\text{There exists some $d^2 (1-\delta)\pi \le C_\delta^d\le d^2\pi$ such that }\lim_{t\to\infty}\frac{\tilde I_{R_1, tR_1, C}}{\ln t}=C_\delta^d,\\
 \fo R_1>0,\, \fo C\ge |d|.
\end{split}
\end{gather}

Consider now the quantity
\be
\l{cb1}
K(\delta, -D):=\inf\left\{ \sum_{j=1}^M C_\delta^{d_j};\, M\ge 1, d_j\in\Z, \sum_j d_j=-D  \right\}.
\ee

We will see later that it suffices to consider, in \eqref{cb1},  only $M$ and degrees $d_j$ satisfying {\it a priori} bounds depending only on $\delta$ and $D$, and thus, in \eqref{cb1}, 
$\inf$ is actually a  $\min$. We will also see that, under the assumption $\delta\le 2/(D+2)$, we have $K(\delta, -D)=D C_\delta$.

\smallskip
A main result in this section is the following.
\bt
\l{cb2}
We have
\be
\l{cb3}
m_\ve=K(\delta, -D)\ln \frac 1\ve+o\left(\ln \frac 1\ve\right)\ \text{as }\ve \to 0.
\ee
\et

We now proceed to the proofs and establish, on the way, some auxiliary results of independent interest. Since the techniques and arguments used in this section are essentially variants of the ones presented in Sections \ref{bg0}, \ref{bm1}, and \ref{bu1}, the proofs will be rather sketchy and send to similar proofs in  these sections.

\smallskip
We start with a straightforward result.

\bl
\l{cd4}
The infimum in \eqref{cb1} is achieved, and every minimal configuration  $(d_j)_{1\le j\le M}$ such that $d_j\neq 0$, $\fo j$, satisfies $M\le C_1(\delta, D)$, $|d_j|\le C_2(\delta, D)$. 
\el
\bp
This follows from the lower bound $d^2(1-\delta)\pi\le C^d_\delta$ (see \eqref{ca10}).
\ep

\bl
\l{cd1}
For small $\ve$, a giant bad disc has a non-zero degree.
\el
\bp[Sketch of proof]
Proof by contradiction. Suppose that, possibly after a subsequence and relabeling the giant bad discs, we have $D_1^0=0$ and $x^1_\ve\in {\mathscr G}_1^0$. Let $0<\alpha<1$ be such that, for sufficiently small $\ve$,
\be
\l{cd2}
[i\in {\mathscr G}_k^0,\, j\in {\mathscr G}_n^0,\, k\neq n]\implies |x^i_\ve-x^j_\ve|\ge \ve^\alpha.
\ee

Let $\alpha<\beta<1$. Using \eqref{cd2} and the assumption $D_1^0=0$, we are in position to repeat the arguments leading to \eqref{ba4} in the proof of Lemma \ref{ba2}, and find that
\be
\l{cd3}
E_\ve(u, B_{\ve^\beta}(x^1_\ve))\le C_1 \ \text{and}\ G_\ve(B_{\ve^\beta}(x^1_\ve))\le C_1,
\ee
for some finite constant $C_1=C_1(\alpha, \beta, \deg g)$. For small $\ve$, estimate \eqref{cd3}, the fact that $|u(x^1_\ve)|\le 1/2$, and the $\eta$-ellipticity Lemma \ref{ab8} yield a contradiction.
\ep

\bp[Proof of Theorem \ref{cb2}] An argument similar to the one leading to \eqref{bq5} yields  the upper bound
\be
\l{cd5}
m_\ve \le K(\delta, -D)\ln \frac 1\ve+C(\O, g).
\ee

The heart of the proof consists of establishing the lower bound
\be
\l{cd7}
m_\ve \ge K(\delta, -D)\ln \frac 1\ve+o\left(\ln \frac 1\ve\right)\ \text{as }\ve\to 0.
\ee

\noindent
{\it Construction of nested groups of bad discs.} Define $L_{ii}:=1$. Possibly after passing to a further sub\-se\-quence in $\ve$, we may assume that there exist numbers $0\le \alpha_p<\cdots<\alpha_1<\alpha_0= 1$ (with pos\-sibly $p=0$) such that
\be
\l{cd6}
\{ L_{ij}; 1\le i, j\le M\}=\{ \alpha_0,\ldots, \alpha_p\}.
\ee

For $0\le q\le p$, we define the equivalence relation
\bes
i\sim_q j\iff L_{ij}\ge \alpha_q.
\ees

This equivalence relation defines a partition 
\bes
\{ 1,\ldots, D\}={\mathscr G}_1^q\sqcup\ldots\sqcup {\mathscr G}_{\ell_q}^q;
\ees
for $q=0$,  we recover the partition defined at the beginning at this section, and the corresponding equivalence classes define the giant bad discs. Note that these equivalence classes are nested, in the sense that, if $i\sim_q j$ (and thus $i$ and $j$ are in the same equivalence class at the $q$ level), then $i\sim_r j$, $\fo r>q$ (and thus $i$ and $j$ are in the same equivalence class at any higher level). 

\smallskip
\noindent
{\it Proof of \eqref{cd7} when $\O$ is strictly star-shaped}. If $\alpha_p>0$, define $\alpha_{p+1}:=0$; otherwise we do not define $\alpha_{p+1}$. We extend $u$ to $\R^2$ as in the previous sections. For $0\le q\le p-1$ (if $\alpha_p=0$), respectively $0\le q\le p$ (if $\alpha_p>0$), let 
\bes
\alpha_{q+1}<\beta_q'<\beta_q''<\gamma_q'<\gamma_q''<\alpha_q
\ees
be (arbitrary, but fixed at this stage) constants. Consider a radius $R$ such that 
\be
\l{cd8}
\ve^{\gamma_q''}<R<\ve^{\gamma_q'}\text{ or }\ve^{\beta_q''}<R<\ve^{\beta_q'}.
\ee

 Then, for small $\ve$ (smallness depending only on the above constants, not on $R$), 
 \be
 \l{cd9}
 \text{If $i\not\sim_q j$, then $B_R(x^i_\ve)\cap B_R(x^j_\ve)=\emptyset$}.
 \ee
 
Consider now, for each $q$ and $k$, some $i=i(k,q)$ such that $i\in {\mathscr G}_k^q$.   We define the \enquote{ degree of the class ${\mathscr G}_k^q$} as 
 \bes
 D_k^q:=\deg (u/|u|, C_R(x^i_\ve));
\ees
the definition does not depend on the choice of $i$ or of $R$ satisfying \eqref{cd8}, and is independent of the extension of $u$ to $\R^2\setminus \O$. When $q=0$, we recover the definition of the degree of a giant bad disc.
Moreover, by \eqref{cd9} we have
\be
\l{cd10}
\sum_{k} D_k^q=-D,\ \fo q.
\ee

We next choose, using a mean value argument, radii
\be
\l{ce1}
\ve^{\gamma_q''}<\rho^q<\ve^{\gamma_q'}<\ve^{\beta_q''}<R^q<\ve^{\beta_q'}
\ee
such that
\be
\l{ce2}
\rho^q\int_{C_{\rho^q}(x^{i(q,k)}_\ve)}|\na u|^2\le C(D) \text{ and } R^q\int_{C_{R^q}(x^{i(q,k)}_\ve)}|\na u|^2\le C(D),\ \fo q,\, \fo k.
\ee

Let $w:=u/|u|$, well-defined outside the enlarged bad discs. By \eqref{cd9}, if $\rho^q\le R\le R^q$ and $\ve$ is small, then $\deg (w, C_R(x^{i(q,k)}_\ve))=D_k^q$. Combining this fact with \eqref{ce1}, \eqref{ce2}, and \eqref{ca10}, we obtain
\be
\l{ce3}
E_0(w, B_{R^q}(x^{i(q,k)}_\ve)\setminus B_{\rho^q}(x^{i(q,k)}_\ve))\ge  (\gamma_q'-\beta_q'') C_\delta^{D_k^q} \ln \frac 1\ve+o\left(\ln\frac 1\ve\right)\ \text{as }\ve \to 0.
\ee

Summing \eqref{ce3} over $q$ and $k$, and using \eqref{cd9}, \eqref{cd10}, and \eqref{cb1}, and the fact that $u$ is fixed and smooth in $\R^2\setminus\O$, we obtain
\be
\l{ce4}
\ba
E_0(w, \O)\ge &\sum_q (\gamma_q'-\beta_q'') \sum_k  C_\delta^{D_k^q} \ln \frac 1\ve+o\left(\ln\frac 1\ve\right)
\\
\ge & K(\delta, -D)\sum_q (\gamma_q'-\beta_q'')  \ln \frac 1\ve+o\left(\ln\frac 1\ve\right)\ \text{as }\ve\to 0.
\ea
\ee

Letting, in \eqref{ce4}, $\gamma_q'\to \alpha_q$ and $\beta_q''\to \alpha_{q+1}$, and using the fact that $\sum_q (\alpha_q-\alpha_{q+1})=1$,  we find that 
\be
\l{ce5}
E_0(w, \O)\ge  K(\delta, -D)  \ln \frac 1\ve+o\left(\ln\frac 1\ve\right)\ \text{as }\ve\to 0.
\ee

We obtain \eqref{cd7} from \eqref{ce5}, \eqref{as6}, and Lemma \ref{bu8bis}.

\smallskip
\noindent
{\it Proof of \eqref{cd7} in a general domain.} As in the proof of Theorem \ref{bu8}, we rely on the previous step to derive first \eqref{by1}, then \eqref{cd7} in a general domain.
\ep

An inspection of the proof of \eqref{ce4} leads to the following
\bc
\l{ce6}
With the above notation, we have
\be
\l{ce7}
\sum_k C_\delta^{D_k^q}=K(\delta, -D),\ \fo q.
\ee
\ec

\bp[Sketch of proof of Theorem \ref{ca4}] By \eqref{ce7}, we have
\be
\l{ce8}
\sum_k C_\delta^{D_k^0}=K(\delta, -D).
\ee

On the other hand, if $\delta< 2/(D+2)$,  then,  by the first part of \eqref{ca10}, when $d\neq 0$ we have 
\be
\l{cvb1}
C_\delta^d\ge d^2(1-\delta)\pi>\frac D{D+2}d^2\pi.
\ee

Using \eqref{cvb1}, 
we find that
\be
\l{ce9}
\ba
{}[D_k\neq 0,\, \fo k, \ \sum_k D_k=-D]\implies &\text{either }D_k=-1,\, \fo k,
\\
&\text{or }\sum_k (D_k)^2\ge D+2
\text{ and }\sum_k C_\delta^{D_k}>\pi D. 
\ea
\ee

Since, on the other hand, we have (using \eqref{br5})
\be
\l{ce10}
K(\delta, -D)\le D C_\delta\le \pi D,
\ee
we find, from \eqref{ce8}--\eqref{ce10} and Lemma \ref{cd1}, that each giant bad disc is of degree $-1$ (i.e., \eqref{ca5} holds) and that, moreover,
\be
\l{cf1}
K(\delta, -D)=D C_\delta.
\ee

Combining \eqref{cf1} with \eqref{cb3}, we obtain \eqref{ca6} when $\delta<2/(D+2)$. 

\smallskip
When $\delta=2/(D+2)$, we argue similarly (using \eqref{br6} instead of \eqref{br5}), and find that \eqref{ca5} and \eqref{ca6}  still hold in this case.

\smallskip
 On the other hand, by construction, the giant bad discs satisfy the assumption \eqref{bu99}. (With the notation in the proof of Theorem \ref{cb2}, the role of $\alpha_0$ in \eqref{bu99} can be played by any constant $\beta$ with $\alpha_1<\beta<1$.) We are in position to repeat the proof of Theorem \ref{bu8} and obtain, for the centers of giant bad discs and $\ve$ small, the validity of  \eqref{ca7}, which is the analogue of \eqref{bu10}. Combining this with \eqref{cf1}, we are in position to repeat the proof of Theorem  \ref{bw1}, and obtain the validity of \eqref{ca8}.
\ep

 \bibliographystyle{plain}
 \bibliography{bibc}
 
\end{document}